\documentclass[12pt]{article}

\usepackage[margin=1in]{geometry}
\usepackage[utf8]{inputenc}

\makeatletter
\renewcommand\thefootnote{\textsuperscript{\@fnsymbol\c@footnote}}
\let\old@thanks\thanks 
\DeclareRobustCommand\thanks[2][]{
  \AddToHook{begindocument/end}{
    \if\relax#1\relax%
      \footnotemark%
    \else%
      \protect\refstepcounter{footnote}\protect\label{#1}%
    \fi%
    \protected@xdef\@thanks{%
      \@thanks\protect\footnotetext[\the\c@footnote]{#2}%
    }%
  }%
}
\AddToHook{begindocument/begin}{\let\thanks\old@thanks} 
\let\old@maketitle\maketitle
\def\maketitle{\old@maketitle\def\thefootnote{\@arabic\c@footnote}}
\makeatother

\makeatletter
\def\@fnsymbol#1{\ensuremath{\ifcase#1\or 1 \or 2 \or
\mathsection\or \mathparagraph\or \|\or **\or \dagger\dagger
\or \ddagger\ddagger \else\@ctrerr\fi}}
\makeatother

\author{Sadashige Ishida\ref{a} and  Hugo Lavenant\ref{b}}
\thanks[a]{Email: sadashige.ishida@ist.ac.at \\ \hspace*{1.5em} Institute of Science and Technology Austria, Klosterneuburg, Austria}

\thanks[b]{Corresponding author\\ \hspace*{1.5em} 
Email: hugo.lavenant@unibocconi.it \\ \hspace*{1.5em} Department of Decision Sciences and BIDSA, Bocconi University, Milan, Italy}

\usepackage{amssymb,amsmath,amsfonts,amsthm,moresize,xfrac,nicefrac,accents}
\usepackage{cancel}
\usepackage{mathtools}
\usepackage{MnSymbol}
\usepackage{dsfont}
\usepackage{mathrsfs}
\usepackage{xcolor}

\usepackage{hyperref}
\hypersetup{
    colorlinks=true, 
    linkcolor=black,
    filecolor=magenta,      
    urlcolor=black,
    citecolor=blue,
}


\def\ie{\emph{i.e. }}

\let\on=\operatorname

\def\RR{\mathbb{R}}

\def\ZZ{\mathbb{Z}}

\def\cB{\mathcal{B}}
\def\cC{\mathcal{C}}

\def\cG{\mathcal{G}}

\def\cK{\mathcal{K}}
\def\cL{\mathcal{L}}
\def\cM{\mathcal{M}}

\def\cP{\mathcal{P}}

\def\cS{\mathcal{S}}

\def\indicator{I}
\def\characteristic{\mathds{1}}


\newcommand{\ddr}{\mathrm{d}}

\numberwithin{equation}{section}

\theoremstyle{plain} 
\newtheorem{theorem}{Theorem}[section]
\newtheorem{lemma}[theorem]{Lemma}
\newtheorem{proposition}[theorem]{Proposition}
\newtheorem{corollary}[theorem]{Corollary}

\theoremstyle{definition}
\newtheorem{definition}[theorem]{Definition}

\newtheorem*{assumptions}{Assumptions}

\theoremstyle{remark}
\newtheorem{remark}[theorem]{Remark}

\usepackage{tikz}
\usepackage{pgfplots}
\pgfplotsset{compat=1.13}

\newcommand{\COMMENT}[3]{}

\definecolor{revcolor}{rgb}{0.0, 0.5, 1.0}

\date{\today}

\begin{document}

\title{Quantitative convergence of a discretization of dynamic optimal transport using the dual formulation}

\maketitle

\begin{abstract}
We present a discretization of the dynamic optimal transport problem for which we can obtain the convergence rate for the value of the transport cost to its continuous value when the temporal and spatial stepsize vanish. This convergence result does not require any regularity assumption on the measures, though experiments suggest that the rate is not sharp. Via an analysis of the duality gap we also obtain the convergence rates for the gradient of the optimal potentials and the velocity field under mild regularity assumptions. To obtain such rates, we discretize the dual formulation of the dynamic optimal transport problem and use the mature literature related to the error due to discretizing the Hamilton-Jacobi equation. 

\medskip

\noindent \emph{Keywords}. Optimal transport; Hamilton-Jacobi equation; convex optimization.

\medskip

\noindent \emph{2020 Mathematics Subject Classification}. Primary 49Q22; Secondary 65K10, 49L12.
\end{abstract}

\section{Introduction}

\paragraph{The dynamic optimal transport problem and its discretization}

In this work, we are interested in the dynamic optimal transport problem. Given two probability measures $\mu, \nu$ over a spatial domain $\Omega$, it reads 
\begin{equation}
\label{eq:intro_problem}
\inf_{(\rho_t, v_t)} \int_0^1 \int_\Omega L(v_t) \, \ddr \rho_t \, \ddr t,
\end{equation}
where $L : \RR^d \to \RR$ is a given convex function and the infimum is taken over all pairs $(\rho_t, v_t)$ of a time-dependent probability distribution and velocity field which are solutions of
\begin{equation}
\label{eq:intro_constraint}
\begin{cases}
\partial_t \rho_t + \nabla \cdot (\rho_t v_t) = 0, \\
\rho_0 = \mu, \quad \rho_1 = \nu,
\end{cases}
\end{equation}
that is of the continuity equation with temporal boundary conditions $\mu$ and $\nu$. Here $\rho_t, v_t$ are indexed by a temporal variable $t \in [0,1]$ and $\nabla \cdot$ stands for the divergence with respect to the spatial variable. The interpretation is that $(\rho_t)$ must join $\mu$ and $\nu$ while being transported by the flow of $(v_t)$, at a minimal cost. Originally introduced by Benamou and Brenier in \cite{Benamou2000ACF} for numerical purposes as it is linked to the (static) optimal transport problem with cost $c(x,y) = L(y-x)$, this formulation turned out to be very fruitful. From a theoretical point of view, it is a robust formulation which enables to extend and generalize the optimal transport problem: it is used for optimal transport on graphs \cite{maas2011gradient}, for unbalanced optimal transport \cite{liero2018optimal, chizat2018unbalanced, kondratyev2016new}, for optimal transport of matrix-valued measures~\cite{brenier2020optimal} to cite a few extensions. From the numerical point of view, in addition to being one of the first methods proposed to solve the optimal transport problem in dimension more than one, it can be adapted to a great variety of related problems: Wasserstein gradient flow \cite{benamou2016augmented, carrillo2022primal}, mean field games \cite{benamou2015augmentedMFG, benamou2017variational}, and trajectory inference \cite{schmitzer2019dynamic} to mention a few. 

The usual road to solve~\eqref{eq:intro_problem} with the constraints~\eqref{eq:intro_constraint} is to first rewrite it as a convex problem by using the momentum $m_t = \rho_t v_t$ as an unknown rather than $v_t$. In this case both $m_t$ and $\rho_t$ are measures, and $v_t$ is recovered as the Radon-Nikodym derivative $\ddr m_t / \ddr \rho_t$  of $m_t$ with respect to $\rho_t$. This leads to
\begin{equation}
\label{eq:intro_problem_convex}
\inf_{(\rho_t, m_t)} \int_0^1 \int_\Omega L \left( \frac{\ddr m_t}{\ddr \rho_t} \right) \, \ddr \rho_t \, \ddr t, \quad \text{with constraints} \quad  
\begin{cases}
\partial_t \rho_t + \nabla \cdot m_t = 0, \\
\rho_0 = \mu, \quad \rho_1 = \nu.
\end{cases}
\end{equation}
The constraint $\partial_t \rho_t + \nabla \cdot m_t = 0$ is now linear, and the functional to be minimized is convex as $(x,y) \mapsto L(x/y) y$ is a convex function both in $x$ and $y$ when extended to $+ \infty$ for $y \leqslant 0$, except at $(0,0)$ where it is $0$. Then one proposes a finite dimensional version of the problem~\eqref{eq:intro_problem_convex} where both the time and space variables are discretized, and solves the resulting finite dimensional convex problem with standard methods in non-smooth convex optimization. We refer to \cite{papadakis2014optimal,Lavenant2018,carrillo2022primal,NataleTodeschi2021finitevolume,natale2022mixed} for instantiations of this approach with plain optimal transport. This comes with two challenges from the viewpoint of numerical analysis:
\begin{enumerate}
\item Guarantee (quantitatively) the convergence of the convex optimization solver used to solve the discretized problem. 
\item Guarantee (quantitatively) the convergence of the value and solutions to the discretized problem to the original one~\eqref{eq:intro_problem_convex} when the temporal and spatial stepsizes vanish.  
\end{enumerate}
The first point is a question of convex optimization which is quite well understood and that we will not address referring to \cite{papadakis2014optimal, hug2020convergence}. \emph{Our main concern is rather the second question}. The difficulty lies in the roughness of the functional to optimize: $(x,y) \mapsto L(x/y) y$ is only lower semi-continuous and discontinuous at $(0,0)$. Moreover a priori the data $\mu, \nu$ could be allowed to be arbitrary probability distributions (even Dirac masses), and there is no regularizing effect, so that $\rho_t$ could be a Dirac mass for every $t \in [0,1]$. Thus one really has to face the discontinuity of the functional to optimize. Nevertheless, numerical examples in the aforementioned works suggest that convergence holds even when the measures $\mu, \nu$ are quite rough.   

In \cite{lavenant2021unconditional} building on the previous works \cite{carrillo2022primal,gladbach2020scaling,erbar2020computation}, the second author gave an answer to the second question by providing sufficient conditions for any discretization of the dynamic optimal transport problem to indeed converge to the original problem when the temporal and spatial stepsizes vanish. This framework was later used in \cite{natale2022mixed} and extended to matrix-valued optimal transport in \cite{li2020general,li2023convergence}. However the arguments were inspired by the theory of $\Gamma$-convergence and were \emph{not} quantitative, that is, they did not come with a convergence rate.

The goal of this article is to provide \emph{quantitative} convergence rates of a discretized version of~\eqref{eq:intro_problem_convex} to the original problem, as the temporal and spatial stepsize vanish. There are few results already available: the works \cite{carrillo2022primal} and \cite{NataleTodeschi2021finitevolume} show that their respective methods reach first order of convergence. Specifically, for $h$ the (common) temporal and spatial stepsize, the error in the value of problem~\eqref{eq:intro_problem_convex}, that is, the transport cost, is of order $h$. The article \cite{NataleTodeschi2021finitevolume}  also extends this convergence to the solutions of the problem via an analysis of the dual gap. However, in both cases this rate is only available when the data $\mu, \nu$ have smooth densities which are bounded from below, and moreover the solution $\rho$ to the optimal transport problem needs to have a smooth density bounded from below. The latter assumption (positivity of the solution $\rho$ everywhere) is very strong, not directly implied by $\mu, \nu$ smooth and bounded from below \cite{santambrogio2016convexity}. Moreover, as we said above, in practice the discretizations behave well even when $\mu, \nu$ or $\rho$ are not bounded from below.

\begin{center}
\fbox{\begin{minipage}{0.9\textwidth}
In this work we propose a new discretization of the dynamic optimal transport problem (see Section~\ref{sec:discrete_OT}) for which we are able to quantify the convergence, at least of the value of the transport cost. With $h$ the (common) temporal and spatial stepsize the error is of order $\sqrt{h}$ but our result does not require \emph{any} assumption on the probability distributions $\mu, \nu$, besides that they have bounded support (see Theorem~\ref{th:convergence_of_cost}). 
\end{minipage}}
\end{center}

\noindent Let us already emphasize that this result comes with two important limitations. First we are not able to show that the rate improves if $\mu, \nu$ have a smooth density, and the numerical experiments we conducted suggest that our rate is not sharp. To prove better convergence rates for more regular inputs, we would need a refinement of the previous result \cite{CrandallLions_discreteHJ}, which we largely rely on (see Remark \ref{rmk:CL_determines_rate}).

The second point is about the efficiency of numerical computation. Our discretization is restricted to periodic boundary conditions although we can treat the transport between \(\mu,\nu\) on a bounded domain without any theoretical limitations by taking the entire periodic domain large enough. A full extension to bounded domains for more numerical efficiency may be potentially done, but is out of the scope of the present paper (see Remark~\ref{rmk:optimal_transport_domain}).

\paragraph{Intuition of our proposed discretization}

Our strategy to get such rates is the following. Rather than solving~\eqref{eq:intro_problem_convex}, we look at the dual problem which is known to be 
\begin{equation}
\label{eq:intro_problem_dual}
\sup_{\varphi} \int_\Omega \varphi(1,\cdot) \, \ddr \nu - \int_\Omega \varphi(0, \cdot) \, \ddr \mu \quad \text{with constraint} \quad   \partial_t \varphi + H(\nabla \varphi) \leqslant 0,
\end{equation}
where $\varphi = \varphi(t,x)$ for $t \in [0,1]$ and $x \in \Omega$ is a time-space dependent scalar function which is subsolution of the \emph{Hamilton-Jacobi} equation $\partial_t \varphi + H(\nabla \varphi) = 0$, being $H$ the Legendre transform of $L$.  

Discretizations of the Hamilton-Jacobi equation in a finite difference fashion have been studied in the context of viscosity solutions starting with the seminal work of Crandall and Lions \cite{CrandallLions_discreteHJ} and the rate of convergence is proved $\sqrt{h}$ under fairly general assumptions. This holds specifically for the initial value problem
\begin{equation*}
\begin{cases}
\partial_t \varphi + H(\nabla \varphi) = 0, \\
\varphi(0,\cdot) \text{ given},  
\end{cases}
\end{equation*}
for the Hamilton-Jacobi equation, importantly under an a priori Lipschitz bound on $\varphi(0, \cdot)$. In the context of optimal transport, an optimal solution $\varphi$ to the dual problem~\eqref{eq:intro_problem_dual} is actually known to be Lipschitz in space, with Lipschitz constant depending on the diameter of $\Omega$ but independent of $\mu, \nu$ (see Proposition~\ref{prop:bound_of_optimal_potential}). In our analysis, we also need to ensure that an optimal solution to the discretized version of~\eqref{eq:intro_problem_dual} is also Lipschitz. As it cannot be guaranteed a priori, we add Lipschitz continuity at $t=0$ as an additional constraint in our discretized problem; see Definition~\ref{def:discrete_dynamic_optimal_transport} and in particular the constraint~\eqref{eq:discrete_initial_Lipschitz}. 

\begin{center}
\fbox{\begin{minipage}{0.9\textwidth}
In short we propose a discretization of the dual problem~\eqref{eq:intro_problem_dual} where we use a finite difference discretization of the Hamilton-Jacobi equation for which convergence rates are already studied, adding a Lipschitz constraint to $\varphi(0,\cdot)$ in the discrete dual problem.      
\end{minipage}}
\end{center}

\paragraph{A brief comment on rates for other numerical methods}   

Dynamic optimal transport is not the only way to solve the optimal transport problem, we refer to \cite{peyreComputationalOT} and references therein for a comprehensive introduction. When faced with the linear programming formulation and its entropic regularization, the standard setting is to assume that measures are approximated via i.i.d. samples and rates should be understood in a statistical setting (that is, written in probability or in expectation as a function of the sample size) rather than a numerical analysis one (that is, written as a deterministic function of the stepsizes). We refer to \cite{weed2019sharp} and \cite{mena2019statistical} as well as references therein for results in this direction. In semi-discrete optimal transport, that is, when only one of the two measures is discrete, rates have been investigated both in a statistical setting \cite{altschuler2022asymptotics}, but also in a more standard setting of numerical analysis, related to the stability of the Monge-Ampère equation \cite{berman2021convergenceRates}. As far as PDEs methods are concerned, in addition to dynamic optimal transport, one could also solve the Monge-Ampère equation to compute a transport map \cite{benamou2014numerical}. The convergence when the resolution of discretization is reduced has been established in the context of viscosity solutions \cite{benamou2012viscosity}; see also \cite{benamou2019minimal,bonnet2022monotone}. Note that these convergence results concern a different method, so comparison with the present work is hard to do. We only point out that the results are not quantitative and typically need to assume some regularity of the input measures $\mu, \nu$ (like absolute continuity) in order to be able to write the Monge-Ampère equation in the first place.

\paragraph{Organization}

We first define properly the dynamic optimal transport problem, and we present the Hamilton-Jacobi equation together with its finite difference discretization: this is a well-understood theory that we summarize in Section~\ref{sec:preliminaries}. We move to our proposed discretization in Section~\ref{sec:discrete_OT}. Our main result, the quantitative convergence rate of the optimal transport cost, is presented in Section~\ref{sec:convergence_of_cost}. With a standard analysis of the duality gap, we prove, under an additional regularity assumption, that we obtain quantitative convergence of some variables (the velocity field $v$ and $\nabla \varphi$) from their discrete to their continuous counterpart in Section~\ref{sec:convergence_optimizers}. We numerically illustrate our results on simple one-dimensional test cases in Section~\ref{sec:numerics}: this shows that our rates are likely not sharp.  
\section{Settings and preliminaries}
\label{sec:preliminaries}

In this section, we present our setting and review the previous works that are going to be ingredients of our discretization of dynamic optimal transport.

\begin{assumptions}
We assume the following in the rest of the article.
\begin{itemize}

\item We restrict to $\Omega := \RR^d/ (D\ZZ^d) $ to be a $d$-dimensional torus with diameter 
\begin{equation*}
\operatorname{diam}(\Omega) = \frac{\sqrt{d}}{2} D.
\end{equation*}

\item The measures $\mu, \nu$ are Borel probability measures on $\Omega$ and we do not make any assumption such as absolute continuity unless otherwise stated. 

\item We take $L : \RR^d \to [0, + \infty)$, the ``Lagrangian'' which is a non-negative, strictly convex and superlinear function.
\end{itemize}
\end{assumptions}

\noindent As for the first point about the domain, we could assume $D=1$, or $\operatorname{diam}(\Omega)=1$ without loss of generality. However we prefer to keep it that way to emphasize how some constants depend on $\operatorname{diam}(\Omega)$. The last condition for the Lagrangian includes the most common choice $L(v) = |v|^p/p$ for $p \in (1, + \infty)$. 


\subsection{Dynamic (and static) optimal transport}

We briefly recall some ingredients of the dynamic optimal transport problem. We refer to the textbooks \cite{santambrogio2015optimal,peyreComputationalOT,villani2003topics} for additional details. 

We directly move to the convex formulation already mentioned in~\eqref{eq:intro_problem_convex} where we now define each term. The infimum will run over all the pairs $(\rho_t, m_t)_{t \in [0,1]}$  of probability distributions $\rho_t \in \mathcal{P}(\Omega)$ and vector-valued measure $m_t \in \mathcal{M}(\Omega)^d$ such that $t \mapsto \rho_t$ is continuous for the weak topology with $\rho_0 = \mu$ and $\rho_1 = \nu$. The continuity equation 
\begin{align*}
    &\partial_t \rho_t + \nabla \cdot m_t=0,
\end{align*}
is meant in the sense of distributions over the space $[0,1] \times \Omega$. For the functional to be optimized, we need a bit more of notations. The so-called Benamou-Brenier functional is defined as a functional for measures \(\rho \in \cM(\Omega)\) and \(m \in \cM^d(\Omega)\) by,
\begin{align*}
    \cB(\rho,m):=\sup_{(a,b)} \int_\Omega a \ \ddr \rho  + \int_\Omega b \cdot \ddr m 
\end{align*}
where the pair \((a,b)\) runs through \(C_b(\Omega; K)\) the space of continuous bounded functions valued in the convex domain \(K\) given by
\begin{align*}
    K:=\left\{(s,w) \in \RR \times \RR^d \text{ such that } s+H(w)\leqslant 0\right\}.
\end{align*}
Here the function \(H : \RR^d\rightarrow \RR\) is called the Hamiltonian, given as the Legendre transform of \(L\) by $H(w)=\sup_v \langle v,w \rangle - L(v)$. With a slight variation of \cite[Proposition 7.7]{santambrogio2015optimal}, it can be proved that if $\cB(\rho,m) < + \infty$ then $\rho$ is a non-negative measure and the measure $m$ is absolutely continuous with respect to $\rho$, and in this case
\begin{equation*}
\cB(\rho,m) = \int_\Omega L(v) \, \ddr \rho,
\end{equation*}
being $v \in L^1(\Omega,\rho)^d$ the Radon-Nikodym density of $m$ with respect to $\rho$.  With these notations the problem~ \eqref{eq:intro_problem_convex} now reads 
\begin{equation}
\label{eq:dynamic_formulation_OT}
\cK(\mu,\nu) = \inf_{(\rho_t,m_t)} \int_0^1 \cB(\rho_t,m_t) \, \ddr t \quad \text{such that} \quad  
\begin{cases}
\partial_t \rho_t + \nabla \cdot m_t = 0 \text{ weakly}, \\
\rho_0 = \mu, \quad \rho_1 = \nu.
\end{cases}
\end{equation} 
The existence of a minimizer to the problem can be shown by the direct method of calculus of variations (see e.g. \cite[Section 2]{chizat2018unbalanced} for a proof in a more general context), or alternatively by building it via an optimal solution for the static primal problem~\eqref{eq:static_ot} introduced below as in~\eqref{eq:competitor_dynamic_from_static}. We do not include the proof as it is not our main concern.  

\begin{theorem}[Existence of a solution in the primal problem]
Under our assumptions the infimum in~\eqref{eq:dynamic_formulation_OT} is attained. 
\end{theorem}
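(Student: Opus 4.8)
The plan is to run the direct method of the calculus of variations after lifting the time-dependent pair $(\rho_t,m_t)_{t\in[0,1]}$ to a pair of measures on the compact product space $[0,1]\times\Omega$. Namely, to an admissible curve I would associate $\bar\rho:=\ddr t\otimes\rho_t\in\cM_+([0,1]\times\Omega)$ and $\bar m:=\ddr t\otimes m_t\in\cM([0,1]\times\Omega)^d$. Being a supremum of weakly-$*$ continuous linear functionals, the Benamou--Brenier functional $\cB_{[0,1]\times\Omega}$ built from the same set $K$ but with test pairs in $C_b([0,1]\times\Omega;K)$ is weakly-$*$ lower semicontinuous, and one has the localization identity
\[
\int_0^1\cB(\rho_t,m_t)\,\ddr t=\cB_{[0,1]\times\Omega}(\bar\rho,\bar m);
\]
the continuity equation with temporal boundary data becomes the single affine constraint
\[
\int_0^1\!\!\int_\Omega\big(\partial_t\phi\,\ddr\bar\rho+\nabla\phi\cdot\ddr\bar m\big)+\int_\Omega\phi(0,\cdot)\,\ddr\mu-\int_\Omega\phi(1,\cdot)\,\ddr\nu=0\qquad\forall\,\phi\in C^1([0,1]\times\Omega),
\]
and the requirement that $\bar\rho$ have time-marginal $\ddr t$ is also a closed condition. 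I would also note the alternative, fully equivalent route of building a minimizer from an optimal plan of the static problem, once existence for the latter is available, as alluded to after~\eqref{eq:dynamic_formulation_OT}; below I sketch only the direct method.

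First I would fix a minimizing sequence $(\rho^n_t,m^n_t)_n$ with cost at most some $C<\infty$ (the infimum is finite because $\Omega$ is compact and $L$ is finite-valued, so a constant-speed geodesic interpolation between $\mu$ and $\nu$ is a competitor of finite energy). Since each $\bar\rho^n$ is a probability measure on the compact space $[0,1]\times\Omega$, the sequence $(\bar\rho^n)_n$ is weakly-$*$ precompact. For the momenta, superlinearity of $L$ provides, for a fixed $R>0$, a constant $C_R$ with $|v|\leqslant L(v)/R+C_R$; combined with $m^n_t\ll\rho^n_t$ (forced by finiteness of $\cB$) this gives $|\bar m^n|([0,1]\times\Omega)=\int_0^1\!\int_\Omega|v^n_t|\,\ddr\rho^n_t\,\ddr t\leqslant C/R+C_R$, a bound uniform in $n$, so $(\bar m^n)_n$ is weakly-$*$ precompact as well. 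I would then extract a subsequence with $\bar\rho^n\rightharpoonup\bar\rho$ and $\bar m^n\rightharpoonup\bar m$.

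Next I would check that the limit is admissible. Testing $\bar\rho^n\rightharpoonup\bar\rho$ against functions depending only on $t$ shows the time-marginal of $\bar\rho$ is still $\ddr t$, so $\bar\rho$ disintegrates as $\ddr t\otimes\rho_t$. The functional in the affine constraint above is, for each fixed $\phi$, weakly-$*$ continuous in $(\bar\rho,\bar m)$, hence the constraint is inherited by $(\bar\rho,\bar m)$; by lower semicontinuity, $\cB_{[0,1]\times\Omega}(\bar\rho,\bar m)\leqslant\liminf_n\cB_{[0,1]\times\Omega}(\bar\rho^n,\bar m^n)\leqslant\liminf_n\int_0^1\cB(\rho^n_t,m^n_t)\,\ddr t=\cK(\mu,\nu)<\infty$, where the middle inequality is the easy direction of the localization identity; finiteness then forces $\bar m\ll\bar\rho$, so $\bar m=\ddr t\otimes m_t$ with $m_t\ll\rho_t$. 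Using the standard fact that a solution of the continuity equation with $\int_0^1|m_t|(\Omega)\,\ddr t<\infty$ has a weakly continuous representative, the affine constraint yields $\rho_0=\mu$ and $\rho_1=\nu$, so $(\rho_t,m_t)_{t\in[0,1]}$ is admissible.

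Finally, combining $\cB_{[0,1]\times\Omega}(\bar\rho,\bar m)\leqslant\cK(\mu,\nu)$ with the localization identity gives $\int_0^1\cB(\rho_t,m_t)\,\ddr t=\cB_{[0,1]\times\Omega}(\bar\rho,\bar m)\leqslant\cK(\mu,\nu)$, and since $(\rho_t,m_t)$ is admissible the reverse inequality is automatic, so it is a minimizer. The step I expect to need the most care is the localization identity $\int_0^1\cB(\rho_t,m_t)\,\ddr t=\cB_{[0,1]\times\Omega}(\ddr t\otimes\rho_t,\ddr t\otimes m_t)$: the inequality ``$\leqslant$'' is immediate because for each $t$ the slice $(a(t,\cdot),b(t,\cdot))$ of a test pair is admissible for $\cB(\rho_t,m_t)$, but the converse is a genuine localization statement for one-homogeneous functionals on measures, proved by expressing both sides via the support function of $K$ with respect to a common dominating measure $\lambda=\bar\rho+|\bar m|$ (legitimately disintegrated in time since $|\bar m|\ll\bar\rho$) and invoking the Bouchitt\'e--Buttazzo-type representation already used for the identity $\cB(\rho,m)=\int_\Omega L(v)\,\ddr\rho$; the other delicate point is making sure the temporal boundary conditions genuinely pass to the weak-$*$ limit. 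Both difficulties are avoided by the alternative construction from an optimal static plan, see e.g.~\cite[Section~2]{chizat2018unbalanced}.
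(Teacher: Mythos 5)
Your proposal is correct and follows precisely the route the paper itself indicates: the paper omits the proof, pointing to the direct method of the calculus of variations (citing \cite[Section 2]{chizat2018unbalanced}) or alternatively the construction~\eqref{eq:competitor_dynamic_from_static} from an optimal static plan, and your argument is a faithful and complete instantiation of the first of these, with the genuinely delicate steps (the localization identity for the one-homogeneous functional on $[0,1]\times\Omega$ and the passage of the temporal boundary data to the limit) correctly identified and handled. Nothing further is needed.
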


\noindent The question of uniqueness is more subtle and one would go to the static problem introduced below to analyze it. The outcome is that the minimum may not be unique, but it is so if at least one of the two measures $\mu, \nu$ is absolutely continuous with respect to the Lebesgue measure~\cite[Theorem 1.25]{santambrogio2015optimal}. 

Being a convex optimization problem under constraint, the dynamic optimal transport problem has a dual form. It reads
\begin{align}\label{eq:dual}
    \cK(\mu,\nu)=\sup_{\varphi} \int_\Omega \varphi(1,\cdot) \, \ddr \nu- \int_\Omega \varphi(0,\cdot) \, \ddr \mu \quad \text{such that} \quad \partial_t \varphi+H(\nabla \varphi)\leqslant 0 
\end{align}
where \(\varphi\) runs over Lipschitz functions defined over $[0,1] \times \Omega$, and the Hamilton-Jacobi constraint $\partial_t \varphi+H(\nabla \varphi)\leqslant 0$ means that $\varphi$ is a viscosity subsolution of $\partial_t \varphi+H(\nabla \varphi)= 0$ as we explain below in Section \ref{sec:viscosity_solutions}. Here $\varphi$ should be interpreted as a Lagrange multiplier for the continuity equation. The key result is that, not only a solution to the dual problem exists, but it has some Lipschitz regularity. 

As $L$ is convex, it is Lipschitz on bounded domains and we denote by $\operatorname{Lip}(L, B_r)$ its Lipschitz constant on the ball $B_r$ centered at $0$ and of radius $r$. On the other hand, $\mathrm{Lip}(\psi)$ for a real-valued function $\psi$ stands for its Lipschitz constant on its whole domain of defintion.   

\begin{proposition}[Existence of a solution in the dual problem]
\label{prop:bound_of_optimal_potential}
Under our assumptions there exists an optimal potential \(\overline{\varphi}\) in the dual dynamic transport problem satisfying
    \begin{align}
    \label{eq:bound_of_optimal_potential}
        \operatorname{Lip}(\overline{\varphi}(t,\cdot))  \leqslant \operatorname{Lip}(L, B_{\operatorname{diam}(\Omega)}), \quad \forall t \in [0,1].
    \end{align}
\end{proposition}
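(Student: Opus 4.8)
The plan is to build an optimal $\overline\varphi$ explicitly out of the \emph{static} transport problem, as a Hopf--Lax evolution of a Kantorovich potential, and then to read off the spatial Lipschitz bound: first at $t=0$ from the $c$-transform structure, then at all times from non-expansiveness of the Hopf--Lax semigroup.

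\emph{Construction.} I would start from static optimal transport on $\Omega$ with cost $c(x,y) := \min_{k\in\ZZ^d} L(y-x+Dk)$, whose value equals $\cK(\mu,\nu)$ by the Benamou--Brenier identity \cite{Benamou2000ACF}. Since $\Omega$ is compact there is an optimal Kantorovich pair, and replacing it by successive $c$- and $\bar c$-transforms one may take it of the form $(\psi,\psi^c)$ with $\psi$ being $\bar c$-concave (i.e. $\psi=(\psi^c)^{\bar c}$) and $\int_\Omega \psi^c\,\ddr\nu - \int_\Omega\psi\,\ddr\mu = \cK(\mu,\nu)$. I would then set $\overline\varphi(0,\cdot):=\psi$ and, for $t\in(0,1]$,
\begin{equation*}
\overline\varphi(t,x):=\inf_{y}\Big\{\psi(y)+t\,L\!\Big(\tfrac{x-y}{t}\Big)\Big\},
\end{equation*}
the Lax--Oleinik solution of $\partial_t\varphi+H(\nabla\varphi)=0$ with datum $\psi$ (here $\psi$ and $L$ are read on $\RR^d$ through periodization, and the infimum is attained by superlinearity of $L$). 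Standard Hopf--Lax theory then gives that $\overline\varphi$ is a viscosity solution --- in particular a subsolution --- of the Hamilton--Jacobi equation, is Lipschitz on $[0,1]\times\Omega$ (once $\psi$ is known to be Lipschitz, see below), and satisfies $\overline\varphi(1,\cdot)=\psi^c$ by the semigroup property; hence $\overline\varphi$ is admissible in~\eqref{eq:dual}, and its value is $\int_\Omega\psi^c\,\ddr\nu-\int_\Omega\psi\,\ddr\mu=\cK(\mu,\nu)$, so it is optimal.

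\emph{Lipschitz bound.} For the bound at $t=0$, fix $x_1,x_2\in\Omega$ and pick a maximizer $y^\ast$ in $\psi(x_2)=\sup_y\{\psi^c(y)-L(y-x_2)\}$ such that $w:=y^\ast-x_2$ obeys $|w|\leqslant\operatorname{diam}(\Omega)$ (the length of optimal transport displacements on the torus is bounded by its diameter). Then $\psi(x_1)\geqslant\psi^c(y^\ast)-L(y^\ast-x_1)$ while $\psi(x_2)=\psi^c(y^\ast)-L(w)$, so for any $g\in\partial L(w)$,
\begin{align*}
\psi(x_1)-\psi(x_2)&\leqslant L(w)-L\big(w+(x_2-x_1)\big)\\
&\leqslant\langle g,\,x_1-x_2\rangle\;\leqslant\;\operatorname{Lip}(L,B_{\operatorname{diam}(\Omega)})\,|x_1-x_2|,
\end{align*}
the middle step being convexity of $L$ and the last one using $|w|\leqslant\operatorname{diam}(\Omega)$; symmetrizing gives $\operatorname{Lip}(\psi)\leqslant\operatorname{Lip}(L,B_{\operatorname{diam}(\Omega)})$. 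To propagate this to $t>0$, write $\overline\varphi(t,\cdot)=\mathcal S_t[\psi]$ with $\mathcal S_t$ the Hopf--Lax semigroup: substituting $y\mapsto y+(x_1-x_2)$ in the infimum gives $\mathcal S_t[\psi](x_1)-\mathcal S_t[\psi](x_2)\leqslant\sup_y\{\psi(y+x_1-x_2)-\psi(y)\}\leqslant\operatorname{Lip}(\psi)\,|x_1-x_2|$, hence $\operatorname{Lip}(\overline\varphi(t,\cdot))\leqslant\operatorname{Lip}(\psi)\leqslant\operatorname{Lip}(L,B_{\operatorname{diam}(\Omega)})$ for every $t\in[0,1]$, which is~\eqref{eq:bound_of_optimal_potential}.

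\emph{Main obstacle.} I expect the delicate point to be extracting the \emph{sharp} radius $\operatorname{diam}(\Omega)$ in the Lipschitz bound: a crude estimate applied directly to $\psi=(\psi^c)^{\bar c}$ would only give the larger ball $B_{2\operatorname{diam}(\Omega)}$, and one must instead combine convexity of $L$ with the fact that the optimal displacement $w$ can be chosen in $B_{\operatorname{diam}(\Omega)}$ --- for the torus this reduces to choosing the cost-minimizing lattice representative inside the fundamental cube, which is immediate for $L=|\cdot|^p/p$ (and more generally whenever $L$ is radially non-decreasing). A secondary, more routine point is to check that the Lax--Oleinik function is a legitimate competitor in~\eqref{eq:dual}: Lipschitz up to the boundary times $\{t=0,1\}$ and a viscosity subsolution in the sense of Section~\ref{sec:viscosity_solutions}.
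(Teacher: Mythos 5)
Your proposal is correct and follows essentially the same route as the paper: both construct $\overline\varphi$ by evolving a Kantorovich potential for the static problem under the Hopf--Lax/viscosity-solution semigroup, identify $\overline\varphi(1,\cdot)$ with the $c$-transform to get optimality, obtain the Lipschitz bound at $t=0$ from the $c$-transform structure of the potential, and propagate it by the non-increase of Lipschitz constants under the Hamilton--Jacobi flow. The only difference is that you spell out the two ingredients (the bound $\operatorname{Lip}(\psi)\leqslant\operatorname{Lip}(L,B_{\operatorname{diam}(\Omega)})$ via a subgradient at the optimal displacement, and the non-expansiveness of Hopf--Lax via a shift in the infimum) that the paper simply cites from the literature.
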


\noindent We delay the proof of this result until the next section as we need additional preliminary results, including the static formulation. 

The \emph{dynamic} formulation of optimal transport is to be contrasted with its \emph{static} one, which we introduce for the sake of completeness, and with which the reader may be more familiar. The cost function $c : \Omega \times \Omega \to [0, + \infty)$ associated to our Lagrangian $L$ is
\begin{equation*}
c(x,y) := \inf_k \{ L(y-x-k), \, k \in D \ZZ^d  \}
\end{equation*} 
as we are on the torus. The transport cost~\eqref{eq:dynamic_formulation_OT} is actually equal to:
\begin{align}\label{eq:static_ot}
    \cK(\mu,\nu):=\inf_{\pi\in \operatorname{ADM}(\mu,\nu)}\int_{\Omega\times \Omega}c(x,y) \, \ddr \pi(x,y)
\end{align}
where \(\operatorname{ADM}(\mu,\nu)\) is the set of the probability measures on $\Omega \times \Omega$ whose marginals are \(\mu\) and \(\nu\). Given a solution $\pi$ to the static optimal transport problem, one can construct a solution to the dynamic optimal transport problem as follows: for any $t \in [0,1]$, being $\Gamma_t : (x,y) \in \Omega \to (1-t)x + ty \in \Omega$ (where the addition is understood modulo $D \ZZ^d$), one sets for any Borel set $A$,
\begin{equation}
\label{eq:competitor_dynamic_from_static}
\rho_t(A) = \pi(\Gamma_t^{-1}(A)), \qquad m_t(A) = \int_{\Gamma_t^{-1}(A)} (y-x) \, \ddr \pi(x,y).   
\end{equation}
The intuition is that, once its initial and final position are chosen via the coupling $\pi$, each particle travels at constant speed on a straight line.
Finally, we introduce the dual formulation of the static optimal transport problem. It reads 
\begin{align}\label{eq:static_dual_cost}
    \cK(\mu,\nu)=\sup_{\psi} \int_\Omega \psi^c \, \ddr \nu +  \int_\Omega \psi \, \ddr \mu 
\end{align}
where  \(\psi^c\) is the \(c-\)transform of \(\psi\) given by $\psi^c(y)\coloneq  \inf_x c(x,y) - \psi(x)$ and \(\psi\) runs through  functions on \({\Omega}\) such that $\psi=\phi^c$ for some $\phi \in C(\Omega)$.
When a function \(\overline{\psi}\) attains the maximum, it is called a Kantorovich potential. The link between the static dual problem and the dynamic dual problem will be made clear in the next section. 

\subsection{Viscosity solutions to the Hamilton-Jacobi equation}
\label{sec:viscosity_solutions}

We briefly review the notion of viscosity solutions as it plays a central role in this article and highlights PDE aspects of optimal transport.  
The Hamilton-Jacobi equations often do not admit a classical solution, but naive notions of weak solution such as continuous functions differentiable almost Lebesgue everywhere can be too weak so that infinitely many solutions may exist. To ensure both existence and weakness, Evans \cite{Evans1980OnSC} and  Crandall and Lions \cite{CrandallLions_viscositySolution} independently introduced so-called viscosity solutions. For a Hamilton-Jacobi equation of the form
\begin{align*}
    \partial_t \varphi
     + H(\nabla \varphi)=0
\end{align*}
on a domain \([0,1]\times \Omega\), a Lipschitz function \(\varphi\) is said to be a viscosity subsolution (resp. supersolution) if, for any test function \(f\in C^1([0,1] \times \Omega)\), any local maximum \(x_0\) of \(f-\varphi\) (resp. local minimum) satisfies 
\begin{align*}
    & \partial_t f (x_0)+H\left(\nabla f(x_0)\right)\leqslant 0 \quad
\text{(resp. }  \partial_t f (x_0)+H\left(\nabla f(x_0)\right)\geqslant 0 \text{)}.
\end{align*} 
The inequality constraint of the dynamic dual optimal transport~\eqref{eq:dual} means that a competitor \(\varphi\) is required to be a viscosity subsolution. 

When \(\varphi\) is both a viscosity subsolution and a viscosity supersolution, it is called a viscosity solution. This definition indeed guarantees the existence of a unique solution. We present here the statement of \cite{CrandallEvansLions1984viscosity} summarizing the results of \cite{CrandallLions_viscositySolution}: even though it is phrased in $\RR^d$, it can be adapted at no cost on $\Omega = \RR^d / (D \ZZ^d)$ by simply identifying a function on $\Omega$ with a $D$-periodic function on $\RR^d$. 

\begin{theorem}[Existence and uniqueness of viscosity solution]\label{th:CandL_viscosity_solution}
    Let \(H : \RR^d\to \RR\) be continuous and \(\varphi_0 : \Omega \to \RR\) be Lipschitz. Then there exists exactly one viscosity solution to the initial value problem,
    \begin{equation*}
        \begin{cases}
        \partial_t \varphi  + H(\nabla \varphi)=0 & \textup{in } [0,\infty)\times \Omega,  \\
        \varphi(0,\cdot)=\varphi_0  & \textup{in }  \Omega.
        \end{cases}
    \end{equation*}
    Moreover, the Lipschitz constant does not increase in time \ie 
    \begin{align*}
        \operatorname{Lip}(\varphi(t',\cdot)) \leqslant \operatorname{Lip}(\varphi(t,\cdot)) \quad \textup{ for } t'>t.
    \end{align*}
\end{theorem}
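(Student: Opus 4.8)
This is the classical Crandall--Lions theorem, and I would build it from a comparison principle (which yields uniqueness and, together with translation invariance, the Lipschitz decay), an existence argument via Perron's method, and the explicit barriers $\varphi_0 \pm Ct$. The feature that simplifies everything here is that $H$ depends only on $\nabla\varphi$ and not on $(t,x)$.

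\emph{Comparison principle.} The core step is to prove: if $u$ is a bounded viscosity subsolution and $v$ a bounded viscosity supersolution on $[0,T]\times\Omega$ with $u(0,\cdot)\le v(0,\cdot)$, then $u\le v$. Suppose $\sup_{[0,T]\times\Omega}(u-v)=\theta>0$. For $\alpha,\beta,\eta>0$ maximize over the compact set $([0,T]\times\Omega)^2$ the function
\begin{equation*}
\Phi(t,x,s,y)=u(t,x)-v(s,y)-\frac{|x-y|^2}{2\alpha}-\frac{(t-s)^2}{2\beta}-\frac{\eta}{T-t}
\end{equation*}
at a point $(\bar t,\bar x,\bar s,\bar y)$. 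The usual penalization estimates give $|\bar x-\bar y|^2/\alpha\to 0$ and $(\bar t-\bar s)^2/\beta\to 0$, while the maximal value stays $\ge\theta/2$ for $\alpha,\beta$ small; combined with $u(0,\cdot)\le v(0,\cdot)$ and $\Phi\to-\infty$ as $t\to T$, this forces $0<\bar t,\bar s<T$. Testing the subsolution inequality for $u$ with $x\mapsto|x-\bar y|^2/(2\alpha)+(\bar t-\bar s)^2/(2\beta)+\eta/(T-t)$ and the supersolution inequality for $v$ with $y\mapsto-|\bar x-y|^2/(2\alpha)-(\bar t-s)^2/(2\beta)$ yields
\begin{align*}
\frac{\bar t-\bar s}{\beta}+\frac{\eta}{(T-\bar t)^2}+H\!\left(\tfrac{\bar x-\bar y}{\alpha}\right)&\le 0,\\
\frac{\bar t-\bar s}{\beta}+H\!\left(\tfrac{\bar x-\bar y}{\alpha}\right)&\ge 0,
\end{align*}
and subtracting gives $\eta/(T-\bar t)^2\le 0$, a contradiction. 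The decisive point is that both inequalities carry the \emph{same} gradient $(\bar x-\bar y)/\alpha$, so the $H$-terms cancel with $H$ merely continuous. Applying this in both directions to two viscosity solutions sharing the datum $\varphi_0$ gives uniqueness on every $[0,T]$, hence on $[0,\infty)$.

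\emph{Existence.} Set $C:=\sup_{|w|\le\operatorname{Lip}(\varphi_0)}|H(w)|<\infty$. Since any $C^1$ function touching $\varphi_0\pm Ct$ from the appropriate side has spatial slope bounded by $\operatorname{Lip}(\varphi_0)$, one checks that $(t,x)\mapsto\varphi_0(x)-Ct$ is a viscosity subsolution and $(t,x)\mapsto\varphi_0(x)+Ct$ a viscosity supersolution, both with initial value $\varphi_0$. I would then run Perron's method: let $\varphi(t,x):=\sup\{w(t,x):w\text{ a subsolution},\ \varphi_0-Ct\le w\le\varphi_0+Ct\}$; the standard upper/lower envelope and bump-function arguments show $\varphi$ is a viscosity solution, the barriers force $\varphi(0,\cdot)=\varphi_0$ and $|\varphi(t,\cdot)-\varphi_0|\le Ct$, and boundedness on each time slab makes the comparison principle above directly applicable. (Vanishing viscosity, solving the mollified parabolic regularization and passing to the limit with $\varepsilon$-uniform Lipschitz bounds, is an alternative but heavier route.)

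\emph{Lipschitz decay, and the main obstacle.} Fix $0\le t<t'$ and $z\in\RR^d$, identifying $\Omega$-functions with $D$-periodic ones. Because $H$ sees only the gradient, $(s,x)\mapsto\varphi(s,x+z)$ is again a viscosity solution on $[t,\infty)$, and so is $(s,x)\mapsto\varphi(s,x)+\operatorname{Lip}(\varphi(t,\cdot))\,|z|$; at $s=t$ the former is $\le$ the latter, so comparison on $[t,\infty)$ gives the inequality for all $s\ge t$, and exchanging $x\leftrightarrow x+z$ yields $\operatorname{Lip}(\varphi(s,\cdot))\le\operatorname{Lip}(\varphi(t,\cdot))$, in particular at $s=t'$. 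The real difficulty lies entirely in the comparison principle — localizing the maximum of $\Phi$ strictly inside $(0,T)$ and controlling the penalization errors as $\alpha,\beta\to 0$; uniqueness and the Lipschitz decay are then formal consequences, and existence is routine once the two barriers $\varphi_0\pm Ct$ are available.
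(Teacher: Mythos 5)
Your proposal is correct, but note that the paper does not prove this theorem at all: it is quoted verbatim as a classical result of Crandall--Lions (via Crandall--Evans--Lions), with only the remark that the $\RR^d$ statement transfers to the torus by periodic lifting. Your reconstruction is the standard route for a merely continuous, $x$-independent Hamiltonian, and the three pillars are sound: the doubling-of-variables comparison principle (where you correctly isolate the decisive point that both test functions carry the identical gradient $(\bar x-\bar y)/\alpha$, so the $H$-terms cancel without any modulus of continuity for $H$), Perron's method between the barriers $\varphi_0\pm Ct$, and the Lipschitz decay by comparing $\varphi(\cdot,\cdot+z)$ with $\varphi+\operatorname{Lip}(\varphi(t,\cdot))\,|z|$. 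Two technical points you gloss over deserve a sentence each in a full write-up: ruling out $\bar t=0$ or $\bar s=0$ requires the uniform continuity of $u$ and $v$ near $t=0$ (available here since viscosity solutions are taken Lipschitz on the compact set $[0,T]\times\Omega$), and Perron's method needs the comparison principle in the semicontinuous class, not just for continuous sub/supersolutions. It is also worth observing that in the paper's actual use of the theorem the Hamiltonian is the Legendre transform of a strictly convex superlinear $L$, so the Hopf--Lax formula (which the paper invokes immediately after the statement) gives a shorter alternative: it produces the solution explicitly, and the Lipschitz decay follows at once because an infimum over $y$ of $\varphi_0(y)+tL((x-y)/t)$ inherits the Lipschitz constant of $\varphi_0$. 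Your route buys the full generality of continuous $H$ as literally stated in the theorem; the Hopf--Lax route buys explicitness in the convex case the paper cares about.
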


\noindent Actually, in this simple setting where the Hamiltonian is not dependent on the spatial variable, the unique viscosity solution is characterized by the Hopf-Lax formula \cite[Theorem 3 in Chapter 10.3]{evans10PDE}: with the assumptions and notations of the theorem it is equal to
\begin{align}
\label{eq:Hopf-Lax}
\varphi (t,x) \coloneqq \inf_y \; \varphi_0(y)+tL\left(\frac{x-y}{t}\right).
\end{align}
The Hopf-Lax formula is a key ingredient to bridge the dual formulations of optimal transport in its static~\eqref{eq:static_dual_cost} and dynamic~\eqref{eq:dual} form. This is also what enables us to prove Proposition~\ref{prop:bound_of_optimal_potential}. 

\begin{proof}[\textbf{Proof of Proposition~\ref{prop:bound_of_optimal_potential}}]
We can choose an optimal Kantorovich potential \(\overline{\psi}\) for the static dual problem~\eqref{eq:static_dual_cost} so that it is Lipschitz continuous with \(\operatorname{Lip}(\overline{\psi}) \leqslant \sup_y \operatorname{Lip}(c(\cdot,y)) \leqslant \operatorname{Lip}(L, B_{\operatorname{diam}(\Omega)}) \)~\cite[Section 1.2.]{santambrogio2015optimal}. Then we define \(\overline{\varphi}\) as the unique viscosity solution of the Hamilton-Jacobi equation with initial data \(-\overline{\psi}\). Observe by~\eqref{eq:Hopf-Lax} that \(\overline{\varphi}(1,\cdot)=\overline{\psi}^c \) and hence 
\begin{align*}
    \int _\Omega \overline{\varphi}(1,\cdot) \, \ddr \nu - \int_\Omega \overline{\varphi}(0,\cdot) \, \ddr \mu= \int _\Omega \overline{\psi}^c \, \ddr \nu + \int_\Omega \overline{\psi}\, \ddr \mu = \cK(\mu,\nu).
\end{align*}
Therefore, \(\overline{\varphi}\) is not merely an admissible competitor but is an optimal potential for the dynamic dual problem~\eqref{eq:dual}. Finally Theorem \ref{th:CandL_viscosity_solution} asserts that \(\operatorname{Lip}(\varphi(t,\cdot)) \leqslant \operatorname{Lip}(\varphi(0,\cdot)) = \operatorname{Lip}(\overline{\psi}) \) for \(t\in[0,1]\).
\end{proof}

\subsection{Discretization of the Hamilton-Jacobi equation}\label{sec:discrete_HJ}

As we mentioned in the introduction, our key idea is to discretize not the primal formulation~\eqref{eq:dynamic_formulation_OT} but rather the dual formulation~\eqref{eq:dual}. Indeed discretization of the Hamilton-Jacobi equations is a widely studied topic, in particular since the seminal work of Crandall and Lions \cite{CrandallLions_discreteHJ}. 

We adapt Crandall and Lions's original setting of the domain \([0,\infty) \times \RR^d\) for \(Q=[0,1]\times (\RR^d / (D \ZZ^d))\). Their results are still valid on \(Q\) without additional conditions as we can simply extend functions on \(\Omega = \RR^d / (D \ZZ^d)\) by copying it infinite times for \(\RR^d\). 

The time domain \([0,1]\) is discretized by uniformly sampling points: we define \(T_D \coloneq \{0, \Delta t, \ldots, (N_T-1)\Delta t,1\} \) with some positive integer \(N_T\) and \(\Delta t =1/N_T\). The space domain \(\Omega=\RR^d / (D \ZZ^d)\) is discretized in the same way, that is we write \(\Omega_D \coloneq \{(j_1 \Delta x, \ldots, j_d \Delta x)\}\) 
with indices \((j_1,\ldots, j_d)\in (\ZZ/N_X \ZZ)^d \coloneqq \{0,1,\ldots,N_X-1\}^d\) with some positive integer \(N_X\) and \(\Delta x=D/N_X\). 
In the sequel, we write \(j\coloneqq(j_1,\ldots, j_d)\) and \(j\Delta x\coloneq (j_1 \Delta x, \ldots, j_d \Delta x)\) for simplicity.
Hence the discretization of \(Q\) is given by \(Q_D\coloneqq T_D \times \Omega_D\). 

Each continuous function \(\varphi\) is approximated by a discrete function \(\Phi : Q_D\to \RR\) given as the evaluation of \(\varphi\) at the grid points. For the value at \((i\Delta t,j\Delta x)\), we write \(\Phi^i_j\) or \(\Phi^{i}_{j_1,\ldots,j_d}\), but we will  occasionally omit subscripts \(i\) and \(j\) when we mean the collection \(\{\Phi^i_j\}^i_j\) or we do not need to specify \(i\) or \(j\). We denote by \(\RR^Z\) the collection of functions from a set \(Z\subseteq Q_D\) to \(\RR\), and define
\begin{align*}
    \| \Psi \|_{L^\infty(Z)}\coloneqq \max_{z\in Z} |\Psi(z)|.
\end{align*}
for function \(\Psi\in \RR^Z\).

Discretization of the Hamilton-Jacobi equation of the form \(\partial_t \varphi+H(\nabla \varphi)=0\) is given as follows. The initial state of the discrete function \(\Phi^0\) is given on the grid points \(\{0\}\times \Omega_D\subset Q_D\) and it is time-updated as
\begin{align*}
    \Phi^{i+1} =\cS( \Phi^{i} ),
\end{align*}
by a map \(\cS : \RR^{\Omega_D} \to \RR^{\Omega_D}\) called a {\em scheme}. We will deal with a certain class of schemes following a standard finite difference setting as in \cite{CrandallLions_discreteHJ}. A scheme \(\cS\) is said of {\em difference form} if there exists a function $\cG\colon \RR^{2d}\to \RR$ such that 
\begin{align*}
    \cS_j( \Psi ) = \Psi_{j} - \Delta t \cG\left(\frac{ {\Delta_{-,j}}\Psi}{\Delta x}, \frac{ {\Delta_{+,j}}\Psi}{\Delta x}\right),
\end{align*}
for each \(j\) where \({\Delta_{-,j}}\coloneqq(\Delta_{-,j}^1,\ldots,\Delta_{-,j}^d)\) takes the spatially backward difference
\begin{align*}
   {\Delta_{-,j}^k}\Psi \coloneqq \Psi_{j_1,\ldots,  j_{k}, \ldots, j_d }-  \Psi_{j_1,\ldots,  j_{k}-1, \ldots, j_d },
\end{align*}
for each $k$ and \({\Delta_{+,j}}\coloneqq(\Delta_+^1,\ldots,\Delta_+^d)\) takes the forward difference 
\begin{align*}
    {\Delta_{+,j}^k}\Psi \coloneqq \Psi_{j_1,\ldots,  j_k+1, \ldots, j_d }-  \Psi_{j_1,\ldots,  j_k, \ldots, j_d },
\end{align*}
for each \(k=\{1,\ldots,d\}\). Of course the expression $j_k \pm 1$ is understood modulo $N_X$ as we have periodic boundary conditions. We will omit the subscript \(j\) for \(\Delta_{\pm}\) when there is no confusion. The two important properties that a scheme can have are the following:
\begin{itemize}
\item \emph{Consistency}. A scheme \(\cS\) of  difference form is {\em consistent} with \(H\) if \(\cG\) satisfies $\cG(a,a)=H(a)$ for any \(a\in \RR^d\). It is equivalent to say that \(\cS\) gives the exact solution on grid points for any time-space affine function.
\item \emph{Monotonicity}. Let us define a subset of discrete space functions
 \begin{align*}
    \cC_R \coloneqq \left\{ \Psi\in \RR^{\Omega_D} \text{ such that } \forall k,j \,  \left| \frac{\Delta^k_{+,j} \Psi}{\Delta x} \right| \leqslant R \right\}
\end{align*}
for a fixed \(R>0\). The space $\cC_R$ can be interpreted as the discrete counterpart of the functions which are $R$-Lipschitz in each coordinate. A scheme is {\em monotone} on \([-R,R]\) if for each grid point $j$ the restriction of $\cS_j$ to $\cC_R$ is non-decreasing with respect to any of its variables. 
\end{itemize}

The striking result is that these two properties guarantee not only convergence of the discrete solutions to the continuous ones, but also a quantitative rate of convergence. Crandall and Lions showed a quantitative estimate for the approximation error of such discrete solutions on a family of time-space discretizations of domain \(Q\) \cite[Theorem 1]{CrandallLions_discreteHJ}. In their work, the ratio \(\zeta\coloneqq \Delta t/ \Delta x\)  within a family of discretizations is fixed. Throughout this article, we follow their setting, and for each discretization \(Q_D\) we denote its resolution by \(h\coloneqq \Delta t\).

\begin{theorem}[Convergence of discrete Hamilton-Jacobi equation]\label{th:CandL_discrete_bound}
    Let \(H : \RR^d\rightarrow \RR\) be continuous and  \(\varphi\) be the unique viscosity solution to the initial value problem,
    \begin{align*}
		\begin{cases}        
        \partial_t \varphi+H(\nabla \varphi)=0 \quad \textup{ in } Q=[0,1]\times \Omega,\\
        \varphi(0,\cdot)=\varphi_0,
         \end{cases}
    \end{align*}
    with initial data \(\varphi_0\) Lipschitz in each coordinate with Lipschitz constant \(R>0\).  
    Let \(Q_D=T_D \times \Omega_D\) be a discretization of \(Q\) with resolution \(\Delta t = h\) and ratio \(\zeta = \Delta t/ \Delta x\) fixed. Define a discrete solution \(\Phi\in \RR^{Q_D}\) by 
    \begin{align*}
    	\begin{cases}
        \Phi^{i+1}=\cS(\Phi^i), & i\in\{0,\ldots,N_T-1\},\\
        \Phi^0_j\coloneqq \varphi_0(j \Delta x), & \forall j,
		\end{cases}
    \end{align*}
    with a scheme \(\cS\) of  difference form which is consistent with \(H\) and monotone on \([-R-\delta,R+\delta]\) for some $\delta > 0$. 
    
Then we have the estimate,
    \begin{align*}
        \| \Phi - \varphi\|_{L^\infty(Q_D)}\leqslant C \sqrt{h}
    \end{align*} 
     with a constant \(C\) depending only on the scheme $\cS$, \(\| \varphi_0\|_{L^\infty(Q)}, R+\delta\), and \(H\). 
\end{theorem}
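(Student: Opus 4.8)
The plan is to run the doubling-of-variables argument --- the one behind both the comparison principle for viscosity solutions and Crandall--Lions's error estimate --- comparing $\Phi$ with $\varphi$ through an auxiliary functional on $Q_D \times Q$ and reading the resolution off as a consistency error balanced against a penalization parameter. \emph{Step 1: a priori bounds.} Since $\cS$ is of difference form it is translation invariant, $\cS(\Psi + c) = \cS(\Psi) + c$ for a constant $c$. Combined with monotonicity on $[-R-\delta, R+\delta]$ and with $\Phi^0 \in \cC_R$ (because $\varphi_0$ is $R$-Lipschitz in each coordinate), an induction comparing $\Phi^i$ with its spatial shifts shows $\Phi^i \in \cC_R$ for all $i$ --- the discrete analogue of the non-increase of the spatial Lipschitz constant in Theorem~\ref{th:CandL_viscosity_solution}. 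Consistency ($\cG(a,a) = H(a)$) then gives $|\Phi^{i+1}_j - \Phi^i_j| \leqslant \Delta t \sup_{|a|, |b| \leqslant R+\delta} |\cG(a,b)|$, hence a uniform $L^\infty$ bound on $\Phi$ depending only on $\|\varphi_0\|_{L^\infty}$, $R+\delta$ and $\cG$. The viscosity solution $\varphi$ satisfies the analogous bounds by Theorem~\ref{th:CandL_viscosity_solution}. In particular only the values of $H$ and $\cG$ on a fixed ball ever enter, so we may assume both globally Lipschitz.

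\emph{Step 2: the penalized functional and its maximizer.} For parameters $\varepsilon, \gamma > 0$ set
\begin{equation*}
\Sigma := \max_{(i,j) \in Q_D,\ (s,y) \in Q} \left\{ \Phi^i_j - \varphi(s,y) - \frac{|j\Delta x - y|^2 + |i\Delta t - s|^2}{2\varepsilon} - \frac{\gamma}{2 - s} \right\},
\end{equation*}
attained at some $(i^\ast, j^\ast, s^\ast, y^\ast)$ by compactness; since $\Phi^i_j - \varphi(i\Delta t, j\Delta x) \leqslant \Sigma + \gamma$, bounding $\Sigma$ from above is enough. Testing against nearby competitors gives $|j^\ast\Delta x - y^\ast| + |i^\ast\Delta t - s^\ast| = O(\varepsilon)$, and comparing $j^\ast$ with $j^\ast \pm e_k$ inside $Q_D$ (using $\Phi^{i^\ast} \in \cC_R$) shows $p := (j^\ast\Delta x - y^\ast)/\varepsilon$ obeys $|p| \leqslant R + O(\Delta x/\varepsilon)$; hence all finite differences that will feed the scheme below lie in $[-R-\delta, R+\delta]$ as soon as $\varepsilon \gtrsim \Delta x/\delta$, which holds for $h$ small since $\Delta x = h/\zeta$. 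If $i^\ast = 0$ or $s^\ast = 0$, then $\Sigma \leqslant C(\varepsilon + h)$ follows directly from $\Phi^0_j = \varphi_0(j\Delta x)$ together with the Lipschitz and time-regularity bounds of Step 1.

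\emph{Step 3: the interior case (the crux), and conclusion.} Suppose now $i^\ast \geqslant 1$ and $s^\ast > 0$. Freezing $(i^\ast, j^\ast)$, the function $(s,y) \mapsto \varphi(s,y) + (|j^\ast\Delta x - y|^2 + |i^\ast\Delta t - s|^2)/(2\varepsilon) + \gamma/(2-s)$ has an interior local minimum at $(s^\ast, y^\ast)$, so the viscosity supersolution property yields $\tau + H(p) \geqslant \gamma/(2-s^\ast)^2 \geqslant \gamma/4$, where $\tau := (i^\ast\Delta t - s^\ast)/\varepsilon$. Freezing $(s^\ast, y^\ast)$, maximality over $Q_D$ of $(i,j) \mapsto \Phi^i_j - (|j\Delta x - y^\ast|^2 + |i\Delta t - s^\ast|^2)/(2\varepsilon)$ at $(i^\ast, j^\ast)$ gives $\Phi^{i^\ast-1}_j \leqslant \Psi_j$ for all $j$ with equality at $j^\ast$, where $\Psi_j = \Phi^{i^\ast}_{j^\ast} + (|j\Delta x - y^\ast|^2 - |j^\ast\Delta x - y^\ast|^2)/(2\varepsilon) + \mathrm{const}$; near $j^\ast$ the discrete gradients of $\Psi$ equal $p + O(\Delta x/\varepsilon)$, and monotonicity of $\cS_{j^\ast}$ (valid since all these differences are in $[-R-\delta,R+\delta]$) gives $\Phi^{i^\ast}_{j^\ast} = \cS_{j^\ast}(\Phi^{i^\ast-1}) \leqslant \cS_{j^\ast}(\Psi)$. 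Expanding the right-hand side by the difference form and using consistency plus the Lipschitz bound on $\cG$, the term $\Phi^{i^\ast}_{j^\ast}$ cancels and, dividing by $\Delta t$, one is left with $\tau + H(p) \leqslant C(\Delta t + \Delta x)/\varepsilon = Ch/\varepsilon$, the right-hand side collecting the one-step time error and the $O(\Delta x/\varepsilon)$ spatial consistency error. Comparing the two estimates forces $\gamma/4 \leqslant Ch/\varepsilon$, impossible once $\gamma := 8Ch/\varepsilon$; so for this $\gamma$ the maximizer lies on the initial slab and $\Sigma \leqslant C(\varepsilon + h)$. Then $\Phi^i_j - \varphi(i\Delta t, j\Delta x) \leqslant C(\varepsilon + h) + 8Ch/\varepsilon$, and $\varepsilon := \sqrt h$ gives $\Phi - \varphi \leqslant C'\sqrt h$ on $Q_D$; the reverse bound $\varphi - \Phi \leqslant C'\sqrt h$ follows from the mirror-image argument, with the viscosity subsolution property of $\varphi$ and $\cS_{j^\ast}$'s monotonicity used to bound $\cS_{j^\ast}(\Phi^{i^\ast-1})$ from below.

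\emph{Main obstacle.} The delicate part is Step 3: promoting the algebraic scheme identity to the differential-type inequality $\tau + H(p) \leqslant Ch/\varepsilon$. This needs (i) that every finite-difference gradient entering $\cS_{j^\ast}$ stays in the window $[-R-\delta, R+\delta]$ where monotonicity holds --- which is exactly what the a priori Lipschitz bound of Step 1 and the relation $\varepsilon \gg \Delta x$ buy us --- and (ii) a careful accounting of the two error sources, the $O(\Delta x/\varepsilon)$ from replacing $\cG(\Delta_-\Psi/\Delta x, \Delta_+\Psi/\Delta x)$ by $H(p)$ and the $O(\Delta t/\varepsilon)$ from the single time step. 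Their balance against the penalization, optimal at $\varepsilon \sim \sqrt h$, is what pins the rate at $\sqrt h$ and, as noted in Remark~\ref{rmk:CL_determines_rate}, what precludes a faster rate for smoother data.
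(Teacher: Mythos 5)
The paper does not actually prove this theorem: it is imported (up to the periodic setting, the ``Lipschitz in each coordinate'' phrasing, and relaxing $\delta=1$ to $\delta>0$) from Theorem~1 of Crandall--Lions \cite{CrandallLions_discreteHJ}, with only the propagation of the discrete Lipschitz bound sketched in Remark~\ref{rmk:proof_regularizing_HJ}. Your argument is a correct reconstruction of that original proof --- the doubled-variables functional with quadratic penalization at scale $\varepsilon$, the strictness term $\gamma/(2-s)$, the dichotomy between the initial slab and the interior case, and the balance of $\gamma\sim h/\varepsilon$ against $\Sigma\lesssim \varepsilon$ at $\varepsilon=\sqrt h$ --- and your Step~1 is precisely the regularity-propagation argument the paper isolates in that remark as the reason for working on the torus. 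Two small points to tighten: comparing $(i^\ast-1,j)$ with $(i^\ast,j^\ast)$ in the maximum yields $\Phi^{i^\ast-1}_j\leqslant \Psi_j$ with $\Psi_{j^\ast}=\Phi^{i^\ast}_{j^\ast}-\Delta t\,\tau+\Delta t^2/(2\varepsilon)$, not equality at $j^\ast$ (the cancellation you describe then goes through verbatim and in fact produces the $\tau$ term); and the boundary case $s^\ast=1$ should be dispatched, e.g.\ by extending the Hopf--Lax solution past $t=1$ so that the supersolution test point is always interior in time.
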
 

\noindent This theorem will be the main result we will use to get our quantitative convergence rates for the optimal transport problem.

\begin{remark}
The theorem is usually stated with $\delta = 1$, that is, the scheme should be monotone on \([-R-1,R+1]\), but a close inspection of the proof in~\cite{CrandallLions_discreteHJ} reveals that any $\delta > 0$ is possible. 
\end{remark}

\begin{remark}[Lipschitz in each coordinate and why we restrict to a periodic setting]
\label{rmk:proof_regularizing_HJ}
We have done another modification compared to the classical statement of the theorem. We assume that $\varphi_0$ is $R$-Lipschitz in each coordinate instead of simply $R$-Lipschitz, that is, for every $k$ and $(x_1, \ldots x_{k-1}, x_{k+1}, \ldots,x_d) \in \RR^{d-1} / (D \ZZ^{d-1})$ the function 
\begin{equation*}
x \mapsto \varphi_0(x_1, \ldots x_{k-1}, x ,x_{k+1}, \ldots,x_d)
\end{equation*}
is $R$-Lipschitz. A $R$-Lipschitz function is $R$-Lipschitz in each coordinate, and a function which is $R$-Lipschitz in each coordinate is $\sqrt{d} R$-Lipschitz in the classical sense. Functions which are $R$-Lipschitz in each coordinate are the perfect analogue of the space $\cC_R$ of discrete functions, and this will be useful in the proof of our Theorem~\ref{th:convergence_of_cost}. 

An important step in the proof of the theorem is the propagation of the regularity for the discrete solution: one can prove that if $\Phi^0 \in \cC_R$ then 
\begin{align}
\label{eq:regularizing_effect_HJ}
\Phi^i \in \cC_R, \quad \forall i. 
\end{align}
It is obtained by combining two simple arguments. The first one is that, as the scheme commutes with the addition with constant functions and is monotone, it is non-expansive in $L^\infty$:
\begin{align*}
\|\cS(\Psi) - \cS(\Psi')\|_{L^\infty (\Omega_D)}  \leqslant  \|\Psi - \Psi'\|_{L^\infty (\Omega_D)} 
\end{align*}
for any pair of \(\Psi\) and \(\Psi'\) in \(\cC_R\)~\cite[Proposition 3.1]{CrandallLions_discreteHJ}. The second one is to apply the non-expansiveness to $\Psi'$, a shifted version of $\Psi$. As a shift in space commutes with the discrete differential operators $\Delta^k_+, \Delta^k_-$ for fixed $k$, we obtain 
\begin{align*}
\|\Delta_+^k\cS(\Psi)\|_{L^\infty(\Omega_D)}\leqslant \|\Delta_+^k\Psi\|_{L^\infty(\Omega_D)},
\end{align*}
and from there an immediate induction gives~\eqref{eq:regularizing_effect_HJ}. Note that the same argument at the continuous level gives that, if $\varphi_0$ is $R$-Lipschitz in each coordinate, then so is $\varphi(t,\cdot)$ for any $t \geqslant 0$.

Importantly, this regularizing effect of the scheme does not work if the spatial domain is no longer the torus nor the whole Euclidean space: the first estimate stays valid but the second argument about the commutativity breaks down on the boundary. For this reason, our work is phrased on a periodic domain rather than on a bounded domain: we will not rely on the estimate~\eqref{eq:regularizing_effect_HJ} in itself, but this estimate is necessary in order for Theorem~\ref{th:CandL_discrete_bound} to be true. 
\end{remark}

\subsubsection{Vanishing viscosity scheme}\label{sec:vanishing_viscosity_scheme}

An example of monotone and consistent schemes presented in \cite{CrandallLions_discreteHJ} is the so-called vanishing viscosity scheme, which is a discrete analogue of the vanishing viscosity method for the continuous Hamilton-Jacobi equation. For a discrete space function \(\Psi\in \RR^{\Omega_D}\), it is given by
\begin{align}
\label{eq:vanishing_viscosity_scheme}
    \cS(\Psi)=\Psi-\Delta t \left\{H(\nabla_{D} \Psi) - \varepsilon  \Delta_D \Psi  \right\},
\end{align}
with some \(\varepsilon>0\).
Here \(\nabla_D, \Delta_D\) are respectively the discrete centered gradient and the discrete Laplacian given by,
\begin{align*}
    \nabla_{D,j}\Psi
    \coloneqq \frac{ \Delta_{-,j}\Psi+ {\Delta_{+,j}\Psi}}{2\Delta x}, \qquad \Delta_{D,j}\Psi\coloneqq
    \sum_k \frac{{\Delta_{+,j}^k}\Psi-{\Delta_{-,j}^k}\Psi}{(\Delta x)^2}
\end{align*}
at each location \(j\).
With the (forward) discrete time derivative
\begin{align*}
    \partial_{\Delta t} ^i\Phi \coloneqq\frac{\Phi^{i+1}-\Phi^{i}}{\Delta t},
\end{align*}
the scheme \eqref{eq:vanishing_viscosity_scheme} reads
\begin{align*}
    \partial_{\Delta t} \Phi + H(\nabla_D \Phi) - \varepsilon  \Delta_D \Phi = 0,
\end{align*}
for a discrete time-space function \(\Phi\in \RR^{Q_D}\),
which is a discrete analogue of the Hamilton-Jacobi equation with the viscosity term. 

The consistency is immediate: for \(\Psi\in \RR^{\Omega_D}\) satisfying \(\Delta_{+} \Psi / \Delta x=\Delta_{-}\Psi / \Delta x =(a_1,\ldots,a_d)\), we have \(H(\nabla_{D}\Psi)=H(a_1,\ldots,a_d)\) and \( \Delta_{D} \Psi =0 \).
The monotonicity is not attained only with the Hamiltonian term, but can be ensured together with the viscosity term. 
A simple computation reveals that the scheme is monotone on $\cC_R$ when  \(\varepsilon\) satisfies 
\begin{align}
\label{eq:monotonicity_condition}
\frac{\operatorname{Lip}(H,B_R)}{2} \leqslant \frac{\varepsilon}{\Delta x} \leqslant \frac{\Delta x}{2d\Delta t} 
\end{align}
where $\operatorname{Lip}(H,B_R)$ is the Lipschitz constant of $H$ on the centered ball of radius $R$. Recalling that we assume that the ratio \(\zeta=\Delta t/\Delta x\) is fixed among a family of discretizations of \(Q\), we need to choose $\zeta \geqslant d \operatorname{Lip}(H,B_R)$, and in this case $\varepsilon$ will go to zero at rate $h = \Delta t \propto \Delta x$: the viscosity $\varepsilon$ vanishes together with the stepsize.

Several alternatives can be found in the literature, such as the upwind scheme also in \cite{CrandallLions_discreteHJ} (for $d=1$), higher order finite difference schemes~\cite{osher1991high}, and also discontinuous Galerkin methods~\cite{cheng2007discontinuous}. We, however, focus on the vanishing viscosity scheme in this article because of the following reasons: it is simple; we are able to solve efficiently the discrete dynamic optimal transport we build on it; and the function $\overline{\varphi}$ is a priori not expected to be better than Lipschitz uniformly over the space-time domain so using high order schemes seems less helpful as far as the theoretical analysis is concerned.

\section{Discrete dynamic optimal transport}
\label{sec:discrete_OT}

In this section, we introduce a discrete formulation of dynamic optimal transport. We first do so using a general monotone and consistent scheme, and then specifically with the vanishing viscosity scheme explained in the previous section.
\subsection{Discrete problem for a general scheme}\label{sec:discrete_OT_general_scheme}
We  discretize the dynamic dual problem~\eqref{eq:dual} on \(Q= [0,1] \times \Omega\). For the discretization of the domain, we use \(Q_D= T_D \times \Omega_D\) defined in Section \ref{sec:discrete_HJ}. 

Regarding the discretization of probability measures, we follow a standard approach. For a given \(\mu \in \cP(\Omega)\), we define its discretization \(\Pi\mu\) in a way that \(\Pi\mu \overset{\ast}\rightharpoonup \mu\)  as \(\Delta x \rightarrow 0\). A simple choice is the projection onto the Dirac measures on grid points, given by
\begin{align}\label{eq:discrete_probability_measure}
    \Pi\mu
    = \sum_j \mu({B_{j\Delta x}})\delta_{j\Delta x},
\end{align}
where \(B_{j\Delta x}\) is the \(d-\)dimensional half-open box centered at \(j\Delta x\) with edge length \(\Delta x\) given by,
  \begin{align*}
    B_{j\Delta x}=\big[(j_1-1/2) \Delta x, (j_1+1/2)\Delta x\big) \times \cdots \times \big[(j_d-1/2) \Delta x, (j_d+1/2)\Delta x\big).
  \end{align*} 
  
\begin{remark}[Choice of discrete measure]\label{rmk:asymmetry_mu_nu}
    In this article, we stick to this specific discretization of measures, but this is not the unique choice. For example, piecewise uniform measures in the boxes \(\{B_{j\Delta x}\}_j\) is also a reasonable option. All our results easily extend to this discretization of measures as soon as Lemma~\ref{lem:continuous discrete integral gap} is valid. 
\end{remark}

\paragraph{Clamped discrete gradient} With the above settings, we are almost ready to introduce our discrete optimal transport problem. We finally impose a constraint on discrete functions as follows. 
We will later guarantee the convergence of the discrete transport cost to the continuous one. For this result, we will need Theorem~\ref{th:CandL_discrete_bound} which requires a monotone scheme on \([-R-\delta,R+\delta]\) with $\delta > 0$ and an initial condition \(\Phi^0\in \cC_R\).
However, the boundedness of \(\Delta_{+}  \Phi^0 / \Delta x\) is not a priori guaranteed in our upcoming formulation of discrete optimal transport, in contrast to that, the gradient of an optimal potential can be chosen to be bounded by \( \operatorname{Lip}(L,B_{\operatorname{diam}(\Omega)}) \) in the continuous setting stated as Proposition~\ref{prop:bound_of_optimal_potential}. To cope with this problem, we explicitly constrain
\begin{align*}
\forall j,k, \quad    \left| \frac{ \Delta^k_{+,j} \Phi^0}{\Delta x} \right| \leqslant R
\end{align*}
for a parameter $R \geq \operatorname{Lip}(L,B_{\operatorname{diam}(\Omega)})$. Said differently we impose $\Phi^0 \in \cC_{\operatorname{Lip}(L,B_{R})}$. 

\begin{definition}[Discrete dynamic optimal transport]
    \label{def:discrete_dynamic_optimal_transport}
    Let \(\mu,\nu\in \cP(\Omega)\) and choose a parameter \( R \geq \operatorname{Lip}(L,B_{\operatorname{diam}(\Omega)})\). Let us assume a time-space discretization as defined so far, and \(\cS\) be a scheme of difference form that is monotone on \([-R-\delta,R+\delta]\) for some $\delta > 0$ and
 consistent with the Hamiltonian \(H\) of the continuous optimal transport problem.  We say that \emph{the discrete optimal transport cost} between \(\mu\) and \(\nu\) is
\begin{align}\label{eq:discrete_dual_cost}
    \cK_D(\mu,\nu)\coloneq \max_{\Phi}
     \int_\Omega \Phi^{N_T} \, \ddr \Pi\nu - \int_\Omega  \Phi^{0} \, \ddr \Pi\mu
\end{align}
where \(\Phi \in \RR^{Q_D}\) runs over the discrete functions satisfying,
\begin{align}
    &\Phi^{i+1} -\cS(\Phi^i ) 
    \leqslant 0,\label{eq:discrete_HJ_inequality}
    & \forall i \in \{0,\ldots,N_T-1\}, \\
    &\left|\frac{\Delta^k_{+,j} \Phi^0}{\Delta x}\right|\leqslant R & \forall j \in \{ 0, \ldots, N_X-1 \}^d, \, \forall k \in \{ 1, \ldots, d \}. \label{eq:discrete_initial_Lipschitz}
\end{align}
If a discrete function \(\overline{\Phi}\) attains \(\cK_D(\mu,\nu)\), we call \(\overline{\Phi}\) \emph{an optimal potential} for the discrete problem. 
\end{definition}

\noindent This definition clearly mimics~\eqref{eq:dual} which is the continuous dual problem. At this level of generality, this is not necessarily a concave maximization problem. It would depend on the precise choice of the scheme $\cS$. We emphasize that our convergence result for the transport cost (Theorem \ref{th:convergence_of_cost}) holds even in the absence of concavity.

\begin{remark}[Break of the symmetry between $\mu$ and $\nu$]
This discrete optimal transport is in general not symmetric between $\mu$ and $\nu$, that is $\cK_D(\mu,\nu) \neq \cK_D(\nu,\mu)$ as can be seen both from the Hamilton-Jacobi constraint~\eqref{eq:discrete_HJ_inequality} and the constraint~\eqref{eq:discrete_initial_Lipschitz} on the initial potential. This is in contrast with other discretizations like~\cite{papadakis2014optimal,Lavenant2018} which are symmetric in $\mu$ and $\nu$. 
\end{remark}

\begin{remark}\label{rmk:largerR}
The threshold $R$ for clamping in Definition \ref{def:discrete_dynamic_optimal_transport} is allowed to be greather than $\on{Lip}(L,B_{\on{diam}(\Omega)})$ as far as the scheme $\cS$ is monotone. But choosing a larger $R$ makes room for the scheme to be monotone smaller (see~\eqref{eq:monotonicity_condition}), and makes the convergence a bit slower in the sense that the multiplicative constant in front of $\sqrt{h}$ in Theorem~\ref{th:convergence_of_cost} increases with $R$. Thus choosing a larger $R$ would make sense only if one does not have an exact access to $\on{Lip}(L,B_{\on{diam}(\Omega)})$. 
\end{remark}

\subsection{Discrete problem with vanishing viscosity}\label{sec:discrete_OT_vanishing_viscosity}

We have introduced a discretization of dynamic optimal transport. Notice it is a discrete counterpart of the dynamic \emph{dual} formulation. In order to obtain quantities such as optimal measures and optimal velocities, we need a \emph{primal} formulation as well. This will not be needed for our main convergence result about the transport cost (Theorem~\ref{th:convergence_of_cost}), but will be necessary for the convergence of optimizers (Theorem~\ref{th:convergence_of_optimizer}) to make sense. 

We focus on the vanishing viscosity scheme explained in Section \ref{eq:vanishing_viscosity_scheme} as this is a simple and practical example of schemes that make the discrete optimal transport a convex problem. With this scheme the constraint \eqref{eq:discrete_HJ_inequality} is written as
\begin{align*}
    \partial_{\Delta t}\Phi+H(\nabla_D \Phi) - \varepsilon \Delta_D\Phi\leqslant 0.
\end{align*}
which is a convex constraint.

\begin{remark}(Range of admissible parameters)
\label{rmk:rangle_admissible_parameters}
Let's focus on the case where $L$ (and thus $H$) are radial. Note that the scheme must be monotone on $[-R-\delta, R+\delta]$ with $\delta > 0$ and $R \coloneqq \operatorname{Lip}(L,B_{\operatorname{diam}(\Omega)})$. But as $\nabla L$ and $\nabla H$ are the inverse to each other, the constraint~\eqref{eq:monotonicity_condition} reads
\begin{equation*}
\frac{\operatorname{diam}(\Omega)}{2} < \frac{\varepsilon}{\Delta x} \leqslant \frac{\Delta x}{2 d\Delta t}.  
\end{equation*}
Interestingly it does \emph{not} depend on $L$. 
\end{remark} 

To obtain the primal formulation, we begin with writing the dual problem (Definition \ref{def:discrete_dynamic_optimal_transport}) more concretely for the vanishing viscosity scheme. To make the exposition simple, we introduce some notations. We fix \(\mu,\nu\in\cP(\Omega)\) in the rest of the section and define a functional 
\(F_D :  \RR^{Q_D}\to \RR\) by
\begin{align}
\label{eq:definition_FD}
    F_D \Phi = \int_\Omega \Phi^{N_T} \, \ddr \Pi\nu- \int_\Omega \Phi^0 \, \ddr \Pi\mu,
\end{align}
and set a constant \(R \geq  \operatorname{Lip}(L,B_{\operatorname{diam}(\Omega)})\). Next, we concatenate discrete differential operators as follows. We define \(T_D'\coloneqq\{0,\Delta t,\ldots, (N_T-1)\Delta t \}\) by dropping the last time step \(N_T=1\) from \(T_D\)  and define a subset of the entire discrete space \(Q_D\) by \(Q_D'\coloneqq  T_D' \times \Omega_D\). We then define an operator \(A=(A_t,A_x,A_R) : \RR^{Q_D}\to \RR^{Q'_D +d\times Q'_D+ d\times\Omega_D}\) by
\begin{align*}
    &A_t \Phi=(\partial_{\Delta t}^0\Phi-\varepsilon \Delta_D \Phi^0, \ldots, \partial_{\Delta t}^{N_T-1}\Phi- \varepsilon \Delta_D \Phi^{N_T-1})\in \RR^{Q_D'},\\
    &A_x\Phi=(\nabla_D \Phi^0, \ldots, \nabla_D \Phi^{N_T-1}) \in \RR^{d\times Q_D'},\\
    &A_R \Phi=(\Delta_{+}\Phi^0/\Delta x ) \in \RR^{d\times\Omega_D}.
\end{align*}
We rewrite the problem with these settings.
\begin{definition}[Dual problem with vanishing viscosity]\label{def:dual_problem_vanishing_viscosity}
    Let \(\mu,\nu\in \cP(\Omega)\), \(H\) be the Hamiltonian of the continuous problem, and \(\cS\) be the vanishing viscosity scheme. The dual formulation of discrete transport problem is defined as
    \begin{align}\label{eq:dual_cost_vanishing_viscosity}
        & \sup_\Phi F_D\Phi,
    \end{align}
    with constraints
    \begin{align}
        & A_t\Phi+H(A_x \Phi)\leqslant 0,  \label{eq:discrete_HJ_vanishing_viscosity}\\
        & |A_R \Phi| \leqslant R. \label{eq:forward_difference_bound_vanishing_viscosity}
    \end{align}
\end{definition}

To derive the expression of the dual of this problem, which we call the \emph{primal} problem, we use the method of Lagrange multipliers. We introduce a new variable $\Sigma$ in the dual which at optimality coincides with $A \Phi$, and take a Lagrange multiplier $\Lambda$ to enforce the constraint $A \Phi = \Sigma$. Specifically the problem reads as the saddle point problem
\begin{align}
\label{eq:saddle_point_vanishing_viscosity}
   \cK_D(\mu,\nu) = \sup_{\Phi, \Sigma} \inf_{\Lambda} \; \cL(\Phi, \Sigma, \Lambda),
\end{align}
with the functional 
\begin{align*}
    \cL(\Phi, \Sigma, \Lambda)= F_D \Phi - \indicator_{\Sigma_t+H(\Sigma_x)\leqslant 0} - \indicator_{|\Sigma_R|\leqslant R} - \Lambda \cdot (A\Phi-\Sigma).
\end{align*}
The newly introduced symbols are defined as follows. 
We set the variable  \(\Sigma\coloneqq (\Sigma_t, \Sigma_x, \Sigma_R)\in\RR^{Q'_D +d\times Q'_D+ d\times\Omega_D}\). Each indicator function \(\indicator\) takes \(0\) if the constraint is satisfied, and otherwise \(+ \infty\).  The variables \(\Lambda\coloneqq(\Lambda_\rho, \Lambda_m, \Lambda_\eta)\in \RR^{Q'_D +d\times Q'_D+ d\times\Omega_D} \) are the Lagrange multipliers. Finally by ``$\cdot$'' we denote the standard Euclidean product between vectors in $\RR^N$, with a dimension $N$ which should be clear from context. It should be interpreted as the integral of a discrete scalar or vector field by a discrete scalar or vector-valued measure respectively, which is just the summation of element-wise products. 

Now let us consider the formal exchange of the infimum and the supremum. 
By a direct computation, we have
\begin{align*}\label{eq:Lagrangian_computation}
\inf_{\Lambda} \sup_{\Phi,\Sigma} & \; \cL(\Phi, \Sigma, \Lambda) = \inf_{\Lambda} \sup_{\Phi,\Sigma} \; (F_D-A^\top\Lambda) \cdot \Phi    - \indicator_{\Sigma_t+H(\Sigma_x)\leqslant 0} - \indicator_{|\Sigma_R|\leqslant R} +  \Lambda \cdot \Sigma \nonumber\\
    &=\inf_\Lambda \sup_{\Phi, \Sigma_t, \Sigma_x} \; (F_D-A^\top\Lambda) \cdot \Phi - \indicator_{\Sigma_t+H(\Sigma_x)\leqslant 0} + \Lambda_\rho \cdot \Sigma_t + \Lambda_m \cdot \Sigma_x + R \cdot |\Lambda_\eta|
\end{align*}
where in the last line we have performed the maximization over $\Sigma_R$ by taking $\Sigma_R = R \Lambda_\eta / |\Lambda_\eta|$ elementwise. If $\Lambda_\rho$ is not element-wise non-negative, then the supremum in $\Sigma_t \to - \infty$ would yield $+ \infty$. But once we know $\Lambda_\rho \geqslant 0$, we see that the supremum in \(\Sigma\) is attained in the boundary of the convex constraints, namely \(\Sigma_t=-H(\Sigma_x)\).  When we take the supremum in $\Sigma_x$ we see that $\Lambda_\rho$ should be strictly positive if $\Lambda_m$ is non-zero and that in this case 
\begin{equation*}
\sup_{\Sigma_x} \;  \Lambda_m \cdot \Sigma_x- \Lambda_\rho \cdot H(\Sigma_x) =   L\left(\frac{\Lambda_m}{\Lambda_\rho}\right) \cdot \Lambda_\rho
\end{equation*} 
Note that in this case we also have at optimality 
\begin{equation}
\label{eq:link_v_phi_optimality_discrete}
\frac{\Lambda_m}{\Lambda_\rho} =  \nabla H (\Sigma_x)=\nabla H (\nabla_D \Phi)   
\end{equation}
for element-wise non-zero \(\Lambda_\rho\). Thus the problem boils down to 
\begin{equation*}
\inf_{\Lambda} \sup_{\Phi,\Sigma} \; \cL(\Phi, \Sigma, \Lambda) = \inf_\Lambda  L\left(\frac{\Lambda_m}{\Lambda_\rho}\right) \cdot \Lambda_\rho + R \cdot |\Lambda_\eta|  +  \sup_{\Phi} \; \left\{ (F_D-A^\top\Lambda) \cdot \Phi \right\}
\end{equation*}
Writing $R \cdot |\Lambda_\eta| = R \|\Lambda_\eta\|_{L^1(\Omega_D)}$ and interpreting $\Phi$ as a Lagrange multiplier we obtain the following minimization problem.  

\begin{definition}[Primal problem with vanishing viscosity]\label{def:primal_problem_vanishing_viscosity}
    Let \(\mu,\nu\in \cP(\Omega)\), \(L\) be the Lagrangian of the continuous problem, and \(\cS\) be the vanishing viscosity scheme. The primal formulation of discrete transport problem is defined as
    \begin{align} \label{eq:primal_cost_vanishing_viscosity}
      \inf_\Lambda \;   L\left(\frac{\Lambda_m}{\Lambda_\rho}\right) \cdot  \Lambda_\rho +R \|\Lambda_\eta\|_{L^1(\Omega_D)}
    \end{align}
    where \(\Lambda\) runs through discrete functions satisfying the discrete continuity equation,
    \begin{align}\label{eq:continuity_equation_vanishing_viscosity}
        A^\top\Lambda=F_D
    \end{align}
together with the element-wise constraints $\Lambda_\rho \geqslant 0$ and $\Lambda_m = 0$ as soon as $\Lambda_\rho = 0$.
    When \(\overline{\Lambda}=(\overline{\Lambda}_\rho, \overline{\Lambda}_m,\overline{\Lambda}_\eta )\) attains the minimum, we call \(\overline{\Lambda}_\rho\) an optimal measure and \(\overline{\Lambda}_m\) an optimal momentum.
\end{definition}

\begin{remark}
Note that~\eqref{eq:continuity_equation_vanishing_viscosity} is a discrete version of the continuity equation with a viscosity term. The temporal boundary conditions are encoded in the equation as expanding~\eqref{eq:continuity_equation_vanishing_viscosity} reads
\begin{equation*}
\begin{cases}
\partial_{-,\Delta t}\Lambda_\rho - \nabla_D ^\top \Lambda_m + \varepsilon \Delta_D \Lambda_\rho = 0, \\
\displaystyle{\Lambda_\rho^{-1}  = \Pi \mu - \frac{\Delta_{-}\Lambda_\eta}{\Delta x}} , \quad \Lambda_\rho^{N_T-1} = \Pi \nu. 
\end{cases}
\end{equation*}
Here $\nabla_D^\top $ is the adjoint of the discrete gradient $\nabla_D$ with respect to the standard Euclidean product, hence $-\nabla_D^\top $ is a discrete analogue of divergence. The operator $\partial_{-,\Delta t}$ is the backward discrete time derivative given as  $\partial^i_{-,\Delta t} \Lambda_\rho := (\Lambda_\rho^{i} - \Lambda_\rho^{i-1})/\Delta t$ for $i \in \{ 0,1, \ldots, N_T -1 \}$ provided that $\Lambda^{-1}_\rho$ is defined as in the second line above.

Thus it is tempting to think of the primal problem (Definition~\ref{def:primal_problem_vanishing_viscosity}) as the discretization of 
\begin{align*}
        \inf_{\rho, m} \int^1_0 \int_\Omega L\left( \frac{\ddr m}{\ddr \rho}\right) \, \ddr \rho  \ddr t \quad  \text{such that} \quad 
        \begin{cases}
        \partial_t \rho + \nabla \cdot m = - \varepsilon \Delta \rho\\
        \rho_0 =\mu, \quad \rho_1=\nu.
        \end{cases} 
    \end{align*}
Said differently: from the beginning the parameter $\varepsilon$ was interpreted in the dual problem as a regularization parameter, as it introduces a diffusive term $\varepsilon \Delta \overline{\varphi}$ in the Hamilton-Jacobi equation. The computation we made shows that, in the primal problem, the same parameter $\varepsilon$ also adds diffusion, but this time in the continuity equation which becomes $\partial_t \rho + \nabla \cdot m = - \varepsilon \Delta \rho$. 
In particular, the case of \(L(v)=|v|^2/2\) amounts to the entropic regularized optimal transport \cite{Gentil2015AboutTA}. 

However, this analogy is not perfect. First, because of the additional variable $\Lambda_\eta$ used to constrain the discrete gradient in the dual formulation, but also because $\varepsilon$ depends on $\Delta x$ and vanishes as $\Delta x$ goes to $0$. Nevertheless, this analogy explains why our discrete solutions would be slightly more smooth than the continuous solutions (and less and less as the stepsize decreases), something that we observe numerically in Section \ref{sec:numerics}. 
\end{remark}

We conclude this section by stating that our exchange between infimum and supremum was formal, but can be made rigorous.

\begin{proposition}[Strong duality of discrete optimal transport]\label{prop:no_duality_gap_vanishing_viscosity}
    There is no duality gap between the dual problem (Definition \ref{def:dual_problem_vanishing_viscosity}) and the primal problem (Definition \ref{def:primal_problem_vanishing_viscosity}) \ie we have 
    \begin{align*}
        F_D \overline{\Phi} = L\left(\frac{\overline{\Lambda}_m}{\overline{\Lambda}_\rho}\right)\cdot \overline{\Lambda}_\rho + R \left\|\overline{\Lambda}_\eta\right\|_{L^1(\Omega_D)},
    \end{align*}
    for a maximizer \(\overline{\Phi}\) of the dual problem and a minimizer \(\overline{\Lambda}\) of the primal problem.  
    \begin{proof}
        Slater's condition states that the strong duality holds if the feasible region has non-empty interior  \cite[Sections 5.2.3 \& 5.3.2]{Boyd2006convex}. 
        In our case, it suffices to show the existence of a strictly admissible competitor of the dual problem \ie \(\Phi\in \RR^{Q_D}\) satisfying,
        \begin{equation*}
            A_t\Phi+H(A_x \Phi)< 0, \qquad |A_R \Phi| < R. 
        \end{equation*}
        It is attained for instance by defining \(\Phi^i_j\coloneqq - i\Delta t H(0) -\epsilon \) with some \(\epsilon>0\).
    \end{proof}
\end{proposition}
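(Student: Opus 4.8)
The plan is to read Definition~\ref{def:dual_problem_vanishing_viscosity} as a genuine finite-dimensional convex program and Definition~\ref{def:primal_problem_vanishing_viscosity} as its Lagrangian dual, and then quote a textbook strong-duality theorem once a constraint qualification has been checked. The dual problem maximizes the linear functional $\Phi\mapsto F_D\Phi$ of~\eqref{eq:definition_FD} over $\Phi\in\RR^{Q_D}$ subject to~\eqref{eq:discrete_HJ_vanishing_viscosity} and~\eqref{eq:forward_difference_bound_vanishing_viscosity}; since $A_t,A_x,A_R$ are linear and the Hamiltonian $H$ is finite-valued and continuous on $\RR^d$ (both being consequences of the superlinearity of $L$), the map $\Phi\mapsto A_t\Phi+H(A_x\Phi)$ is convex componentwise and $\Phi\mapsto|A_R\Phi|$ is convex, so the feasible set is closed and convex. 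The saddle-point manipulation carried out in the text between~\eqref{eq:saddle_point_vanishing_viscosity} and~\eqref{eq:primal_cost_vanishing_viscosity} shows that $\sup_{\Phi,\Sigma}\inf_\Lambda\cL$ is the value $\cK_D(\mu,\nu)$ of Definition~\ref{def:dual_problem_vanishing_viscosity}, while $\inf_\Lambda\sup_{\Phi,\Sigma}\cL$ is the value of Definition~\ref{def:primal_problem_vanishing_viscosity}; hence ``no duality gap'' is exactly the equality $\sup_{\Phi,\Sigma}\inf_\Lambda\cL=\inf_\Lambda\sup_{\Phi,\Sigma}\cL$, i.e.\ strong duality for this program, together with attainment on both sides.

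The decisive step --- and essentially the only one requiring a computation --- is the constraint qualification, for which I would use Slater's condition: it suffices to exhibit one $\Phi$ satisfying both constraints \emph{strictly}. Take $\Phi$ affine in time and constant in space, $\Phi^i_j:=-\,i\,\Delta t\,\bigl(H(0)+\epsilon\bigr)$ for small $\epsilon>0$. Then $\nabla_D\Phi^i\equiv0$ and $\Delta_D\Phi^i\equiv0$ for every $i$, so $A_x\Phi=0$, $A_t\Phi=\partial_{\Delta t}\Phi=-(H(0)+\epsilon)$ componentwise, whence $A_t\Phi+H(A_x\Phi)=-\epsilon<0$ componentwise; and $A_R\Phi=\Delta_{+}\Phi^0/\Delta x=0$ because $\Phi^0$ is constant, so $|A_R\Phi|=0<R$ as $R>0$. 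With this strictly feasible point, the strong-duality theorem under Slater's condition (e.g.\ \cite[Sections 5.2.3 \& 5.3.2]{Boyd2006convex}) gives $\cK_D(\mu,\nu)=\sup_\Phi F_D\Phi=\inf_\Lambda\bigl\{L(\Lambda_m/\Lambda_\rho)\cdot\Lambda_\rho+R\|\Lambda_\eta\|_{L^1(\Omega_D)}\bigr\}$ and, in particular, that the infimum defining the primal problem is attained by some $\overline{\Lambda}$.

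It remains to observe that the supremum defining the dual problem is itself attained by some $\overline{\Phi}$, so that the stated identity makes sense. Here I would first remove the obvious degeneracy --- $F_D$ and all three constraints are unchanged when a global space-time constant is added to $\Phi$, since $\partial_{\Delta t},\Delta_D,\nabla_D,\Delta_{+}$ annihilate constants and $\Pi\mu,\Pi\nu$ are probability measures --- and then apply the direct method: after normalizing, say, $\Phi^0_0=0$, the constraint~\eqref{eq:forward_difference_bound_vanishing_viscosity} confines $\Phi^0$ to a compact subset of $\cC_R$, and since $F_D\Phi$ depends only on $\Phi^0$ and $\Phi^{N_T}$ --- the latter being controlled from above along the chain of inequalities~\eqref{eq:discrete_HJ_vanishing_viscosity} while any decrease of $\Phi^{N_T}$ only lowers $F_D$ --- a maximizing sequence can be reduced to one lying in a compact set, hence admits a subsequential limit, which is feasible by closedness and therefore optimal. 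I expect the genuine content of the argument to lie entirely in the verification of Slater's condition: the naive affine-in-time competitor with time-slope exactly $H(0)$ only turns~\eqref{eq:discrete_HJ_vanishing_viscosity} into an equality, so one must take a slightly steeper slope as above; beyond that, everything reduces to elementary finite-dimensional convex analysis.
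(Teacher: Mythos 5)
Your proposal is correct and follows essentially the same route as the paper: treat the dual of Definition~\ref{def:dual_problem_vanishing_viscosity} as a finite-dimensional convex program, verify Slater's condition with a competitor that is constant in space and affine in time, and invoke the textbook strong-duality theorem. One point in your favour: your competitor $\Phi^i_j=-i\,\Delta t\,(H(0)+\epsilon)$ is genuinely strictly feasible, whereas the paper's $\Phi^i_j=-i\,\Delta t\,H(0)-\epsilon$ as written only satisfies the Hamilton--Jacobi constraint~\eqref{eq:discrete_HJ_vanishing_viscosity} with equality (subtracting a global constant does not change the time differences), so the slope perturbation you insist on is exactly the right correction. Your additional paragraph on attainment of the supremum and infimum is not spelled out in the paper but is sound and harmless.
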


\section{Convergence of the optimal transport cost}
\label{sec:convergence_of_cost}

We now state and prove our main result on the quantitative convergence of the optimal transport cost. This result is valid for a general monotone and consistent scheme.

\begin{theorem}[Convergence of transport cost]\label{th:convergence_of_cost}
    Let \(\mu,\nu\in \cP(\Omega)\) and let \(\cK(\mu,\nu)\) and \(\cK_D(\mu,\nu)\) be the continuous and the discrete optimal transport cost; see respectively~\eqref{eq:dual} and Definition~\ref{def:discrete_dynamic_optimal_transport}. Then there is a constant \(C\) depending only on \(\Omega\), \(L\) and \(R\) such that
    \begin{align*}
       | \cK(\mu,\nu) - \cK_D(\mu,\nu) | \leqslant C \sqrt{h}.
    \end{align*}
\end{theorem}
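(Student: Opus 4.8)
The plan is to prove the two inequalities $\cK_D(\mu,\nu) \ge \cK(\mu,\nu) - C\sqrt h$ and $\cK_D(\mu,\nu) \le \cK(\mu,\nu) + C\sqrt h$ separately, in each case passing between the continuous dual problem~\eqref{eq:dual} and the discrete dual problem of Definition~\ref{def:discrete_dynamic_optimal_transport} through the finite-difference approximation theorem for Hamilton--Jacobi, Theorem~\ref{th:CandL_discrete_bound}. The only other ingredient is the elementary estimate $\left| \int_\Omega \Psi \, \ddr \sigma - \int_\Omega \Psi \, \ddr \Pi\sigma \right| \le \operatorname{Lip}(\Psi) \cdot \tfrac{\sqrt d}{2} \Delta x$ for Lipschitz $\Psi$ and $\sigma \in \cP(\Omega)$ (Lemma~\ref{lem:continuous discrete integral gap}), which bounds the error made when replacing $\mu,\nu$ by $\Pi\mu,\Pi\nu$; since $\Delta x$ is proportional to $h$ this term is $O(h)$, hence negligible against $\sqrt h$. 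Throughout one normalises all potentials by an additive constant, so that their $L^\infty$ norm is controlled by $\operatorname{Lip}(L,B_{\operatorname{diam}(\Omega)}) \operatorname{diam}(\Omega)$; this makes the constant produced by Theorem~\ref{th:CandL_discrete_bound} depend only on $\Omega$, $L$ and $R$, as claimed.

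For $\cK_D \ge \cK - C\sqrt h$, I would take the optimal continuous potential $\overline{\varphi}$ of Proposition~\ref{prop:bound_of_optimal_potential}; it satisfies $\operatorname{Lip}(\overline{\varphi}(t,\cdot)) \le \operatorname{Lip}(L,B_{\operatorname{diam}(\Omega)}) \le R$ for every $t$, so $\overline{\varphi}(0,\cdot)$ is $R$-Lipschitz in each coordinate. Define a discrete competitor by $\Phi^0_j := \overline{\varphi}(0, j\Delta x)$ and $\Phi^{i+1} := \cS(\Phi^i)$: it satisfies~\eqref{eq:discrete_HJ_inequality} with equality and~\eqref{eq:discrete_initial_Lipschitz} by construction, hence is admissible and $\cK_D(\mu,\nu) \ge F_D \Phi$. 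Since $\overline{\varphi}$ is exactly the viscosity solution of the Hamilton--Jacobi equation with initial datum $\overline{\varphi}(0,\cdot)$, Theorem~\ref{th:CandL_discrete_bound} gives $|\Phi^{N_T}_j - \overline{\varphi}(1, j\Delta x)| \le C\sqrt h$ for all $j$; combining this with the integral-gap estimate applied to $\overline{\varphi}(0,\cdot)$ and $\overline{\varphi}(1,\cdot)$ yields $F_D \Phi \ge \int_\Omega \overline{\varphi}(1,\cdot)\, \ddr\nu - \int_\Omega \overline{\varphi}(0,\cdot)\, \ddr\mu - C\sqrt h = \cK(\mu,\nu) - C\sqrt h$.

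For the reverse inequality, let $\overline{\Phi}$ be an optimal potential for the discrete problem and let $\varphi_0 : \Omega \to \RR$ be a continuous interpolant of $\overline{\Phi}^0$ that is $R$-Lipschitz in each coordinate — multilinear interpolation on the cells $B_{j\Delta x}$ works, since~\eqref{eq:discrete_initial_Lipschitz} forces each of its partial difference quotients to be a convex combination of quantities bounded by $R$. Let $\varphi$ be the viscosity solution of the Hamilton--Jacobi equation with initial datum $\varphi_0$; by the continuous analogue of the regularising effect (Remark~\ref{rmk:proof_regularizing_HJ}) $\varphi(t,\cdot)$ is $R$-Lipschitz in each coordinate, hence $\varphi$ is Lipschitz on $[0,1] \times \Omega$ and is therefore an admissible competitor in~\eqref{eq:dual}, so $\cK(\mu,\nu) \ge \int_\Omega \varphi(1,\cdot)\, \ddr\nu - \int_\Omega \varphi(0,\cdot)\, \ddr\mu$. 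The crucial step is a discrete comparison principle: writing $\widetilde{\Phi}^i := \cS^i(\overline{\Phi}^0)$ for the exact-scheme solution started from the same datum — which stays in $\cC_R$ by~\eqref{eq:regularizing_effect_HJ} — one shows $\overline{\Phi}^i \le \widetilde{\Phi}^i$ for all $i$ by induction on $i$, the inductive step being the inequality $\cS(\overline{\Phi}^i) \le \cS(\widetilde{\Phi}^i)$ whenever $\overline{\Phi}^i \le \widetilde{\Phi}^i$ with $\widetilde{\Phi}^i \in \cC_R$. Granting this, $\overline{\Phi}^{N_T}_j \le \widetilde{\Phi}^{N_T}_j \le \varphi(1, j\Delta x) + C\sqrt h$ by Theorem~\ref{th:CandL_discrete_bound} applied to $\varphi$, and inserting this bound together with $\overline{\Phi}^0_j = \varphi_0(j\Delta x)$ and the integral-gap estimate into $F_D \overline{\Phi}$ gives $\cK_D(\mu,\nu) = F_D \overline{\Phi} \le \int_\Omega \varphi(1,\cdot)\, \ddr\nu - \int_\Omega \varphi(0,\cdot)\, \ddr\mu + C\sqrt h \le \cK(\mu,\nu) + C\sqrt h$.

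I expect the discrete comparison principle to be the main obstacle. Its subtlety is that $\overline{\Phi}^i$ for $i \ge 1$ is not controlled by~\eqref{eq:discrete_initial_Lipschitz} and need not lie in $\cC_{R+\delta}$, so the monotonicity of $\cS$, which is only assumed on $\cC_{R+\delta}$, cannot be invoked naively to compare $\cS(\overline{\Phi}^i)$ with $\cS(\widetilde{\Phi}^i)$. For the vanishing-viscosity scheme the argument can nonetheless be closed: expanding $\cS(\widetilde{\Phi}^i)_j - \cS(\overline{\Phi}^i)_j$, bounding the increment of $H$ from above using only the convexity of $H$ and the bound $|\nabla_D \widetilde{\Phi}^i| \le R$ on the \emph{larger} function, and using the lower bound in the monotonicity condition~\eqref{eq:monotonicity_condition} to absorb the resulting term into the non-negative discrete-diffusion contribution, shows the difference is non-negative without any a priori control on $\overline{\Phi}^i$. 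An analogous one-sided estimate — relying on the non-expansiveness built into a consistent monotone difference scheme together with the bound on the larger function — is what is needed to handle a general scheme, and carrying this out cleanly at the stated level of generality is the delicate point of the proof.
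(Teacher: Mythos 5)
Your proof is correct and follows essentially the same route as the paper's: each one-sided inequality is obtained by converting an optimizer of one problem into a competitor of the other through the Hamilton--Jacobi evolution (running the scheme from the sampled continuous potential in one direction, and solving the continuous equation from the interpolated discrete initial datum in the other), and then invoking Theorem~\ref{th:CandL_discrete_bound} together with Lemma~\ref{lem:continuous discrete integral gap}. The one point where you go beyond the paper is the comparison $\overline{\Phi}^i \leqslant \widetilde{\Phi}^i$: the paper simply asserts it ``due to the inequality constraint~\eqref{eq:discrete_HJ_inequality} and the monotonicity of the scheme'', while you correctly observe that for $i \geqslant 1$ the subsolution $\overline{\Phi}^i$ is not constrained to lie in $\cC_R$, so monotonicity of $\cS$ on $\cC_{R+\delta}$ cannot be applied verbatim. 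Your one-sided fix for the vanishing-viscosity scheme is valid: writing $u = \widetilde{\Phi}^i - \overline{\Phi}^i \geqslant 0$, convexity of $H$ and the gradient bound on the \emph{larger} function express $\cS_j(\widetilde{\Phi}^i) - \cS_j(\overline{\Phi}^i)$ as bounded below by a non-negative combination of the values of $u$, though note that you need \emph{both} sides of~\eqref{eq:monotonicity_condition} (the lower bound on $\varepsilon/\Delta x$ for the coefficients of $u_{j\pm e_k}$ and the upper bound for the coefficient of $u_j$). Your closing caveat is also fair: at the level of generality of Definition~\ref{def:discrete_dynamic_optimal_transport}, this one-sided comparison is the delicate step, and the paper's proof does not spell it out.
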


To make the exposition of the proof simple, let us introduce the following notations: for a function \(\varphi : Q\to \RR\) we define the functionals $F$ and $F_D$ via
\begin{equation*}
F \varphi \coloneq
     \int_{\Omega} \varphi(1,\cdot) \, \ddr \nu  - \int_{\Omega} \varphi(0,\cdot) \,  \ddr \mu, \qquad
F_D \varphi \coloneq
    \int_{\Omega} \varphi(1,\cdot) \, \ddr \Pi\nu  - \int_{\Omega} \varphi(0,\cdot)  \, \ddr \Pi\mu.     
\end{equation*}
Our goal is to control the gap between \(F\overline{\varphi}\) and \(F_D\overline{\Phi}\) for the continuous and discrete optimal potentials  \(\overline{\varphi}\) and \(\overline{\Phi}\) respectively.
However, the relation between \(\overline{\varphi}\) and \(\overline{\Phi}\) is unclear as they are solutions to two different problems. To circumvent this issue, we replace them with functions that can be easily compared with each other.
For this purpose, we exploit not only a solution to the discrete Hamilton-Jacobi equation, but also a viscosity solution to the continuous Hamiltonian-Jacobi equation. Before proving the theorem, we review elementary results about errors caused by discretizing measures. 

\begin{lemma}[Gap between continuous and discrete measures]\label{lem:continuous discrete integral gap}
    Let \(\varphi:Q\to \RR\) be a Lipschitz function. Then we have a control, 
    \begin{align*}
        |F\varphi-F_D\varphi|\leqslant \frac{\sqrt{d}}{2} \Delta x \left( \operatorname{Lip}(\varphi(0,\cdot)) + \operatorname{Lip}(\varphi(1,\cdot)) \right).
\end{align*}
\end{lemma}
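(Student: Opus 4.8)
The plan is to estimate the discretization error separately at the two time slices $t=0$ and $t=1$, since $F\varphi - F_D\varphi = \bigl(\int_\Omega \varphi(1,\cdot)\,\ddr\nu - \int_\Omega \varphi(1,\cdot)\,\ddr\Pi\nu\bigr) - \bigl(\int_\Omega \varphi(0,\cdot)\,\ddr\mu - \int_\Omega \varphi(0,\cdot)\,\ddr\Pi\mu\bigr)$, and bound each parenthesized term by $\frac{\sqrt d}{2}\Delta x\,\operatorname{Lip}(\varphi(\tau,\cdot))$ for $\tau\in\{0,1\}$; the triangle inequality then gives the claimed bound. So it suffices to prove the one-slice estimate: for any Lipschitz $\psi:\Omega\to\RR$ and any $\mu\in\cP(\Omega)$,
\begin{equation*}
\left|\int_\Omega \psi\,\ddr\mu - \int_\Omega \psi\,\ddr\Pi\mu\right| \leqslant \frac{\sqrt d}{2}\,\Delta x\,\operatorname{Lip}(\psi).
\end{equation*}

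The key step is to unpack the definition of $\Pi\mu = \sum_j \mu(B_{j\Delta x})\,\delta_{j\Delta x}$ from~\eqref{eq:discrete_probability_measure}. Since the half-open boxes $\{B_{j\Delta x}\}_j$ partition $\Omega$ and $\mu$ is a probability measure, I would write
\begin{equation*}
\int_\Omega \psi\,\ddr\mu - \int_\Omega \psi\,\ddr\Pi\mu = \sum_j \int_{B_{j\Delta x}} \bigl(\psi(x) - \psi(j\Delta x)\bigr)\,\ddr\mu(x).
\end{equation*}
On each box $B_{j\Delta x}$, every point $x$ is within distance $\tfrac{\sqrt d}{2}\Delta x$ of the center $j\Delta x$ (the box has edge length $\Delta x$, so its half-diagonal is $\tfrac{\sqrt d}{2}\Delta x$; distances on the torus are bounded by Euclidean distances of representatives, and for $\Delta x$ small enough this is exactly the torus distance). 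Hence $|\psi(x) - \psi(j\Delta x)| \leqslant \operatorname{Lip}(\psi)\cdot\tfrac{\sqrt d}{2}\Delta x$ on $B_{j\Delta x}$, and summing $\int_{B_{j\Delta x}}\ddr\mu = \mu(B_{j\Delta x})$ over $j$ gives total mass $1$, yielding the bound. Applying this with $\psi = \varphi(0,\cdot)$, $\mu$ and with $\psi = \varphi(1,\cdot)$, $\nu$ and adding finishes the lemma.

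There is essentially no serious obstacle here — the argument is a textbook quantization estimate. The only mild point requiring care is the torus geometry: one should confirm that the diameter of a box $B_{j\Delta x}$, measured with the quotient metric on $\Omega = \RR^d/(D\ZZ^d)$, is indeed at most $\sqrt d\,\Delta x$ (so the distance from center to any point is at most $\tfrac{\sqrt d}{2}\Delta x$); this is immediate since the quotient metric is dominated by the Euclidean metric on representatives, and the box sits inside a Euclidean ball of that radius. One should also note the Lipschitz constants $\operatorname{Lip}(\varphi(0,\cdot))$ and $\operatorname{Lip}(\varphi(1,\cdot))$ are finite because $\varphi$ is assumed Lipschitz on all of $Q$, so its restriction to each time slice is Lipschitz with a constant no larger than the global one. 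I would keep the exposition to a few lines, since the estimate will be invoked repeatedly in the proof of Theorem~\ref{th:convergence_of_cost}.
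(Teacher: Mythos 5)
Your proof is correct and is essentially the same as the paper's: the paper phrases the box-by-box estimate as a bound $W_1(\Pi\mu,\mu)\leqslant \sqrt{d}\,\Delta x/2$ combined with Kantorovich--Rubinstein duality, whereas you write out the underlying coupling (each point of $B_{j\Delta x}$ sent to its center) explicitly, but the geometric and analytic content is identical.
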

    
\begin{proof}
First note that the Wasserstein-1 distances \( W_1(\Pi\mu,\mu)\) and \( W_1(\Pi\nu,\nu)\) are bounded by \(\sqrt{d} \Delta x/2\) since each mass moves up to \(\sqrt{d} \Delta x/2\) via discretization. The claim follows from the fact that,
 for any Lipschitz function \(f:\Omega\to \RR \) and \(\rho_1, \rho_2\in \cP(\Omega)\) we have,
\begin{equation*}
    \int_\Omega f \, \ddr (\rho_1-\rho_2)\leqslant \operatorname{Lip}(f) W_1(\rho_1,\rho_2). \qedhere
\end{equation*}
\end{proof}

\begin{lemma}[Gap between discrete integral of functions]\label{lem:discrete integral bound}
    The functional \(F_D\) is \(2\)-Lipschitz with respect to \(L^\infty(Q_D)\) norm.
    \begin{proof}
        The claim follows from the definition as $\Pi \mu$, $\Pi \nu$ are probability measures. 
    \end{proof}
\end{lemma}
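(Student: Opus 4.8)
The plan is to unwind the definition of $F_D$ given in~\eqref{eq:definition_FD} and to exploit linearity together with the fact that $\Pi\mu$ and $\Pi\nu$ are probability measures of unit mass. Concretely, for two discrete functions $\Phi, \Phi' \in \RR^{Q_D}$, I would first use that $F_D$ is linear in its argument to write
\begin{equation*}
F_D \Phi - F_D \Phi' = \int_\Omega (\Phi^{N_T} - {\Phi'}^{N_T}) \, \ddr \Pi\nu - \int_\Omega (\Phi^0 - {\Phi'}^0) \, \ddr \Pi\mu.
\end{equation*}

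Next, I would apply the triangle inequality to separate the two endpoint contributions and bound each one individually. For the terminal term, since $\Pi\nu$ is a probability measure, the integral of a discrete function against it is bounded in absolute value by the sup-norm of that function over $\Omega_D$, and this in turn is controlled by the norm over the whole space-time grid:
\begin{equation*}
\left| \int_\Omega (\Phi^{N_T} - {\Phi'}^{N_T}) \, \ddr \Pi\nu \right| \leqslant \|\Phi^{N_T} - {\Phi'}^{N_T}\|_{L^\infty(\Omega_D)} \leqslant \|\Phi - \Phi'\|_{L^\infty(Q_D)},
\end{equation*}
the second inequality holding because the maximum over the single time slice $\{N_T\} \times \Omega_D$ is no larger than the maximum over all of $Q_D$. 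The identical argument, with $\Pi\mu$ in place of $\Pi\nu$ and the initial slice $i=0$ in place of $i = N_T$, bounds the second term by the same quantity.

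Combining the two estimates yields $|F_D \Phi - F_D \Phi'| \leqslant 2 \|\Phi - \Phi'\|_{L^\infty(Q_D)}$, which is exactly the claim. There is no genuine obstacle here: the statement is a direct consequence of the unit mass of $\Pi\mu$ and $\Pi\nu$, and the factor $2$ is simply the sum of the two endpoint contributions, each of which is controlled by a single copy of the $L^\infty(Q_D)$ norm. The only point requiring minor care is to note that the two marginals are evaluated at different time slices, so no cancellation between them can be expected and the constant $2$ cannot be improved in general.
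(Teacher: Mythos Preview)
Your proof is correct and follows exactly the approach the paper indicates: you simply expand in detail what the paper compresses into a single sentence, namely that integrating against the probability measures $\Pi\mu$ and $\Pi\nu$ contributes at most one copy each of the $L^\infty(Q_D)$ norm.
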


\begin{proof}[\textbf{Proof of Theorem \ref{th:convergence_of_cost}}]
Let \(\overline{\varphi}\) and \(\overline{\Phi}\) be respectively the solutions to the continuous and discrete problem. Note that we can take $\overline{\varphi}$ to satisfy the Lipschitz bound~\eqref{eq:bound_of_optimal_potential} thanks to Proposition~\ref{prop:bound_of_optimal_potential}. One the other hand $\overline{\Phi}$ satisfies a similar bound~\eqref{eq:discrete_initial_Lipschitz} by design. 

We first show \( F_D\overline{\Phi} \leqslant F\overline{\varphi} + C \sqrt{h}\). Let us take the solution  to the discrete initial value problem,
    \begin{align*}
\begin{cases}
\widetilde{\Phi}^{i+1}=\cS(\widetilde{\Phi}^i), \\
 \widetilde{\Phi}^{0}= \overline{\Phi}^0.
\end{cases}    
\end{align*} 
Note that $\overline{\Phi}$ and $\widetilde{\Phi}$ have the same initial data, and that \(\overline{\Phi}\leqslant \widetilde{\Phi}\) due to the inequality constraint \eqref{eq:discrete_HJ_inequality} and the monotonicity of the scheme,
hence \(F_D \overline{\Phi} \leqslant F_D \widetilde{\Phi}\). To estimate $F_D \widetilde{\Phi}$ let us consider $\widetilde{ \varphi}$ the unique viscosity solution to the the continuous initial value problem
    \begin{equation*}
    \begin{cases}
        \partial_t \widetilde{ \varphi} + H(\nabla \widetilde{ \varphi})=0, \\
         \widetilde{ \varphi} (0,\cdot) = \operatorname{LI}\left(\overline{\Phi}^0\right),
    \end{cases}
    \end{equation*}
    where  \( \operatorname{LI}\) gives the piecewise linear interpolation of a discrete function.  When \(d=1\), it is given for a discrete space function \(\Psi\) by 
    \begin{align*}
        \operatorname{LI}(\Psi)(x)
        \coloneq \frac{(j+1)\Delta x-x}{\Delta x}\Psi_j + \frac{x-j\Delta x}{\Delta x} \Psi_{j+1}, \quad x\in [j\Delta x, (j+1)\Delta x].
    \end{align*}
    For higher dimensions, it is given by bilinear interpolation, trilinear interpolation, and so on. As $\overline{\Phi}^0 \in \cC_R$ we can check that $\widetilde{\varphi}(0,\cdot)$ is $R$-Lipschitz in each coordinate and moreover $\operatorname{Lip}(\widetilde{\varphi}(0,\cdot)) \leqslant \sqrt{d} R$. Thus in particular $\operatorname{Lip}(\widetilde{\varphi}(t,\cdot)) \leqslant \sqrt{d} R$ for any $t \in [0,1]$ (see Theorem~\ref{th:CandL_viscosity_solution}). As $\widetilde{\varphi}$ is an admissible competitor, we have $F \widetilde{\varphi} \leqslant F\overline{\varphi}$. Thus we see 
\begin{equation*}
F_D\overline{\Phi} \leqslant F_D\widetilde{\Phi} \leqslant F_D\widetilde{\Phi} + F\widetilde{\varphi} - F\widetilde{\varphi}   \leqslant F\overline{\varphi} + (F_D\widetilde{\Phi} - F_D\widetilde{\varphi}) + (F_D\widetilde{\varphi} - F\widetilde{\varphi}).  
\end{equation*}
For the term $F_D\widetilde{\Phi} - F_D\widetilde{\varphi}$ we use here Lemma~\ref{lem:discrete integral bound} followed by Theorem~\ref{th:CandL_discrete_bound}, as our constraint~\eqref{eq:discrete_initial_Lipschitz} guarantees that $\widetilde{\varphi}(0,\cdot)$ is $R$-Lipschitz in each coordinate:
    \begin{align*}
        F_D\widetilde{\Phi}- F_D\widetilde{\varphi}
        \leqslant 2\|\widetilde{\Phi}-\widetilde{\varphi}\|_{L^\infty(Q_D)} \leqslant C\sqrt{h}.
    \end{align*}
    On the other hand, it follows from Lemma~\ref{lem:continuous discrete integral gap} and the regularization effect of the continuous Hamilton-Jacobi equation (Theorem \ref{th:CandL_viscosity_solution}) that
    \begin{align*}
        F_D\widetilde{\varphi}- F\widetilde{\varphi}
        \leqslant \frac{\sqrt{d}\Delta x}{2}\left(\operatorname{Lip}(\widetilde{\varphi}(1,\cdot)) + \operatorname{Lip}(\widetilde{\varphi}(0,\cdot)) \right)
        \leqslant  \sqrt{d} \Delta x  \operatorname{Lip}(\widetilde{\varphi}(0,\cdot)) \leqslant C h,
    \end{align*}
hence we obtained the claimed inequality as $C$ can be taken independent on $\mu, \nu$ thanks to~\eqref{eq:discrete_initial_Lipschitz}.

For the other inequality \( F\overline{\varphi}\leqslant F_D\overline{\Phi} + C \sqrt{h}\), we start from an optimal potential $\overline{\varphi}$ to the continuous dual transport problem which satisfies the estimate~\eqref{eq:bound_of_optimal_potential} and is a viscosity solution to the continuous Hamilton-Jacobi equation. To transform it into a discrete competitor we consider the discrete system
    \begin{equation*}\label{eq:IVP_from_continuous_optimal_potential}
        \begin{cases}
            \widetilde{\Phi}^{i+1}=\cS(\widetilde{\Phi}^i), \\
             \widetilde{\Phi}^{0}_j= \overline{\varphi}(0,j\Delta x),
        \end{cases} 
    \end{equation*}
using the point sample of the continuous potential \(\overline{\varphi}\) as initial data.
Then its solution \(\widetilde{\Phi}\) is an admissible competitor for the discrete transport problem: this is because the discrete constraint~\eqref{eq:discrete_initial_Lipschitz} is automatically satisfied thanks to~\eqref{eq:bound_of_optimal_potential}. Thus we have \(F_D\widetilde{\Phi}\leqslant F_D\overline{\Phi} \) that we use in 
\begin{equation*}
F\overline{\varphi} = F\overline{\varphi} - F_D \overline{\varphi} + F_D \overline{\varphi} + F_D \widetilde{\Phi} - F_D \widetilde{\Phi} \leqslant F_D\overline{\Phi} + (F\overline{\varphi} - F_D \overline{\varphi}) + (F_D \overline{\varphi} - F_D \widetilde{\Phi}).    
\end{equation*}
In a similar way, we have $F\overline{\varphi} - F_D \overline{\varphi} \leqslant Ch$ thanks to Lemma~\ref{lem:continuous discrete integral gap} and the bound~\eqref{eq:bound_of_optimal_potential}. On the other hand $F_D \overline{\varphi} - F_D \widetilde{\Phi} \leqslant C \sqrt{h}$: for this estimate we rely again on Lemma~\ref{lem:discrete integral bound} followed by Theorem~\ref{th:CandL_discrete_bound}. 
\end{proof}

\begin{remark}[Optimal transport on a bounded domain]
\label{rmk:optimal_transport_domain}
We built a discrete formulation on the spatial domain \(\Omega= \RR^d / (D \ZZ^d)\) to avoid boundary conditions, and we prove our convergence results in this setting. At least from a theoretical point of view, given measures $\mu, \nu$ with compact support, it is possible to embed them in $\RR^d / (D \ZZ^d)$ with $D$ large enough such that no mass crosses the ``periodic boundary'', and thus the optimal transport on the torus between $\mu$ and $\nu$ coincides with the optimal transport on $\RR^d$. However from a numerical point of view this is problematic, as most of the domain $\Omega$ would be empty, and thus most of the nodes of the discretized domain $\Omega_D$ do not record any motion of mass. This leads to a lot of wasted computational resources.  

One way to address this issue may be to use a discretization of the Hamilton-Jacobi equation on a bounded domain. We should then impose normal boundary conditions $\partial \varphi / \partial n = 0$, being $n$ the outward normal to the domain. We refer to \cite{Rouy1992Neumann} and \cite{abgrall2003numerical} for discussions of possible discretization. However, at this point the challenge would be to adapt the proof of Theorem~\ref{th:convergence_of_cost}. As we discussed in Remark~\ref{rmk:proof_regularizing_HJ}, the main problem is that the regularizing effect~\eqref{eq:regularizing_effect_HJ} is no longer available automatically in such schemes, so that even clamping the value of the gradient at the initial time is not enough. 

We leave for future work the proposal of a discretization of dynamic optimal transport via its dual formulation which would also handle the case of convex domains of $\RR^d$. 
\end{remark}

\begin{remark}
By looking into the proof of Theorem \ref{th:convergence_of_cost}, we see that a quantitative convergence of the transport cost in our discrete problem is provable if, for the chosen scheme $\cS$, a) solutions of the discrete Hamilton-Jacobi equation converge to continuous viscosity solutions at a known rate, and b) the discrete solutions have controlled Lipschitz constants at time $t= 0$ and $t=1$. 
This may leave the room for other scheme.
The challenge though would be to design an optimization method to solve the resulting discrete problem. Typically, certain types of upwind schemes define $\cS$ piecewise, making it challenging to optimize over constraints such as $\Phi^{i+1} -\cS(\Phi^i ) \leq 0$. We stick to the vanishing viscosity scheme because of the simplicity of its implementation. 
\end{remark}

\section{Convergence of the optimizers}
\label{sec:convergence_optimizers}
We can also obtain a quantitative convergence of solutions to discrete problems using the convergence of transport cost we showed in the previous section. Note that if the functionals to optimize were uniformly strictly convex, then this would be a direct consequence of the convergence of the value, that is, of the transport cost. This is not possible here: the functional in the dual problem~\eqref{eq:dual} is only linear in $\varphi$! Actually without further assumptions on $\mu, \nu$ there is not necessarily a unique solution to the continuous problem so it is very unlikely we can prove any convergence of the optimizers.

Thus in this section we impose additional assumptions that are typically satisfied when the measures $\mu, \nu$ have some smoothness. Then, by a (now classical) study of the duality gap between the primal and dual problem, it is possible to recover some convergence of the optimizers.   

\paragraph*{Assumption on the scheme}
We give our result specifically for the discrete problem with the vanishing viscosity scheme we introduced in Section \ref{sec:discrete_OT_vanishing_viscosity}. At this point we are not aware of the exact conditions for extending the results to a general scheme as it requires also the primal formulation of the discrete problem.

\paragraph*{Assumption on the Lagrangian and the Hamiltonian}
We suppose that the Lagrangian \(L\) and the Hamiltonian \(H\) satisfy for any \(v,w\in \RR^d\),
\begin{align}\label{eq:improved_Young}
    L(v)+H(w)\geqslant v\cdot w + |f_L(v)-f_H(w)|^2,
\end{align}
for some functions \(f_L,f_H : \RR^d\to \RR^d\). This is an improvement of Young's inequality \( L(v)+H(w)\geqslant v\cdot w\) and we refer to \cite{santambrogio2018regularity} for a discussion of this assumption, including the link with Bregman divergences. We refer to two  examples of such \(L\) and \(H\) from \cite[Lemma 3.3 and Lemma 3.2]{santambrogio2018regularity}. 
If \(L\) is  uniformly convex  \ie \(D^2L \geqslant \lambda I\) for some \(\lambda >0\), we have
\begin{align*}
    L(v)+H(w)\geqslant v\cdot w + \frac{\lambda}{2}|v- \nabla H(w)|^2.
\end{align*} 
On the other hand, for \(L(v)=|v|^p/p\) and  \(H(w)=|w|^q/q\) with some \(p,q> 1\) such that \(1/p+1/q=1\), we have
\begin{align*}
    L(v)+H(w)\geqslant v\cdot w + \frac{1}{2\max \{p,q\}}|v^{p/2}-w^{q/2}|^2,
\end{align*}
\label{page_vector_power}
where the power of a vector is defined as \(v^p= v|v|^{p-1}\). Note in any case, as we know that there is equality in Young's inequality when $v = \nabla H(w)$, that we must necessarily have for $w \in \RR^d$:
\begin{equation}
\label{eq:link_fL_fH}
f_H(w) = f_L(\nabla H(w)).
\end{equation} 

\paragraph*{Assumption on the optimal potential}
Finally, we assume that an optimal potential of the continuous dual problem \(\overline{\varphi}\) is of class \(C^{1,1}\) on $[0,1] \times \Omega$. That is, \(\overline{\varphi}\) is time-space differentiable and its derivatives are time-space Lipschitz. If the measures $\mu, \nu$ are in $\RR^d$, by Caffarelli's regularity theory a sufficient condition for this is that they are supported on a uniformly convex \(C^2\) domains \(X_\mu, X_\nu\) and have Lebesgue densities \(f_\mu, f_\nu\) that are H\"older continuous and bounded from below by a strictly positive constant on \(\overline{X_\mu}, \overline{X_\nu}\) respectively \cite[Theorem 4.14]{villani2003topics}. On the periodic domain \(\Omega\), it is still the case if \(X_\mu, X_\nu\) are small enough and close to each other so that the situation reduces to the case of \(\Omega=\RR^d\) and the optimal velocity of mass \(\overline{v}=\nabla H(\nabla \overline{\varphi})\) has no discontinuity on the support of the measures. Alternatively on the torus a sufficient condition for \(\overline{\varphi} \) of class \(C^{1,1}\) is \(f_\mu, f_\nu\) being nowhere vanishing and H\"older continuous \cite[Theorem 1]{Cordero1999periodicOT}, \cite[Theorem 2.2 (iii)]{AmbrosioColomboGuidoFigalli2011semigeostrophic}.

Such a condition on \(\mu,\nu\) is still weaker than the one required for similar results in the recent work \cite{NataleTodeschi2021finitevolume}, which has to assume that the optimal density $\overline{\rho}$ solving the continuous primal problem is uniformly bounded from below in the entire domain $Q$.

\paragraph{Statement of the result}
Now we state our convergence result for optimizers. We quantify the result in terms of the discrete $L^2$ norm  \(\| \cdot\|_{L^2_{\overline{\Lambda}_\rho}(Q'_D)}\) on $Q'_D$ given by
\begin{equation*}
    \| M \|^2_{L^2_{\overline{\Lambda}_\rho}(Q_D')}= \sum_{z\in Q_D'} |M_z|^2 {\overline{\Lambda}_{\rho,z}}
\end{equation*}
for \(M\in \RR^{d\times Q_D'}\): it is norm weighted by $\overline{\Lambda}_\rho$ which is solution to the discrete primal problem (see Definition~\ref{def:primal_problem_vanishing_viscosity}). Eventually, for a continuous function $\varphi$ defined on our domain $Q$, we denote by $\Pi \varphi$ the function defined on $Q_D$ which corresponds to the pointwise evaluation of the function on grid points, that is, \((\Pi \varphi)^i_j=\varphi(i\Delta t, j\Delta x)\). 

\begin{theorem}[Norm convergence of optimizers]\label{th:convergence_of_optimizer}
    Suppose that the Lagrangian \(L\) and the Hamiltonian \(H\) of the continuous problem satisfy the inequality \eqref{eq:improved_Young} with some functions \(f_L\) and \(f_H\). Let \(\overline{\varphi}\) and \(\overline{\Phi}\) be optimal potentials for the continuous and the discrete dual problems, and let  \(\overline{\Lambda}=(\overline{\Lambda}_\rho,\overline{\Lambda}_m,\overline{\Lambda}_\eta)\) be an optimizer for the discrete primal problem. Assume that \(\overline{\varphi}\in C^{1,1}(Q)\). Then we have estimate,
    \begin{align}\label{eq:estimate_for_potential}
        \left\|f_H(\nabla_D \overline{\Phi})- f_H(\nabla_D \Pi\overline{\varphi})\right\|^2_{L^2_{\overline{\Lambda}_\rho}(Q'_D)}\leqslant C\sqrt{h},
    \end{align}
    where the constant \(C\) depends only on \(\operatorname{diam}(\Omega)\), \(L\) and \(R\). If \(f_H\) is Lipschitz on bounded sets we also have an estimate
    \begin{align}\label{eq:estimate_for_velocity}
        \left\|f_L(\overline{V})- f_L(\overline{v})\right\|^2_{L^2_{\overline{\Lambda}_\rho}(Q'_D)}\leqslant C\sqrt{h},
    \end{align}
    where $\overline{v} = \nabla H(\nabla \overline{\varphi})$ is the optimal velocity for an optimal pair \((\overline{\rho},\overline{v})\) of the continuous problem and the optimal discrete velocity \(\overline{V}\coloneqq {\overline{\Lambda}_m}/{\overline{\Lambda}_\rho}\) defined on \(\operatorname{support}(\overline{\Lambda}_\rho)\).
\end{theorem}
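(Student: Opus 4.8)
The plan is to fold the strong duality of the discrete problem (Proposition~\ref{prop:no_duality_gap_vanishing_viscosity}), the cost convergence (Theorem~\ref{th:convergence_of_cost}) and the improved Young inequality~\eqref{eq:improved_Young} into a single duality-gap estimate, which I then apply twice: once to the discrete optimizer $\overline{\Phi}$ and once to the sampled continuous potential $\Pi\overline{\varphi}$. Throughout I take $\overline{\varphi}$ to be \emph{the} optimal dual potential furnished by Proposition~\ref{prop:bound_of_optimal_potential}, so that $\operatorname{Lip}(\overline{\varphi}(t,\cdot)) \leqslant \operatorname{Lip}(L,B_{\operatorname{diam}(\Omega)}) \leqslant R$ for all $t$; under our hypothesis this $\overline{\varphi}$ is in addition $C^{1,1}(Q)$, hence, being $C^1$ and a viscosity subsolution, a classical subsolution: $\partial_t\overline{\varphi}+H(\nabla\overline{\varphi})\leqslant 0$ everywhere. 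Write $\overline{V}=\overline{\Lambda}_m/\overline{\Lambda}_\rho$ on $\operatorname{support}(\overline{\Lambda}_\rho)$, with the usual convention $\overline{\Lambda}_m=\overline{V}\overline{\Lambda}_\rho$ globally. Two preliminary facts are needed. (A) A consistency estimate: since $\overline{\varphi}\in C^{1,1}$, Taylor-expanding the forward time difference and the centered space difference produces errors $O(\Delta t)$ and $O(\Delta x)$, the second differences of $\overline{\varphi}$ are bounded so that $\varepsilon\,\Delta_D\Pi\overline{\varphi}=O(\varepsilon)=O(\Delta x)$ because $\varepsilon\propto\Delta x\propto h$ inside the monotonicity window~\eqref{eq:monotonicity_condition}, and $H$ is Lipschitz on bounded sets; altogether $A_t(\Pi\overline{\varphi})+H(A_x\Pi\overline{\varphi})=\partial_t\overline{\varphi}+H(\nabla\overline{\varphi})+O(h)\leqslant Ch$ pointwise on $Q_D'$. (B) Testing the discrete continuity equation~\eqref{eq:continuity_equation_vanishing_viscosity} against the discrete function $(i,j)\mapsto i\Delta t$ gives $\|\overline{\Lambda}_\rho\|_{L^1(Q_D')}=1$.

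Next I would establish the master inequality: for every $\Psi\in\RR^{Q_D}$ with $|A_R\Psi|\leqslant R$ and $A_t\Psi+H(A_x\Psi)\leqslant\delta$ pointwise for some $\delta\geqslant 0$,
$\|f_L(\overline{V})-f_H(\nabla_D\Psi)\|^2_{L^2_{\overline{\Lambda}_\rho}(Q_D')}\leqslant \big(F_D\overline{\Phi}-F_D\Psi\big)+\delta$.
Indeed, from $A^\top\overline{\Lambda}=F_D$ we get $F_D\Psi=\overline{\Lambda}_\rho\cdot A_t\Psi+\overline{\Lambda}_m\cdot A_x\Psi+\overline{\Lambda}_\eta\cdot A_R\Psi$; bounding $\overline{\Lambda}_\rho\cdot A_t\Psi\leqslant-\overline{\Lambda}_\rho\cdot H(A_x\Psi)+\delta\|\overline{\Lambda}_\rho\|_{L^1(Q_D')}=-\overline{\Lambda}_\rho\cdot H(A_x\Psi)+\delta$ (using $\overline{\Lambda}_\rho\geqslant 0$ and (B)) and $\overline{\Lambda}_\eta\cdot A_R\Psi\leqslant R\|\overline{\Lambda}_\eta\|_{L^1(\Omega_D)}$, then subtracting the strong-duality identity $F_D\overline{\Phi}=L(\overline{V})\cdot\overline{\Lambda}_\rho+R\|\overline{\Lambda}_\eta\|_{L^1(\Omega_D)}$ and using $\overline{\Lambda}_m=\overline{V}\overline{\Lambda}_\rho$, one arrives at $\overline{\Lambda}_\rho\cdot\big(L(\overline{V})+H(A_x\Psi)-\overline{V}\cdot A_x\Psi\big)\leqslant F_D\overline{\Phi}-F_D\Psi+\delta$; invoking~\eqref{eq:improved_Young} to bound the bracket below by $|f_L(\overline{V})-f_H(A_x\Psi)|^2$ and recalling $A_x\Psi=\nabla_D\Psi$ gives the claim. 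Applying the master inequality with $\Psi=\overline{\Phi}$ (here $\delta=0$ by~\eqref{eq:discrete_HJ_vanishing_viscosity}, $|A_R\overline{\Phi}|\leqslant R$ by~\eqref{eq:forward_difference_bound_vanishing_viscosity}, and $F_D\overline{\Phi}-F_D\overline{\Phi}=0$) forces $f_L(\overline{V})=f_H(\nabla_D\overline{\Phi})$ on $\operatorname{support}(\overline{\Lambda}_\rho)$. Applying it with $\Psi=\Pi\overline{\varphi}$ (here $\delta=Ch$ by (A), $|A_R\Pi\overline{\varphi}|\leqslant\operatorname{Lip}(\overline{\varphi}(0,\cdot))\leqslant R$, and $F_D\overline{\Phi}-F_D(\Pi\overline{\varphi})=\cK_D(\mu,\nu)-F_D(\Pi\overline{\varphi})\leqslant C\sqrt h$ by Theorem~\ref{th:convergence_of_cost}, Lemma~\ref{lem:continuous discrete integral gap} and optimality of $\overline{\varphi}$) yields $\|f_L(\overline{V})-f_H(\nabla_D\Pi\overline{\varphi})\|^2_{L^2_{\overline{\Lambda}_\rho}(Q_D')}\leqslant C\sqrt h$. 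Combining the two applications, and replacing $f_L(\overline{V})$ by $f_H(\nabla_D\overline{\Phi})$ on the support of the weight, gives exactly~\eqref{eq:estimate_for_potential}.

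Finally, for~\eqref{eq:estimate_for_velocity}: by~\eqref{eq:link_fL_fH}, $f_L(\overline{v})=f_L(\nabla H(\nabla\overline{\varphi}))=f_H(\nabla\overline{\varphi})$, so what remains is to compare $f_H(\nabla_D\Pi\overline{\varphi})$ with $f_H$ evaluated at the pointwise samples of $\nabla\overline{\varphi}$. Since $\overline{\varphi}\in C^{1,1}$, the centered difference obeys $|\nabla_D\Pi\overline{\varphi}-\Pi(\nabla\overline{\varphi})|\leqslant C\Delta x=Ch$ pointwise, and since all arguments lie in the fixed ball $B_{\sqrt{d}R}$, the assumed local Lipschitz continuity of $f_H$ yields $|f_H(\nabla_D\Pi\overline{\varphi})-f_L(\overline{v})|\leqslant Ch$ pointwise, hence $\|f_H(\nabla_D\Pi\overline{\varphi})-f_L(\overline{v})\|^2_{L^2_{\overline{\Lambda}_\rho}(Q_D')}\leqslant Ch^2\|\overline{\Lambda}_\rho\|_{L^1(Q_D')}=Ch^2$. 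Combining with the previous display via the triangle inequality for $\|\cdot\|_{L^2_{\overline{\Lambda}_\rho}(Q_D')}$ and $(x+y)^2\leqslant 2x^2+2y^2$ gives~\eqref{eq:estimate_for_velocity}. The main obstacle is the consistency estimate (A): it is the one place where the $C^{1,1}$ hypothesis is genuinely used, and one must check that all its lower-order $O(h)$ contributions — whose constants may depend on $\|\overline{\varphi}\|_{C^{1,1}}$ — are absorbed by the $O(\sqrt h)$ term coming from Theorem~\ref{th:convergence_of_cost}; keeping the overall constant dependent only on $\operatorname{diam}(\Omega)$, $L$ and $R$ also forces one to work specifically with the optimal potential of Proposition~\ref{prop:bound_of_optimal_potential}, whose Lipschitz constants are controlled independently of $\mu,\nu$.
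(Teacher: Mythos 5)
Your proof is correct and takes essentially the same route as the paper: quantify the discrete duality gap via the improved Young inequality \eqref{eq:improved_Young}, bound the $O(h)$ violation of the discrete Hamilton--Jacobi constraint by $\Pi\overline{\varphi}$ using the $C^{1,1}$ hypothesis, and combine with Theorem~\ref{th:convergence_of_cost} and Lemma~\ref{lem:continuous discrete integral gap}. The only (cosmetic) difference is that you fold the violation $\delta$ directly into a generalized duality-gap inequality using $\|\overline{\Lambda}_\rho\|_{L^1(Q_D')}=1$, whereas the paper obtains an exactly admissible competitor by shifting to $\widetilde{\varphi}=\overline{\varphi}-t\,\delta_{\overline{\varphi}}$ (which has the same spatial gradient); the two devices are equivalent.
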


\begin{corollary}\label{cor:convergence_of_optimizer_quadcost}
    The estimates in Theorem \ref{th:convergence_of_optimizer} hold for the Lagrangians \(L(v)=|v|^p/p\) with \(p\leqslant 2\), and in this case they read, with $1/p + 1/q = 1$,
    \begin{align*}
     \left\|
        \left(\nabla_D \overline{\Phi} \right)^{q/2}- \left(\nabla_D \Pi\overline{\varphi} \right)^{q/2}
        \right\|^2_{L^2_{\overline{\Lambda}_\rho}(Q'_D)}\leqslant C\sqrt{h}, \qquad
    \left\|
        \overline{V} ^{p/2}- \overline{v}^{p/2}
        \right\|^2_{L^2_{\overline{\Lambda}_\rho}(Q'_D)}\leqslant C\sqrt{h}.
    \end{align*}
    In particular for $p=2$, it simplifies to
     \begin{align*}
     \left\|
        \nabla_D \overline{\Phi}- \nabla_D \Pi\overline{\varphi}
        \right\|^2_{L^2_{\overline{\Lambda}_\rho}(Q'_D)}\leqslant C\sqrt{h}, \qquad
    \left\|
        \overline{V}- \overline{v}
        \right\|^2_{L^2_{\overline{\Lambda}_\rho}(Q'_D)}\leqslant C\sqrt{h}.
    \end{align*}
    \end{corollary}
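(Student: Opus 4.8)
The plan is to read off Corollary~\ref{cor:convergence_of_optimizer_quadcost} directly from Theorem~\ref{th:convergence_of_optimizer}. The power Lagrangians $L(v)=|v|^p/p$ with $p>1$ are non-negative, strictly convex and superlinear, hence admissible under the standing assumptions, and---as recalled in the excerpt just before~\eqref{eq:link_fL_fH}---they satisfy the improved Young inequality~\eqref{eq:improved_Young}. So the whole content of the corollary reduces to two points: (i) making the functions $f_L,f_H$ explicit, and (ii) checking that $f_H$ is Lipschitz on bounded sets precisely when $p\leqslant 2$, which is the single place where the restriction $p\leqslant 2$ enters.

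For point (i) I would recall, following \cite[Lemma~3.2]{santambrogio2018regularity}, that with $H(w)=|w|^q/q$ and $1/p+1/q=1$ the inequality~\eqref{eq:improved_Young} holds with $f_L(v)=(2\max\{p,q\})^{-1/2}\,v^{p/2}$ and $f_H(w)=(2\max\{p,q\})^{-1/2}\,w^{q/2}$, the vector power being $u^r:=u|u|^{r-1}$; indeed $|f_L(v)-f_H(w)|^2=\tfrac{1}{2\max\{p,q\}}|v^{p/2}-w^{q/2}|^2$. As a sanity check one verifies the compatibility relation~\eqref{eq:link_fL_fH}: since $\nabla H(w)=w|w|^{q-2}=w^{q-1}$ and $(q-1)\tfrac{p}{2}=\tfrac{q}{2}$ (because $p+q=pq$), one has $f_L(\nabla H(w))=(2\max\{p,q\})^{-1/2}(w^{q-1})^{p/2}=(2\max\{p,q\})^{-1/2}w^{q/2}=f_H(w)$. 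Theorem~\ref{th:convergence_of_optimizer}---whose hypothesis $\overline{\varphi}\in C^{1,1}(Q)$ is assumed here as well---then yields~\eqref{eq:estimate_for_potential} for this $f_H$; pulling the scalar factor $(2\max\{p,q\})^{-1/2}$ out of the norm and into the constant produces exactly the first displayed estimate of the corollary, the $C\sqrt{h}$ bound on $\|(\nabla_D \overline{\Phi})^{q/2}-(\nabla_D \Pi\overline{\varphi})^{q/2}\|^2_{L^2_{\overline{\Lambda}_\rho}(Q'_D)}$.

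For point (ii), the map $w\mapsto w^{q/2}=w|w|^{q/2-1}$ is $C^1$ on $\RR^d\setminus\{0\}$ with differential of magnitude comparable to $|w|^{q/2-1}$, so it is Lipschitz on every bounded set if and only if $q/2-1\geqslant 0$, that is, if and only if $q\geqslant 2$, that is, if and only if $p\leqslant 2$. In that range $f_H$ is Lipschitz on bounded sets, so the second part of Theorem~\ref{th:convergence_of_optimizer} applies and gives~\eqref{eq:estimate_for_velocity}, hence the second displayed estimate, with $\overline{V}=\overline{\Lambda}_m/\overline{\Lambda}_\rho$ on $\operatorname{support}(\overline{\Lambda}_\rho)$ and $\overline{v}=\nabla H(\nabla \overline{\varphi})$. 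When $p=q=2$ we have $\max\{p,q\}=2$ and $v^{p/2}=v$, $w^{q/2}=w$, so $f_L(v)=v/2$ and $f_H(w)=w/2$ are linear (in particular Lipschitz), and both estimates collapse to the stated $C\sqrt{h}$ bounds on $\|\nabla_D \overline{\Phi}-\nabla_D \Pi\overline{\varphi}\|^2_{L^2_{\overline{\Lambda}_\rho}(Q'_D)}$ and $\|\overline{V}-\overline{v}\|^2_{L^2_{\overline{\Lambda}_\rho}(Q'_D)}$ once the harmless factor $1/4$ is absorbed into $C$. I do not expect a genuine obstacle here, since the argument is a substitution into an already-proved theorem; the only delicate point is the sharp threshold at which $w\mapsto w^{q/2}$ ceases to be locally Lipschitz, which is exactly why the velocity estimate is limited to $p\leqslant 2$---for $p>2$ one only obtains Hölder control of $f_H$ near the origin and the present argument would not transfer without modification.
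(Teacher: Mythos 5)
Your proposal is correct and follows essentially the same route as the paper: identify $f_L(v)=v^{p/2}/\sqrt{2q}$ and $f_H(w)=w^{q/2}/\sqrt{2q}$ from the improved Young inequality for power costs, absorb the scalar factor into $C$, observe that $f_H$ is Lipschitz on bounded sets exactly when $q\geqslant 2$ (i.e.\ $p\leqslant 2$), and specialize to $f_L(v)=v/2$, $f_H(w)=w/2$ for $p=2$. Your added verification of the compatibility relation~\eqref{eq:link_fL_fH} and the explicit explanation of why the threshold sits at $p=2$ are sound elaborations of details the paper leaves implicit.
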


\noindent Note that a quantity such as $\left\|
        \overline{V} ^{p/2}- \overline{v}^{p/2}
        \right\|^2_{L^2_{\overline{\Lambda}_\rho}(Q'_D)}$ is \emph{not} a discrete $L^p$ norm, it is a discrete $L^2$ norm between (vectorial) powers of quantities of interest.
    
    \begin{proof}
       The map \(f_H(w)=w^{q/2} / \sqrt{2 q}\) for \(q = p/(p-1) \geqslant 2\) is Lipschitz on bounded sets, while we can take $f_L(v) = v^{p/2} / \sqrt{2 q}$, and the factor $\sqrt{2q}$ can be absorbed in the constant $C$. The second claim follows from $f_L(v)=v/2$ and $f_H(w)=w/2$ for $p=2$.
    \end{proof}

To prove the theorem we follow an argument already present in the literature. It consists in quantifying the duality gap between an optimal primal solution and a (non-optimal) dual competitor: see \cite{benamou2017variational,santambrogio2018regularity} and references therein for this argument applied to get regularity estimates at the continuous level, and \cite{NataleTodeschi2021finitevolume} where this method is used to analyze discretization of dynamic optimal transport. We first prove some primal-dual relationship at the continuous level, then estimate the duality gap at the discrete level, and finally proceed with the proof. 

\begin{proposition}[Primal-dual relation at optimality in the continuous problem]\label{prop:fL_fH_continuous}
Let \(L,H\), \(f_L,f_H\) be assumed in Theorem \ref{th:convergence_of_optimizer}. Let  \((\overline{\rho},\overline{v})\) be an optimizer for the continuous primal problem and \(\overline{\varphi}\) be an optimizer in the dual problem. Then at least $\overline{\rho}$-a.e.
\begin{equation}
\label{eq:link_v_phi_at_optimality_continuous}
\overline{v} = \nabla H ( \overline{\varphi} ),
\end{equation}
and in particular combining with~\eqref{eq:link_fL_fH} we obtain
\begin{equation}
\label{eq:link_v_phi_via_fL_fH}
f_L\left(\overline{v}\right)= f_H\left(\nabla \overline{\varphi} \right). 
\end{equation}
\end{proposition}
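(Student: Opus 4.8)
The plan is to derive \eqref{eq:link_v_phi_at_optimality_continuous} by exploiting the equality case in the chain of inequalities that establishes weak duality between the primal problem \eqref{eq:dynamic_formulation_OT} and the dual problem \eqref{eq:dual}. First I would recall how weak duality is proven: given an admissible pair $(\overline{\rho}_t, \overline{m}_t)$ for the continuity equation with $\overline{m}_t = \overline{\rho}_t \overline{v}_t$, and given an admissible $\overline{\varphi}$ (a viscosity subsolution, hence — after a mollification/approximation argument — satisfying $\partial_t \overline{\varphi} + H(\nabla \overline{\varphi}) \leqslant 0$ in a suitable almost-everywhere sense along the flow), one writes
\begin{align*}
F\overline{\varphi} = \int_\Omega \overline{\varphi}(1,\cdot)\,\ddr\nu - \int_\Omega \overline{\varphi}(0,\cdot)\,\ddr\mu = \int_0^1 \frac{\ddr}{\ddr t}\left( \int_\Omega \overline{\varphi}(t,\cdot)\,\ddr \overline{\rho}_t \right) \ddr t = \int_0^1 \int_\Omega \left( \partial_t \overline{\varphi} + \nabla \overline{\varphi} \cdot \overline{v}_t \right) \ddr \overline{\rho}_t \, \ddr t,
\end{align*}
using the continuity equation tested against $\overline{\varphi}$. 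Then applying the subsolution inequality $\partial_t \overline{\varphi} \leqslant - H(\nabla \overline{\varphi})$ and Young's inequality $\nabla \overline{\varphi} \cdot \overline{v} - H(\nabla \overline{\varphi}) \leqslant L(\overline{v})$ pointwise gives $F\overline{\varphi} \leqslant \int_0^1 \int_\Omega L(\overline{v}_t) \, \ddr \overline{\rho}_t \, \ddr t = \cK(\mu,\nu)$.

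Next, since $\overline{\varphi}$ and $(\overline{\rho},\overline{v})$ are both \emph{optimal}, the left and right ends of this chain coincide, so both inequalities must be equalities $\overline{\rho}_t \ddr t$-almost everywhere. The equality in Young's inequality $L(\overline{v}) + H(\nabla \overline{\varphi}) = \overline{v}\cdot \nabla \overline{\varphi}$ holds precisely when $\overline{v} = \nabla H(\nabla \overline{\varphi})$ (recall $H$ is differentiable since $L$ is strictly convex and superlinear, so $\nabla H$ is the single-valued inverse of $\nabla L$). This yields \eqref{eq:link_v_phi_at_optimality_continuous} $\overline{\rho}$-a.e. Then \eqref{eq:link_v_phi_via_fL_fH} follows immediately: by \eqref{eq:link_fL_fH} we have $f_H(\nabla \overline{\varphi}) = f_L(\nabla H(\nabla \overline{\varphi})) = f_L(\overline{v})$ $\overline{\rho}$-a.e.

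The main obstacle is making rigorous the integration-by-parts/chain-rule step $\frac{\ddr}{\ddr t}\int_\Omega \overline{\varphi}(t,\cdot)\,\ddr\overline{\rho}_t = \int_\Omega (\partial_t \overline{\varphi} + \nabla\overline{\varphi}\cdot \overline{v}_t)\,\ddr\overline{\rho}_t$ when $\overline{\varphi}$ is only Lipschitz (a viscosity subsolution, not $C^1$) and $\overline{\rho}_t$ may be singular. The clean way around this is to use the explicit structure of the optimizer constructed in \eqref{eq:competitor_dynamic_from_static} from a static optimal plan $\pi$: along $\pi$-a.e. straight-line trajectory $t \mapsto \Gamma_t(x,y) = (1-t)x+ty$, one has $t\mapsto \overline{\varphi}(t,\Gamma_t(x,y))$ Lipschitz, and by the Hopf–Lax representation \eqref{eq:Hopf-Lax} together with optimality of $\pi$ (so that $(x,y)$ lies on an optimal geodesic) this map is in fact \emph{affine} with slope $L(y-x)$, giving the equality case directly. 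Alternatively, one can approximate $\overline{\varphi}$ by sup-convolutions (which are semiconcave, hence twice differentiable a.e., and still subsolutions), pass the inequality chain to the limit, and identify the equality case on the support of $\overline{\rho}$. I would present the argument via the explicit geodesic construction since it sidesteps all regularity technicalities and directly exhibits $\overline{v}(t,\Gamma_t(x,y)) = y - x = \nabla H(\nabla \overline{\varphi}(t,\Gamma_t(x,y)))$ for $\pi$-a.e. $(x,y)$.
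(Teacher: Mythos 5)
Your final argument is correct and lands in the same neighborhood as the paper's, but by a different mechanism, so it is worth comparing. The paper, like you, abandons the duality-gap chain you sketch first: it explicitly notes the difficulty of pairing the continuity equation in the weak sense against a merely viscosity-subsolution $\overline{\varphi}$, which is exactly the obstacle you identify, and it likewise falls back on the static structure of the optimizers. The mechanics differ, though. The paper writes the optimal map as $T = \mathrm{Id} + \nabla H(\nabla\overline{\varphi}(0,\cdot))$, sets $T_t = (1-t)\mathrm{Id} + tT$, identifies $\overline{v}_t = (T-\mathrm{Id})\circ T_t^{-1}$ via the structure results for Wasserstein geodesics, and then uses the method of characteristics for the smooth Hamilton--Jacobi solution ($\nabla\overline{\varphi}$ constant along $t\mapsto T_t(x)$) to propagate the relation from $t=0$. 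You instead run the calibration argument: Hopf--Lax plus optimality of $\pi$ makes $t\mapsto\overline{\varphi}(t,\Gamma_t(x,y))$ affine with slope $L(y-x)$, and differentiating against the pointwise subsolution inequality forces equality in Young's inequality, hence $y-x=\nabla H(\nabla\overline{\varphi})$ along the trajectory. Your route is arguably more self-contained (no characteristics, no explicit inverse $T_t^{-1}$) and isolates the equality-in-Young structure that the rest of Section~\ref{sec:convergence_optimizers} is built on; the paper's route leans more heavily on citable results from the static theory.

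Two loose ends you should close. First, the proposition concerns an \emph{arbitrary} optimizer $(\overline{\rho},\overline{v})$, so you must justify that every dynamic optimizer is of the form~\eqref{eq:competitor_dynamic_from_static} for some optimal static plan $\pi$ (the paper gets the analogous identification from the cited structure theorems); otherwise your geodesic construction only treats one particular minimizer. Second, your differentiation step needs $\overline{\varphi}$ differentiable at $\overline{\rho}_t$-a.e.\ point: Rademacher gives only Lebesgue-a.e.\ differentiability, which is not enough against a possibly singular $\overline{\rho}_t$, so you should invoke the $C^{1,1}$ hypothesis of Theorem~\ref{th:convergence_of_optimizer} here (as the paper does when it says ``as $\overline{\varphi}$ is smooth''), and note that for a general dual optimizer the affinity of $t\mapsto\overline{\varphi}(t,\Gamma_t(x,y))$ follows from comparison with the Hopf--Lax evolution together with $\overline{\varphi}(1,y)-\overline{\varphi}(0,x)=c(x,y)$ $\pi$-a.e.
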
 

\noindent When $\overline{\varphi}$ is $C^{1,1}$ the relation~\eqref{eq:link_v_phi_at_optimality_continuous} is actually a way to define $\overline{v}$ in an  unambiguous manner outside of the support of $\overline{\rho}$. The analogue for the discrete problem would be \eqref{eq:link_v_phi_optimality_discrete} that we derived formally when doing the inf-sup exchange. For the proof one could use arguments in the style of \cite[Lemma 4.1]{benamou2017variational} and even quantify a duality gap but we would have to be careful between the pairing of the continuity equation in a weak sense and the Hamilton-Jacobi equation in a viscosity sense. Rather we exploit the precise structure of the optimizers that come from the static problem, at the price of more obscure proof for the non-expert reader. 

\begin{proof}
Recall that $- \overline{\varphi}(0, \cdot)$ is an optimal Kantorovich potentials in the static dual problem~\eqref{eq:static_dual_cost}; see the proof of Proposition \ref{prop:bound_of_optimal_potential}. Thus the optimal transport map $T$ between $\mu$ and $\nu$ is given by $T(x) = x + \nabla H(\nabla \varphi(0,x))$ at least $\mu$-a.e. (see e.g.~\cite[Section 1.3]{santambrogio2015optimal}). Let us write $T_t = (1-t)\mathrm{Id} + t T$ which is the optimal transport between $\mu$ and $\overline{\rho}_t$ and is invertible~\cite[Lemma 5.29]{santambrogio2015optimal}. Proposition 5.30 in~\cite{santambrogio2015optimal} yields $\overline{v}_t = (T - \mathrm{Id}) \circ T_t^{-1}$. 

On the other hand as $\overline{\varphi}$ is smooth and solves the Hamilton-Jacobi equation we can use the method of characteristics~\cite[Section 3.3]{evans10PDE}. The characteristics are given by $t \mapsto T_t(x)$, and we know that $\nabla  \overline{\varphi}$ is constant along characteristics. That gives $\nabla  \overline{\varphi}(t,T_t(x)) = \nabla \overline{\varphi}(0,x)$. Composing with $\nabla H$ and using the definition of $T$ we indeed obtain
$\nabla H(\nabla  \overline{\varphi})(t,T_t(x)) = \nabla H(\nabla  \overline{\varphi})(0,x) = T(x) - x = \overline{v}_t(T_t(x))$. Eventually we compose on the left with $T_t^{-1}$ to get the final result. 
\end{proof}

In the next lemma, we show the key estimate: the quantification of the duality gap for the discrete problem. 

\begin{lemma}[Quantification of the duality gap in the discrete problem]\label{lem:L2bound_of_disc_nabla_H}
    Let \(L,H\), \(f_L,f_H\), \(\overline{\Phi},\overline{\Lambda}=(\overline{\Lambda}_\rho,\overline{\Lambda}_m,\overline{\Lambda}_\eta), \overline{V}= \overline{\Lambda}_m /\overline{\Lambda}_\rho \) be assumed to be as in Theorem \ref{th:convergence_of_optimizer}. Then for any admissible discrete competitor \(\Phi\), we have,
    \begin{align}\label{eq:cost_gap_bound_f_H}
        \left\|f_H\left(\nabla_D\overline{\Phi}\right)- f_H(\nabla_D \Phi)\right\|^2_{L^2_{\overline{\Lambda}_\rho}(Q'_D)}
        \leqslant F_D\overline{\Phi} - F_D\Phi.
    \end{align}

\begin{proof}
        We will actually prove
        \begin{align}
            \left\|f_L\left(\overline{V}\right)- f_H(\nabla_D \Phi)\right\|^2_{L^2_{\overline{\Lambda}_\rho}(Q'_D)}
            \leqslant F_D\overline{\Phi} - F_D\Phi,
        \end{align}
        and by setting \(\Phi=\overline{\Phi}\) we obtain \(f_L\left(\overline{V}\right)= f_H\left(\nabla_D \overline{\Phi} \right)\) on \(\operatorname{support}\left(\overline{\Lambda}_\rho\right)\) (that we already derived combining \eqref{eq:link_v_phi_optimality_discrete} and \eqref{eq:link_fL_fH}), thus the stated inequality.

Thanks to the absence of duality gap at optimality (see Proposition~\ref{prop:no_duality_gap_vanishing_viscosity}), we obtain with the discrete continuity equation $F_D=A^\top\Lambda$ \eqref{eq:continuity_equation_vanishing_viscosity},
        \begin{align*}
            F_D\overline{\Phi}-F_D\Phi 
            &= L\left(\frac{\overline{\Lambda}_m}{\overline{\Lambda}_\rho}\right)\cdot \overline{\Lambda}_\rho + 
            R \left\|\overline{\Lambda}_\eta\right\|_{L^1(\Omega_D)} - F_D\Phi
            \\
            &= L\left(\frac{\overline{\Lambda}_m}{\overline{\Lambda}_\rho}\right)\cdot \overline{\Lambda}_\rho + 
            R \left\|\overline{\Lambda}_\eta\right\|_{L^1(\Omega_D)}
              -  (A^\top \overline{\Lambda}) \cdot \Phi \\
            &= L\left(\overline{V}\right)\cdot \overline{\Lambda}_\rho + R \left\|\overline{\Lambda}_\eta\right\|_{L^1(\Omega_D)} - \overline{\Lambda}_\rho \cdot (A_t \Phi) - \overline{\Lambda}_m \cdot (A_x \Phi) - \overline{\Lambda}_\eta \cdot (A_R \Phi) \\
            & \geqslant L\left(\overline{V}\right)\cdot \overline{\Lambda}_\rho  - \overline{\Lambda}_\rho \cdot (A_t \Phi) - \overline{\Lambda}_m \cdot (A_x \Phi) \\
            &= \overline{\Lambda}_\rho \cdot \left\{
                L\left(\overline{V}\right) - \overline{V}\cdot A_x \Phi - A_t \Phi
            \right\}\\
            &\geqslant \overline{\Lambda}_\rho \cdot \left\{ -A_t \Phi - H(A_x \Phi) +  \left| f_L\left( \overline{V}\right) - f_H \left( A_x \Phi\right)\right|^2 \right\}\\
            &\geqslant  \left\|f_L\left(\overline{V}\right)- f_H(\nabla_D \Phi)\right\|^2_{L^2_{\overline{\Lambda}_\rho}(Q'_D)},
        \end{align*}
where we have used in the last inequality the constraint \eqref{eq:discrete_HJ_vanishing_viscosity} coming from the admissibility of \(\Phi\).
    \end{proof}
\end{lemma}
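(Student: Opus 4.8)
The plan is to establish first a sharper \emph{intermediate} inequality in which the left-hand side features $f_L(\overline{V})$ in place of $f_H(\nabla_D\overline{\Phi})$, namely
\begin{equation*}
\left\|f_L\left(\overline{V}\right)- f_H(\nabla_D \Phi)\right\|^2_{L^2_{\overline{\Lambda}_\rho}(Q'_D)} \leqslant F_D\overline{\Phi} - F_D\Phi,
\end{equation*}
valid for every admissible competitor $\Phi$, and then to recover the stated form \eqref{eq:cost_gap_bound_f_H} by specializing to $\Phi=\overline{\Phi}$. Indeed, at that choice the right-hand side vanishes by optimality of $\overline{\Phi}$, forcing $f_L(\overline{V})=f_H(\nabla_D\overline{\Phi})$ pointwise on $\operatorname{support}(\overline{\Lambda}_\rho)$; substituting this identity back into the intermediate inequality produces exactly \eqref{eq:cost_gap_bound_f_H}. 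This identity is also what \eqref{eq:link_v_phi_optimality_discrete} combined with \eqref{eq:link_fL_fH} predicts, but here it emerges for free.

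To prove the intermediate inequality, I would rewrite $F_D\overline{\Phi}-F_D\Phi$ through a chain of (in)equalities. First, strong duality (Proposition~\ref{prop:no_duality_gap_vanishing_viscosity}) replaces $F_D\overline{\Phi}$ by $L(\overline{V})\cdot\overline{\Lambda}_\rho + R\|\overline{\Lambda}_\eta\|_{L^1(\Omega_D)}$, where $\overline{V}=\overline{\Lambda}_m/\overline{\Lambda}_\rho$. Second, the discrete continuity equation $A^\top\overline{\Lambda}=F_D$ from \eqref{eq:continuity_equation_vanishing_viscosity} lets me write $F_D\Phi=(A^\top\overline{\Lambda})\cdot\Phi=\overline{\Lambda}\cdot(A\Phi)$, which splits as $\overline{\Lambda}_\rho\cdot(A_t\Phi)+\overline{\Lambda}_m\cdot(A_x\Phi)+\overline{\Lambda}_\eta\cdot(A_R\Phi)$.

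The heart of the argument is a sequence of three nonnegativity estimates, all resting on $\overline{\Lambda}_\rho\geqslant 0$ element-wise. First, the $\overline{\Lambda}_\eta$-terms combine nonnegatively: admissibility gives $|A_R\Phi|\leqslant R$ via \eqref{eq:forward_difference_bound_vanishing_viscosity}, so H\"older's inequality yields $\overline{\Lambda}_\eta\cdot(A_R\Phi)\leqslant R\|\overline{\Lambda}_\eta\|_{L^1(\Omega_D)}$, allowing me to discard $R\|\overline{\Lambda}_\eta\|_{L^1(\Omega_D)}-\overline{\Lambda}_\eta\cdot(A_R\Phi)\geqslant 0$. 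Second, after using $\overline{\Lambda}_m=\overline{V}\,\overline{\Lambda}_\rho$ to factor out $\overline{\Lambda}_\rho$, the surviving bracket reads $L(\overline{V})-\overline{V}\cdot A_x\Phi-A_t\Phi$; applying the improved Young inequality \eqref{eq:improved_Young} with $v=\overline{V}$ and $w=A_x\Phi$ bounds it below by $-A_t\Phi-H(A_x\Phi)+|f_L(\overline{V})-f_H(A_x\Phi)|^2$. Third, the Hamilton--Jacobi constraint \eqref{eq:discrete_HJ_vanishing_viscosity}, that is $A_t\Phi+H(A_x\Phi)\leqslant 0$, makes the first two terms nonnegative and they are dropped. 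Pairing the remaining quadratic term against the nonnegative weight $\overline{\Lambda}_\rho$ and recalling $A_x\Phi=\nabla_D\Phi$ gives precisely $\|f_L(\overline{V})-f_H(\nabla_D\Phi)\|^2_{L^2_{\overline{\Lambda}_\rho}(Q'_D)}$, closing the chain.

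I expect the only genuine subtlety to be bookkeeping around the weight $\overline{\Lambda}_\rho$: the velocity $\overline{V}$ is defined solely on $\operatorname{support}(\overline{\Lambda}_\rho)$, so the identity $\overline{\Lambda}_m=\overline{V}\,\overline{\Lambda}_\rho$ and the pointwise use of \eqref{eq:improved_Young} must be read where $\overline{\Lambda}_\rho>0$. This is harmless because the weighted norm ignores the complement, and the convention $\overline{\Lambda}_m=0$ wherever $\overline{\Lambda}_\rho=0$ (from Definition~\ref{def:primal_problem_vanishing_viscosity}) guarantees the term $\overline{\Lambda}_m\cdot(A_x\Phi)$ vanishes there. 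Every other step is a direct inequality; the sole inputs are strong duality, the two admissibility constraints carried by $\Phi$, and the improved Young inequality.
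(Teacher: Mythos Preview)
Your proposal is correct and follows essentially the same approach as the paper's proof: establish the intermediate inequality with $f_L(\overline{V})$ via strong duality, the discrete continuity equation, the bound $|A_R\Phi|\leqslant R$, the improved Young inequality, and the discrete Hamilton--Jacobi constraint, then specialize to $\Phi=\overline{\Phi}$ to obtain $f_L(\overline{V})=f_H(\nabla_D\overline{\Phi})$ on $\operatorname{support}(\overline{\Lambda}_\rho)$. Your extra remark on the bookkeeping where $\overline{\Lambda}_\rho=0$ is a welcome clarification but does not alter the argument.
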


We next quantify the violation of the discrete constraint that occurs by discretizing a continuous admissible competitor. This is where we crucially rely on the $C^{1,1}$ regularity of $\varphi$. 

\begin{lemma}[Violation of the Hamilton-Jacobi constraint due to discretization]
\label{lemma:violation_HJ}
    Let \({\varphi}\) be an admissible potential for the continuous dual problem and \(\Pi {\varphi}\) be its discretization. Assume that \({\varphi}\in C^{1,1}({Q})\). Then the maximum violation of the discrete Hamilton-Jacobi inequality by \(\Pi {\varphi}\)  is controlled as,
    \begin{align*}
        \|\left[\partial_{\Delta t} \Pi{\varphi} + H (\nabla_D \Pi{\varphi})-\varepsilon \Delta_D \Pi{\varphi}\right]^+\|_{L^\infty(Q'_D)}
        \leqslant Ch,
    \end{align*}
    with a constant \(C\) depending only on $H$ and \(\|D^2 {\varphi} \|_{L^\infty(Q)}\). Here \([\cdot]^+\) denotes the positive part and the parameter \(\varepsilon\) satisfies the monotonicity condition \eqref{eq:monotonicity_condition} for the vanishing viscosity scheme.
    \begin{proof}
        Using the admissibility of \({\varphi}\) and a standard argument by the mean value theorem,
        we have
        \begin{align*}
            & \|\left[\partial_{\Delta t} \Pi{\varphi} + H (\nabla_D \Pi{\varphi})-\varepsilon \Delta_D \Pi{\varphi}\right]^+\| \\
            & \quad \leqslant \|\partial_{\Delta t} \Pi\varphi  - \partial_t \varphi \| + \| H (\nabla_D \Pi\varphi) - H(\nabla \varphi)\|
            + \varepsilon\|\Delta_D \Pi\varphi \| + \|\left[\partial_t \varphi + H(\nabla  \varphi) \right]^+ \|\\
            & \quad \leqslant \Delta t\|\partial_{tt} \varphi\| + \left( \Delta x \operatorname{Lip}(H,B_R)+\varepsilon \right) \sum_{k,l}\|\partial_{kl} \varphi\|,
        \end{align*}
        where we denoted \(\|\cdot\|_{L^\infty(Q'_D)}\) by \(\|\cdot\|\) for simplicity. Finally,  the condition \eqref{eq:monotonicity_condition}  ensures that $\varepsilon$ scales like $\Delta x$, so that together with the assumption \(\varphi\in C^{1,1}({Q})\), we get the stated inequality.
    \end{proof}
\end{lemma}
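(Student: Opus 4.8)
The plan is to perform a consistency analysis: bound each discrete operator appearing in the discrete Hamilton-Jacobi expression against its continuous counterpart, and then use the continuous constraint to annihilate the leading-order term. Since \(\varphi\in C^{1,1}(Q)\) it is classically differentiable with \(\partial_t\varphi,\nabla\varphi\) Lipschitz and \(D^2\varphi\in L^\infty(Q)\); being \(C^1\), its viscosity subsolution property collapses to the pointwise classical inequality \(\partial_t\varphi+H(\nabla\varphi)\leqslant 0\) holding everywhere on \(Q\). Fixing a grid point \((i\Delta t,j\Delta x)\in Q'_D\), I would write the decomposition
\[
\partial_{\Delta t}\Pi\varphi + H(\nabla_D\Pi\varphi)-\varepsilon\Delta_D\Pi\varphi
= \underbrace{\big(\partial_{\Delta t}\Pi\varphi-\partial_t\varphi\big)}_{(\mathrm{I})}
+ \underbrace{\big(H(\nabla_D\Pi\varphi)-H(\nabla\varphi)\big)}_{(\mathrm{II})}
- \underbrace{\varepsilon\Delta_D\Pi\varphi}_{(\mathrm{III})}
+ \underbrace{\big(\partial_t\varphi+H(\nabla\varphi)\big)}_{\leqslant 0},
\]
where all continuous quantities are evaluated at \((i\Delta t,j\Delta x)\). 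Taking the positive part eliminates the last term, so \(\|[\,\cdot\,]^+\|_{L^\infty(Q'_D)}\) is bounded by the supremum over the grid of \(|(\mathrm{I})|+|(\mathrm{II})|+|(\mathrm{III})|\).

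Next I would estimate the three error terms using \emph{only} the \(L^\infty\) bound on the first derivatives, since the reduced \(C^{1,1}\) regularity forbids a genuine second-order Taylor expansion. For \((\mathrm{I})\), I would write the forward time difference as the average of \(\partial_t\varphi\) over \([i\Delta t,(i+1)\Delta t]\) via the fundamental theorem of calculus, subtract \(\partial_t\varphi(i\Delta t,\cdot)\), and use the Lipschitz bound on \(\partial_t\varphi\) to get \(|(\mathrm{I})|\leqslant\tfrac12\Delta t\,\|D^2\varphi\|_{L^\infty(Q)}\). For \((\mathrm{II})\), the centered difference \(\nabla_D\Pi\varphi\) is the symmetric average of \(\nabla\varphi\) over \([x-\Delta x\,e_k,x+\Delta x\,e_k]\) in each coordinate, so the same argument gives \(|\nabla_D\Pi\varphi-\nabla\varphi|\leqslant\tfrac12\Delta x\,\|D^2\varphi\|_{L^\infty(Q)}\); since \(\varphi\) is Lipschitz both arguments lie in a fixed ball \(B_R\), and local Lipschitz continuity of \(H\) yields \(|(\mathrm{II})|\leqslant\tfrac12\operatorname{Lip}(H,B_R)\,\Delta x\,\|D^2\varphi\|_{L^\infty(Q)}\). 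For \((\mathrm{III})\), each second-order difference quotient defining \(\Delta_D\Pi\varphi\) is controlled by a one-dimensional integral identity valid for \(C^{1,1}\) functions, giving \(|\Delta_D\Pi\varphi|\leqslant d\,\|D^2\varphi\|_{L^\infty(Q)}\).

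Finally I would absorb the stepsizes. With \(\Delta t=h\) and \(\Delta x=h/\zeta\), both are \(O(h)\); and the monotonicity condition~\eqref{eq:monotonicity_condition} forces \(\varepsilon/\Delta x\leqslant\Delta x/(2d\Delta t)\), hence \(\varepsilon\leqslant(\Delta x)^2/(2d\Delta t)=O(h)\). Collecting \((\mathrm{I})\)--\((\mathrm{III})\) then produces the stated bound \(\leqslant Ch\) with \(C\) depending only on \(\operatorname{Lip}(H,B_R)\) and \(\|D^2\varphi\|_{L^\infty(Q)}\). The essentially only delicate point is the interplay of two features: the \(C^{1,1}\) (rather than \(C^2\)) regularity, which requires replacing pointwise Taylor remainders by integral/mean-value identities using only the \(L^\infty\) bound on \(D^2\varphi\); and the verification that the vanishing-viscosity parameter \(\varepsilon\) is genuinely \(O(h)\) under~\eqref{eq:monotonicity_condition}, which is exactly what prevents the diffusion term \((\mathrm{III})\) from being \(O(1)\) and spoiling the rate.
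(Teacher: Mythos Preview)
Your proof is correct and follows essentially the same route as the paper: the identical four-term decomposition into the time-difference consistency error, the Hamiltonian consistency error, the viscosity term, and the nonpositive continuous Hamilton--Jacobi residual, followed by mean-value/integral estimates for each piece and the observation that $\varepsilon = O(\Delta x) = O(h)$ via~\eqref{eq:monotonicity_condition}. Your exposition is slightly more careful than the paper's in stressing that $C^{1,1}$ regularity requires integral identities rather than pointwise Taylor remainders, but the argument is the same.
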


\begin{proof}[\textbf{Proof of Theorem \ref{th:convergence_of_optimizer}}]
    Let \(\delta_{\overline{\varphi}}\coloneqq  \|\left[\partial_{\Delta t} \Pi\overline{\varphi} + H (\nabla_D \Pi\overline{\varphi})-\varepsilon \Delta_D \Pi\overline{\varphi}\right]^+\|_{L^\infty (Q_D')}\) be the maximum violation of the discrete Hamilton-Jacobi inequality by \(\Pi\overline{\varphi}\). We define a continuous potential \(\widetilde{\varphi}\) by
    \begin{align*}
        \widetilde{\varphi}(t,x)\coloneqq \overline{\varphi}(t,x) - t \delta_{\overline{\varphi}}
    \end{align*}
    so that it is an admissible competitor for the continuous problem and \(\Pi \widetilde{\varphi}\) is an admissible competitor for the discrete problem by construction. 
    This allows us to apply Lemma \ref{lem:L2bound_of_disc_nabla_H} to obtain
    \begin{multline*}
        \left\|f_H(\nabla_D\overline{\Phi})- f_H(\nabla_D \Pi \overline{\varphi})\right\|^2_{L^2_{\overline{\Lambda}_\rho}(Q'_D)}
        =\left\|f_H(\nabla_D\overline{\Phi})- f_H(\nabla_D \Pi \widetilde{\varphi})\right\|^2_{L^2_{\overline{\Lambda}_\rho}(Q'_D)} \\
        \leqslant F_D\overline{\Phi} - F_D\Pi  \widetilde{\varphi}
        =F_D\overline{\Phi}  - F_D\overline{\varphi}+\delta_{\overline{\varphi}}.
    \end{multline*}
As $F_D\overline{\varphi} \geqslant F \overline{\varphi} - Ch$ by Lemma~\ref{lem:continuous discrete integral gap}, and that $F \overline{\varphi} \geqslant F_D\overline{\Phi} - C \sqrt{h}$ thanks to Theorem~\ref{th:convergence_of_cost}, we obtain indeed  
\begin{equation*}
\left\|f_H(\nabla_D\overline{\Phi})- f_H(\nabla_D \Pi \overline{\varphi})\right\|^2_{L^2_{\overline{\Lambda}_\rho}(Q'_D)} \leqslant C h + C \sqrt{h} + \delta_{\overline{\varphi}}.
\end{equation*}    
Then we use Lemma~\ref{lemma:violation_HJ} to bound $\delta_{\overline{\varphi}}$ and get~\eqref{eq:estimate_for_potential}.

    The estimate \eqref{eq:estimate_for_velocity} is a simple consequence. We have
    \begin{align*}
        \|\nabla \overline{\varphi}-\nabla_D \Pi \overline{\varphi}\|_{L^\infty (Q_D')}\leqslant \Delta x \|D^2  \overline{\varphi}\|_{L^\infty (Q)} \leqslant C' h,
    \end{align*}
    with some constant \(C'\geqslant 0\) by a standard argument using the mean value theorem. 
    With Proposition \ref{prop:fL_fH_continuous} and the regularity assumption for \(f_H\) we have the inequality,
    \begin{align*}
        \left\|f_L\left(\overline{V}\right)- f_L\left(\overline{v}\right)\right\|_{L^2_{\overline{\Lambda}_\rho}(Q_D')}
        &= \|f_H(\nabla_D \overline{\Phi}) -f_H(\nabla \overline{\varphi})  \|_{L^2_{\overline{\Lambda}_\rho}(Q'_D)} \\
        &\leqslant   \left\|f_H(\nabla_D \overline{\Phi})- f_H(\nabla_D \Pi\overline{\varphi})\right\|_{L^2_{\overline{\Lambda}_\rho}(Q'_D)}+ C''h.
    \end{align*}
    with some \(C'' \geqslant 0\) depending on the Lipschitz constant of $f_H$ on $B_{\operatorname{Lip}(L,\operatorname{diam}(\Omega))}$.
    Applying the estimate \eqref{eq:estimate_for_potential} concludes the proof.
\end{proof} 

\begin{remark}[Comments on the rate $\sqrt{h}$]\label{rmk:CL_determines_rate}
    We see from the proofs of Theorem \ref{th:convergence_of_cost} and Theorem \ref{th:convergence_of_optimizer} that the lowest powers of convergence come from Theorem \ref{th:CandL_discrete_bound} (Theorem 1 in \cite{CrandallLions_discreteHJ}). This means that the convergence rate $\sqrt{h}$ for the solutions of the discrete Hamilton-Jacobi equations to the continuous one determines the convergence rate of both the cost and the optimizer of the optimal transport problem. In our numerical experiments in Section \ref{sec:numerics}, we found no examples where the convergence rate is exactly $\sqrt{h}$. This suggests that the convergence for any pair of (possibly very rough) input measures is better than our theoretical result $\sqrt{h}$. While the root $\sqrt{h}$ for a discrete Hamilton-Jacobi equation (Theorem~\ref{th:CandL_discrete_bound}) is sharp because viscosity solutions may develop shocks in finite time, it is known that solutions $\overline{\varphi}$ to the dual problem can only have shocks exactly at time $t=0$ or $t=1$ (see the proof of~\cite[Theorem 5.51]{villani2003topics}). It is however still unclear to us if this property can be leveraged to improve the rate of convergence. 
Some discretizations of Hamilton-Jacobi equation with higher order of convergence for smooth inputs can be found e.g. in~\cite{cheng2007discontinuous,xiong2013priori} but the convergence is in $L^2$, and not in $L^\infty$ as in Theorem~\ref{th:CandL_discrete_bound}. 
    Note, however, that a different discretization of the Hamilton-Jacobi equation with higher rates of convergence does not automatically improve our result: one would have to make sure that the steps in the proof of Theorem \ref{th:convergence_of_cost} and Theorem \ref{th:convergence_of_optimizer} carry through, and also that the constraint~\eqref{eq:discrete_HJ_inequality} is easy to handle numerically in case the scheme is different.
\end{remark}

\section{Numerical illustrations}
\label{sec:numerics}
We numerically examine our discrete optimal transport problems and convergence results. 

\subsection{Settings}
We solve the discrete problem with the vanishing viscosity scheme introduced in Section \ref{sec:discrete_OT_vanishing_viscosity}.
For the cost function,  we choose the quadratic Lagrangian  \(L(v)=|v|^2/2\) following many works in the numerical computation of optimal transport although our theory is not limited to this specific cost function.

As for the domain, we discretize the \(1+1\) dimensional time-space domain \(Q=[0,1]\times [-\frac{1}{2}, \frac{1}{2}]\)
 with identification \((t,-\frac{1}{2})\sim (t,\frac{1}{2})\). 
We make a family of discrete domains by subdividing \(Q\) into grid points  with subdivisions \(N_T\in\{16\times 2^{n} \mid n=0,1,2,3,4,5\}\) and \(N_X=N_T\). Namely the resolution \(h=\Delta t=\Delta x\) of each discrete domain is in \(\{\frac{1}{16}, \ldots, \frac{1}{16\times 2^5}\}\).

For each input probability measure, we compute its discretization given as \eqref{eq:discrete_probability_measure} that we evaluate with explicit expressions or via numerical integration.

\paragraph{Implementation of our discretization}

For each test case, we fix a pair of probability measures \(\mu\) and \(\nu\) and numerically solve the transport problem on each discrete domain. This can be performed by any standard convex optimization framework. We can obtain an optimal potential \(\overline{\Phi}\) by solving the dual problem  (Definition \ref{def:dual_problem_vanishing_viscosity}) and a minimizer \(\overline{\Lambda}=(\overline{\Lambda}_\rho, \overline{\Lambda}_m,\overline{\Lambda}_\eta )\) by solving the primal problem (Definition \ref{def:primal_problem_vanishing_viscosity}). As an example implementation, we solved the problems using the alternating direction method of multipliers (ADMM) \cite{boyd2019ADMM}. As this method directly finds a saddle point of~\eqref{eq:saddle_point_vanishing_viscosity} the output of the algorithm contains both $\overline{\Phi}$ and $\overline{\Lambda}$. We ran ADMM until the $L^2$ norms of both the primal and dual residuals became smaller than \(10^{-5}\). Such stopping criteria are discussed in literature such as \cite{Boyd2006convex}. Similarly to observations made in other discretizations of optimal transport such as \cite{Lavenant2018}, when the mesh size $h$ decreases in our discretization, the number of necessary iterations for ADMM is unaffected while the time per iteration increases, thus the total computational times increases. 
For interested readers, we share the source code in Jupyter (Python 3); see \href{https://github.com/sdsgisd/DynamicOTwithDualFormulation}{https://github.com/sdsgisd/DynamicOTwithDualFormulation}.

\paragraph{Comparison with standard finite differences}

As a point of comparison, we also solve the same test cases with the finite difference discretization proposed by Papadakis, Peyré, and Oudet~\cite{papadakis2014optimal} while it does not guarantee any convergence result. For that end, we used the Julia code originally written by the second author to handle regularized unbalanced optimal transport~\cite{baradat2021regularized}. In this code the resulting convex optimization problem is solved with proximal splitting which makes hard an exact comparison with our ADMM implementation. The Julia code is also shared in the same repository as above. 

\paragraph{Monitoring of the errors}

We consider the following gaps between continuous and discrete quantities:
\begin{align*}
    &\epsilon_\cK\coloneqq |\cK(\mu,\nu)-\cK_D(\mu,\nu)|,\\
    &\epsilon_{\varphi}\coloneqq \left\|
        \nabla_D \Pi\overline{\varphi}-  \nabla_D \overline{\Phi}
        \right\|^2_{L^2_{\overline{\Lambda}_\rho}(Q'_D)},\\
    & \epsilon_{v}\coloneqq \left\|
         \overline{v}-\overline{V}
        \right\|^2_{L^2_{\overline{\Lambda}_\rho}(Q'_D)},\\
    &\epsilon_{\rho}\coloneqq \left\|\Pi\overline{\rho}_t-\overline{\Lambda}_\rho \right\|_{L^1(Q'_D) }.
\end{align*}
Here \(\overline{\rho},\overline{v}\) are the optimal measure, velocity, and potential for the continuous problem.
 The error \(\epsilon_\cK\) is for the transport cost as in Theorem \ref{th:convergence_of_cost} and the error \( \epsilon_v\) is for optimal velocities in Corollary \ref{cor:convergence_of_optimizer_quadcost} (a special case of Theorem \ref{th:convergence_of_optimizer}). While we do not have a proven estimate between the continuous and discrete optimal measures, we also compute the error \(\epsilon_\rho\) for a reference purpose. 
 
 We plotted these errors in \autoref{fig:error_plots}, in which we also estimated the rates $\alpha$ such that $\varepsilon \sim h^\alpha$ by a standard routine with linear regression in the log domain. We omitted the plots for the error $\epsilon_\varphi$ since the explicit expression for $\overline \varphi$ is unavailable in Test case 1, and  $\epsilon_\varphi$ overlapped almost perfectly with $\epsilon_v$ in Test case 2 and 3 as \(\overline V=\nabla_D \overline{\Phi}\) and the difference between $\epsilon_v$ and $\epsilon_{\varphi}$ originates from the discrepancy between $\nabla_D \Pi\overline{\varphi}$ and $\overline{v} = \nabla \overline{\varphi}$ which is always zero except at a few exceptional grid points in these test cases.  
 
\subsection{Test cases}

We test three simple examples with different levels of regularity. In the second and the third test cases, the measures $\mu, \nu$ do not have full support, so they do not satisfy the assumptions of the previous works finding quantitative rates.  

\pgfplotsset{
  tick label style = {font=\scriptsize},
  every axis label = {font=\scriptsize},
  legend style = {font=\scriptsize, at={(0.5,-0.2)},anchor=north},
  legend columns=2,
  legend cell align=right,
  label style = {font=\scriptsize},
  legend entries={ $\frac{\ddr \mu}{\ddr x}$,$\frac{\ddr \nu}{\ddr x}$},
  xlabel={$x$},
  xlabel style={xshift=0pt, yshift=5pt},
ylabel={density},
ylabel style={xshift=0pt, yshift=-5pt},
title style={yshift=-4pt},
xtick={-0.4,-0.2,0.0,0.2,0.4}, 
  x tick label style={
    /pgf/number format/.cd,
    fixed,
    fixed zerofill,
    precision=1
  },
scale only axis=true,
width=0.25\textwidth
}

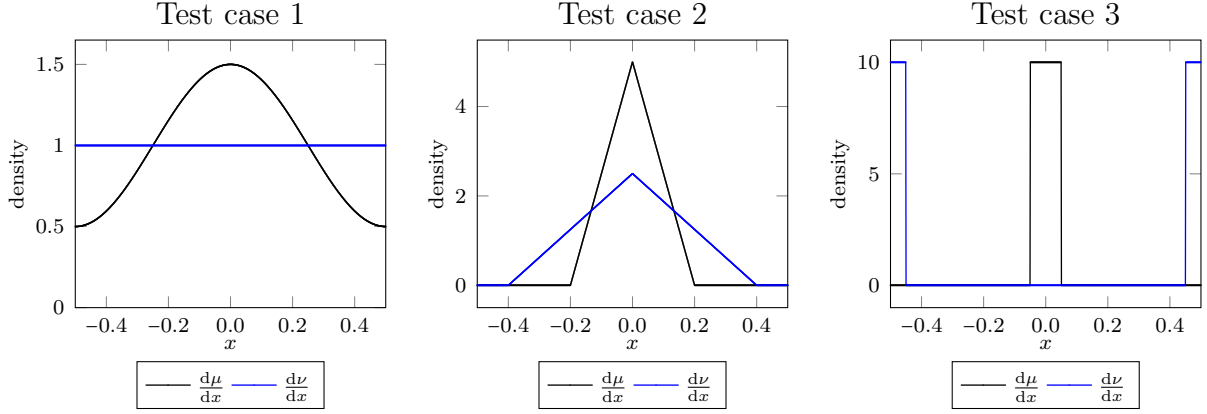
\begin{figure}[htbp]
\begin{center}
\begin{tabular}{ccc}

\begin{tikzpicture}
\begin{axis}[
title = {Test case 1},
xmin=-0.5,xmax=0.5, 
ymin=0, 
mark size=0.05pt
]
\addplot[color=black, line width = 0.5pt, mark = *] table [x=x, y=mu]{case_1_munu.txt};
\addplot[color=blue, line width = 0.5pt, mark = *] table [x=x, y=nu]{case_1_munu.txt};
\end{axis}
\end{tikzpicture}  

&

\begin{tikzpicture}
\begin{axis}[
title = {Test case 2},
xmin=-0.5,xmax=0.5,
mark size=0.05pt
]
\addplot[color=black, line width = 0.5pt, mark = *] table [x=x, y=mu]{case_2_munu.txt};
\addplot[color=blue, line width = 0.5pt, mark = *] table [x=x, y=nu]{case_2_munu.txt};
\end{axis}
\end{tikzpicture}  

& 

\begin{tikzpicture}
\begin{axis}[
title = {Test case 3},
xmin=-0.5,xmax=0.5,
mark size=0.05pt
]
\addplot[color=black, line width = 0.5pt, mark = *] table [x=x, y=mu]{case_3_munu.txt};
\addplot[color=blue, line width = 0.5pt, mark = *] table [x=x, y=nu]{case_3_munu.txt};
\end{axis}
\end{tikzpicture}  

\end{tabular}
\end{center}
\caption{Densities of \(\mu\) and \(\nu\), the initial and final measures, with respect to the Lebesgue measure. }
\label{fig:mu_and_nu}
\end{figure}

\paragraph*{Test case 1}
Our first example is a pair of smooth densities bounded from below, which is of the highest regularity among our test cases. We set
\begin{align*}
    \frac{\ddr \mu}{\ddr x}(x)=1+\frac{1}{2} \cos(2\pi w x), \qquad
    \frac{\ddr \nu}{\ddr x}(x)=1,
\end{align*}
with a parameter \(w\in \ZZ \setminus 0\), which we set \(w=1\) (\autoref{fig:mu_and_nu}).
The optimizers and the transport cost are
\begin{equation*}
\frac{\ddr \overline\rho}{\ddr x}(t,x)=\frac{1+\frac{1}{2}\cos(2w\pi T_{t}^{-1}(x))}{1+\frac{t}{2}\cos(2w\pi T_{t}^{-1}(x))}, \quad \overline{v}(t,x)=\frac{1}{4\pi w}\sin(2\pi w T_t^{-1}(x)), \quad \cK(\mu,\nu)=\frac{1}{64\pi^2 w^2}.
\end{equation*}
where the inverse of the time-$t$ optimal transport map $T_t(y)=y+t\sin(2\pi w y)/4\pi w$ can be computed by the Newton-Raphson method, for instance. Note that the expression for $\overline{\rho}$ was obtained through the change of variable formula $\ddr \overline \rho_t/\ddr x (T_t(y),t) = \ddr \mu / \ddr x(y) (T_t'(y))^{-1}$.
The input measures have absolutely continuous densities bounded from below, which is a condition required in the previous work for the quantitative convergence \cite{NataleTodeschi2021finitevolume}. We observed that all the errors are decreasing linearly or faster than the stepsize $h$ as in Figure \ref{fig:error_plots}, thus faster than our upper bound.

\paragraph*{Test case 2}
We next test the transport between triangular densities. We set \(\mu\) and \(\nu\) by,
\begin{align*}
\frac{\ddr \mu}{\ddr x}(x)=\frac{1}{w^2}\max(w-|x|,0), \qquad    \frac{\ddr \nu}{\ddr x}(x)=\frac{1}{4w^2}\max\left(2w-|x|,0\right),
\end{align*}
with a parameter \(0<w<1/4\), which we set \(w=0.2\) (\autoref{fig:mu_and_nu}). The  optimizers and the transport cost are
\begin{multline*}
    \frac{\ddr \overline{\rho}}{\ddr x}(t,x)=\frac{1}{(1+t)^2w^2}\max\left((1+t)w-|x|,0\right), \quad
    \overline{\varphi}(t,x)=\frac{x^2}{2(1+t)}, \\
    \overline{v}(t,x)=\frac{x}{1+t}, \quad
    \cK(\mu,\nu)=\frac{w^2}{12}.
\end{multline*}
The measure \(\overline{\rho}\) is expanded and flattened in time as in \autoref{fig:densities}. Even though $\mu$ and $\nu$ are not supported on the whole space, $\overline{\varphi}$ is of class $ C^{1,1}$ thus both Theorem~\ref{th:convergence_of_cost} and Theorem~\ref{th:convergence_of_optimizer} apply. All 
the errors are empirically decreasing linearly or faster as in \autoref{fig:error_plots}. 

\paragraph*{Test case 3}
Our last example is the breaking of a unimodal density toward another one, where the potential is not of class $C^{1,1}$. We set \(\mu\) and \(\nu\) as characteristic functions,
\begin{align*}
    \frac{\ddr \mu}{\ddr x}(x)=\frac{1}{2w}  \characteristic_{|x|\leqslant w}, \qquad
    \frac{\ddr \nu}{\ddr x}(x)=\frac{1}{2w} \characteristic_{1/2-|x|\leqslant w},
\end{align*}
with a parameter \(0<w<1/2\), which we set \(w=0.05\) (\autoref{fig:mu_and_nu}). The  optimizers and the transport cost are
\begin{equation*}
    \frac{\ddr \overline{\rho}}{\ddr x}(t,x)=\frac{1}{2w}\characteristic_{0\leqslant |x|-ts\leqslant w}, \quad 
    \overline{\varphi}(t,x)=|x|s-\frac{s^2t}{2}, \quad 
    \overline{v}(t,x)=s \operatorname{sign}(x),\quad
    \cK(\mu,\nu)=\frac{s^2}{2}
\end{equation*}
where all the mass travel the same distance \(s=\frac{1}{2}-w\).
Note that the mass travels in two directions as seen in \autoref{fig:densities} and the velocity field \(\overline{v}\) is discontinuous at \(x=0\). 
We are in the situation where Theorem~\ref{th:convergence_of_cost} applies, but the assumptions of Theorem~\ref{th:convergence_of_optimizer} are not satisfied. Though our rates are again not sharp, the convergence of the transport cost seems to be a bit worse than in the first case, between \(\sqrt{h}\) and \(h\) as can be seen in \autoref{fig:error_plots}.

\begin{figure}
\centering    
\hrule 
\vspace{5pt}
Test case 1 \\ \vspace{5pt}

    \begin{minipage}[t]{0.3\textwidth}
        \includegraphics[height=1.8in]{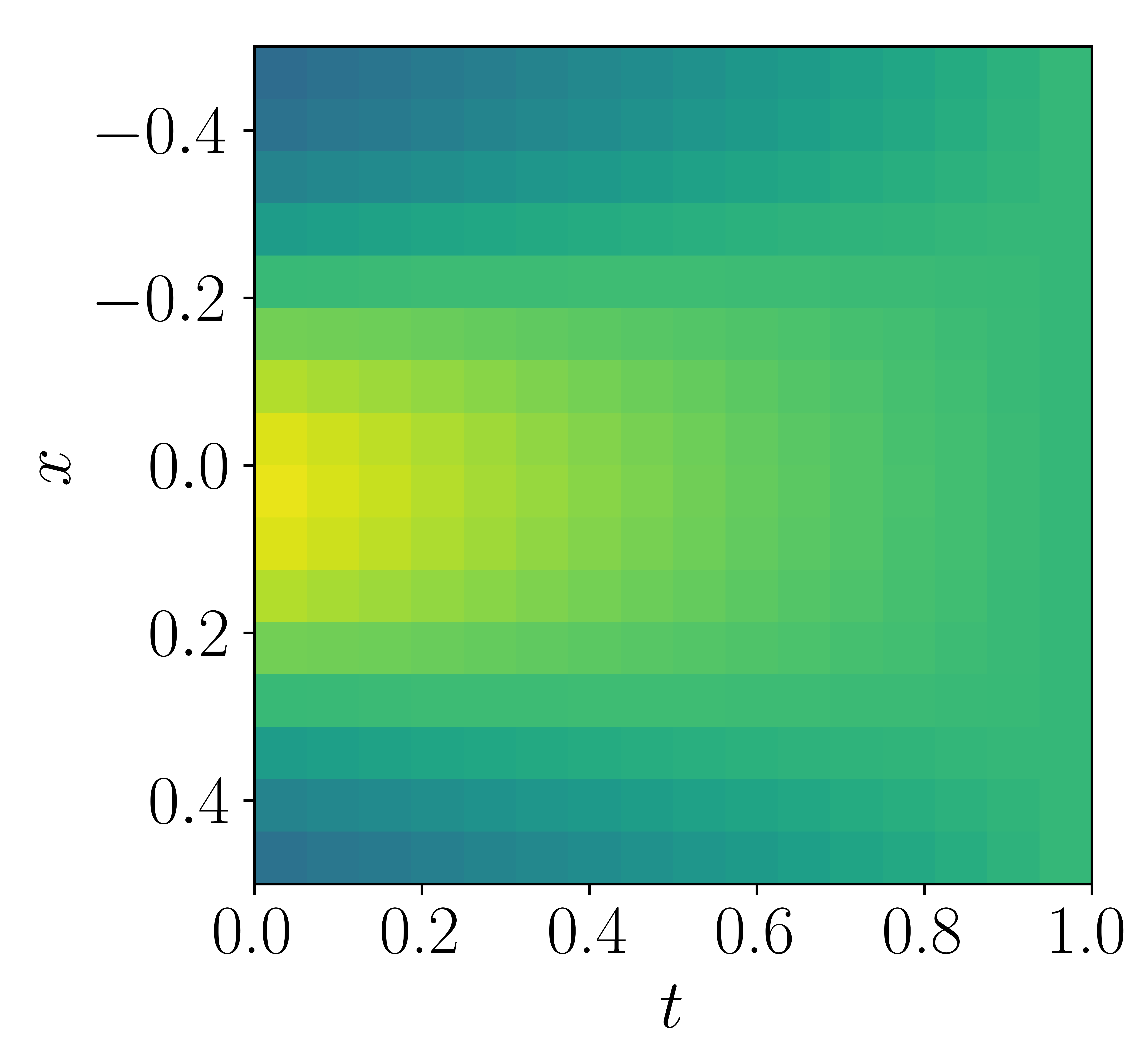}
        \end{minipage}
         \begin{minipage}[t]{0.3\textwidth}
        \includegraphics[height=1.8in,angle=0]{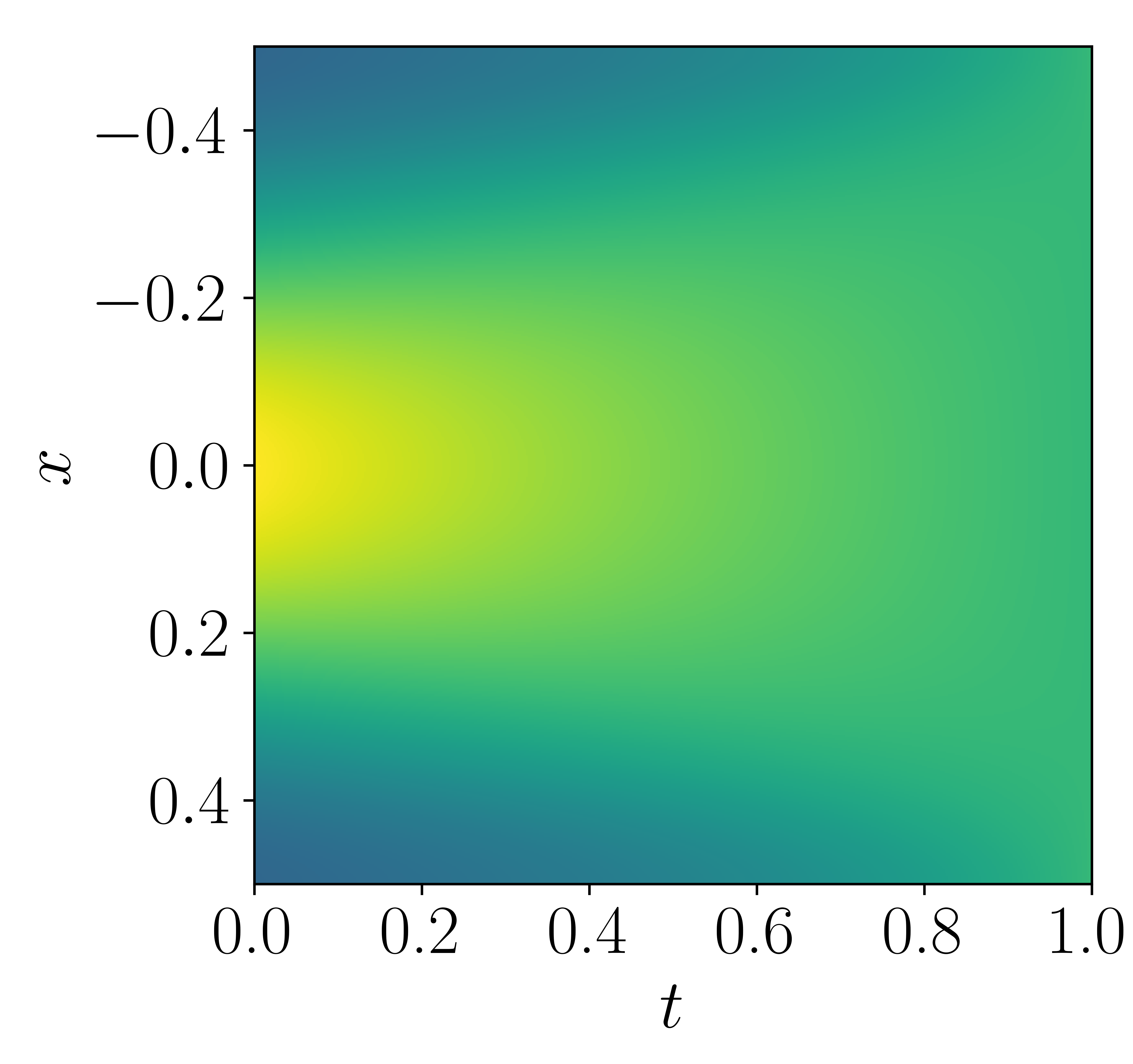}
        \end{minipage}
         \begin{minipage}[t]{0.3\textwidth}
        \includegraphics[height=1.8in,angle=0]{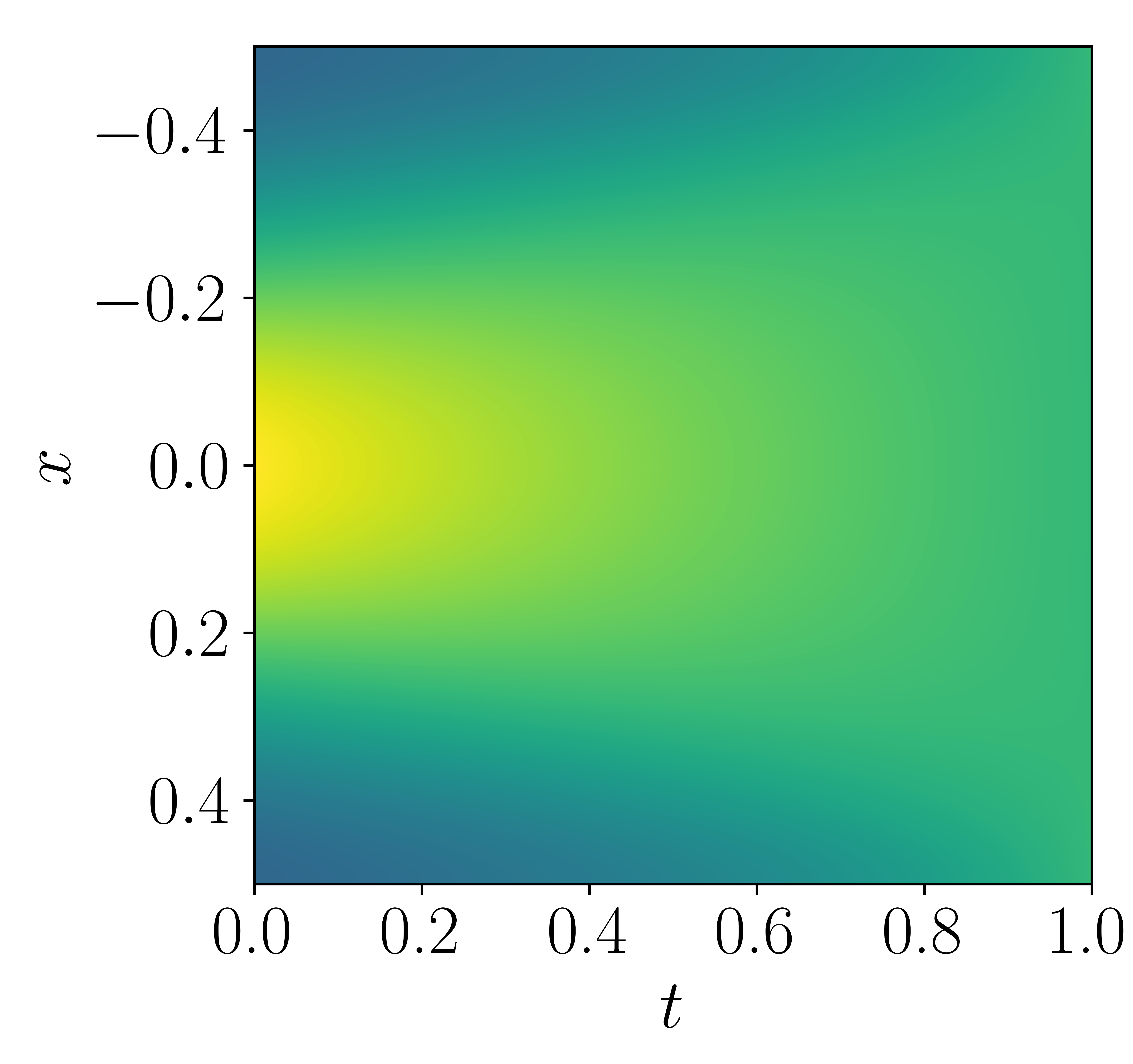}
        \end{minipage}
        \begin{minipage}[t]{0.05\textwidth}
        \includegraphics[height=1.8in,angle=0]{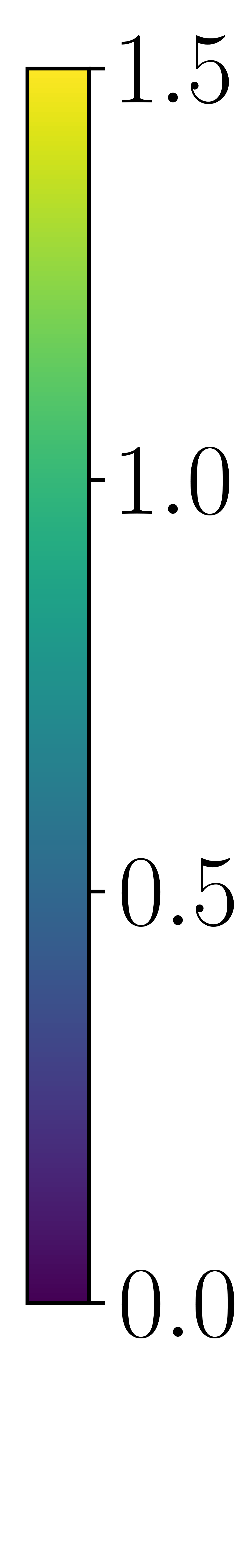}
        \end{minipage}
        
\hrule 
\vspace{5pt}

Test case 2\\ \vspace{5pt}
    
    \begin{minipage}[t]{0.3\textwidth}
        \includegraphics[height=1.8in]{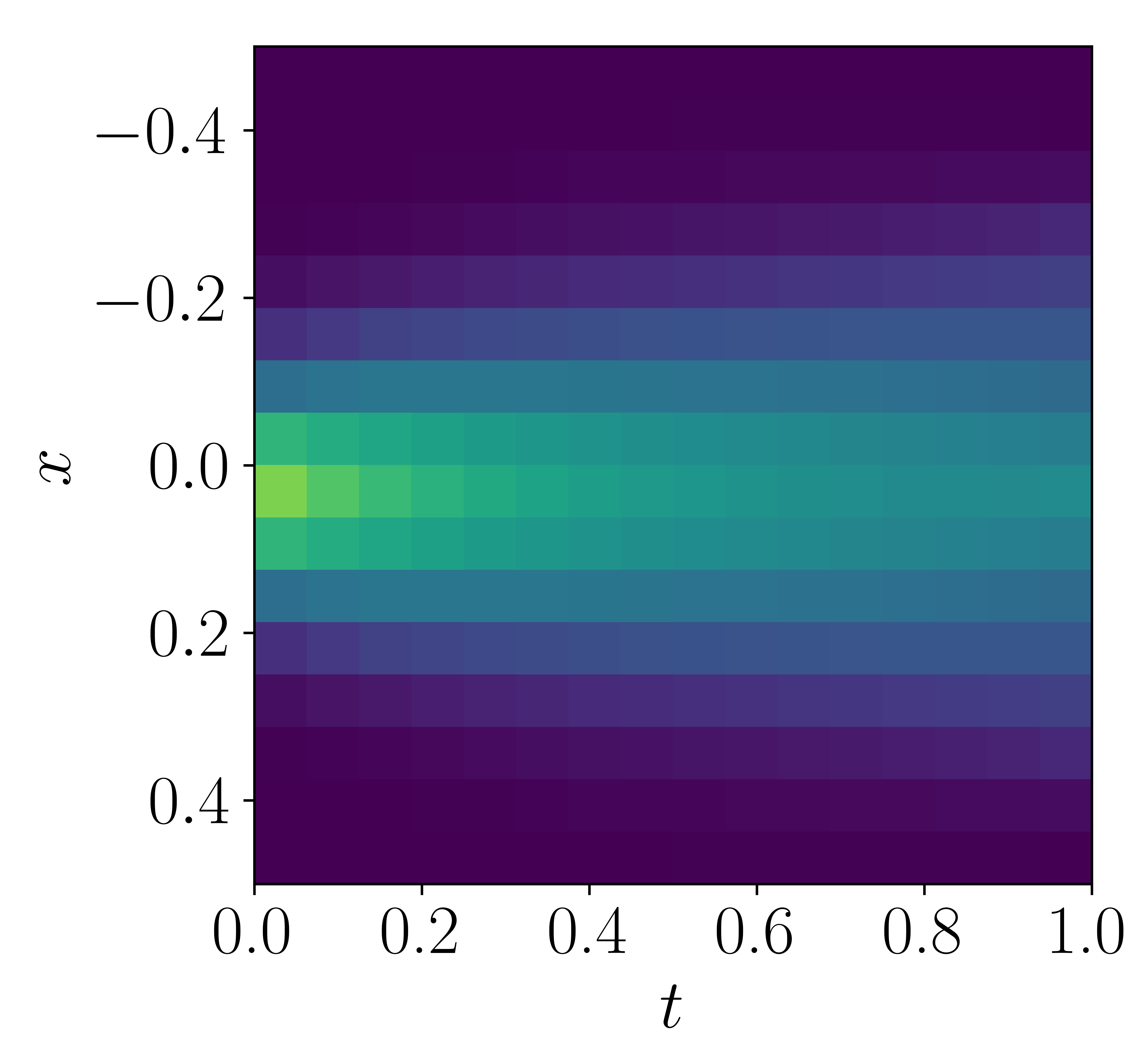}
        \end{minipage}
         \begin{minipage}[t]{0.3\textwidth}
        \includegraphics[height=1.8in,angle=0]{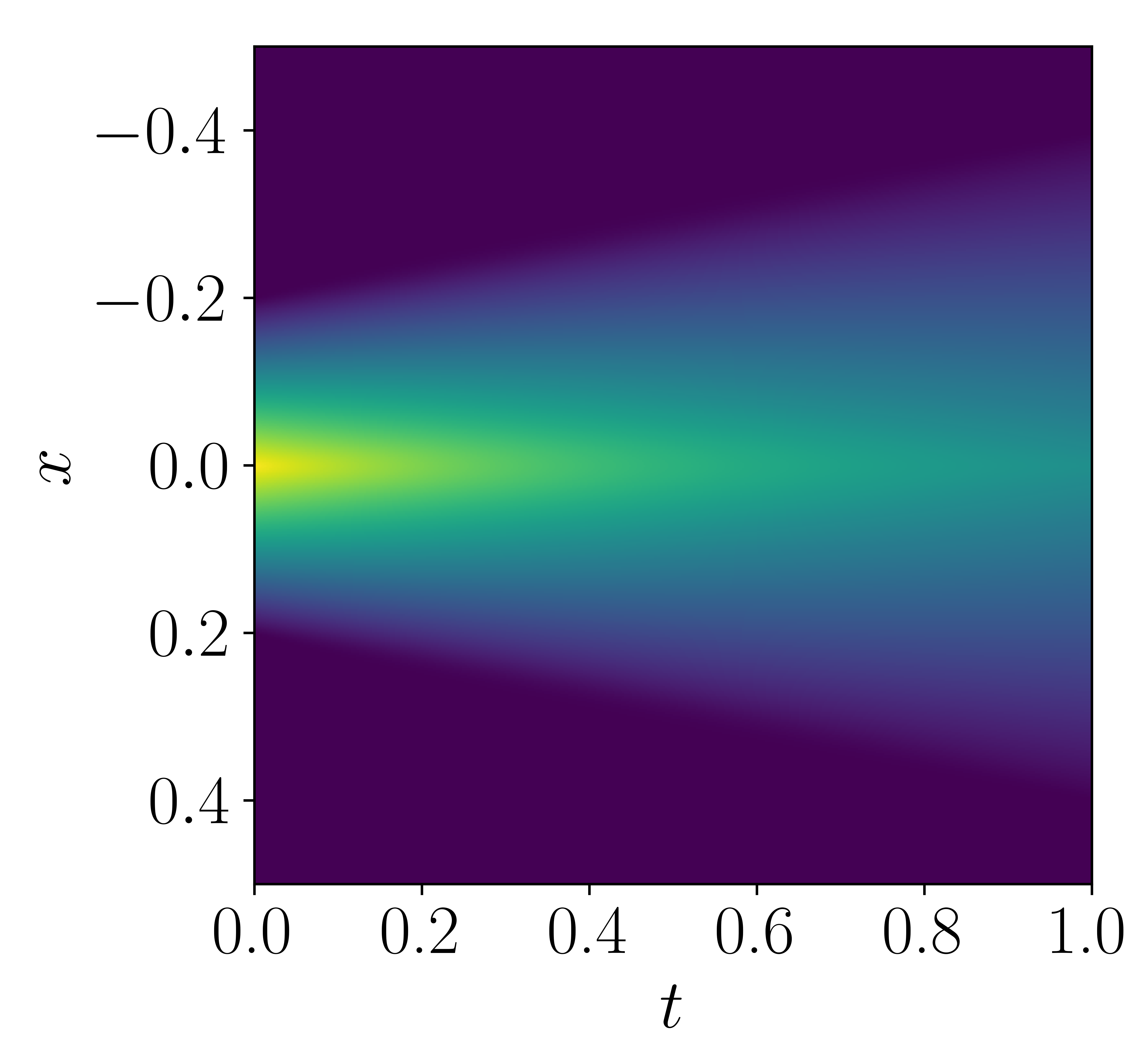}
        \end{minipage}
         \begin{minipage}[t]{0.3\textwidth}
        \includegraphics[height=1.8in,angle=0]{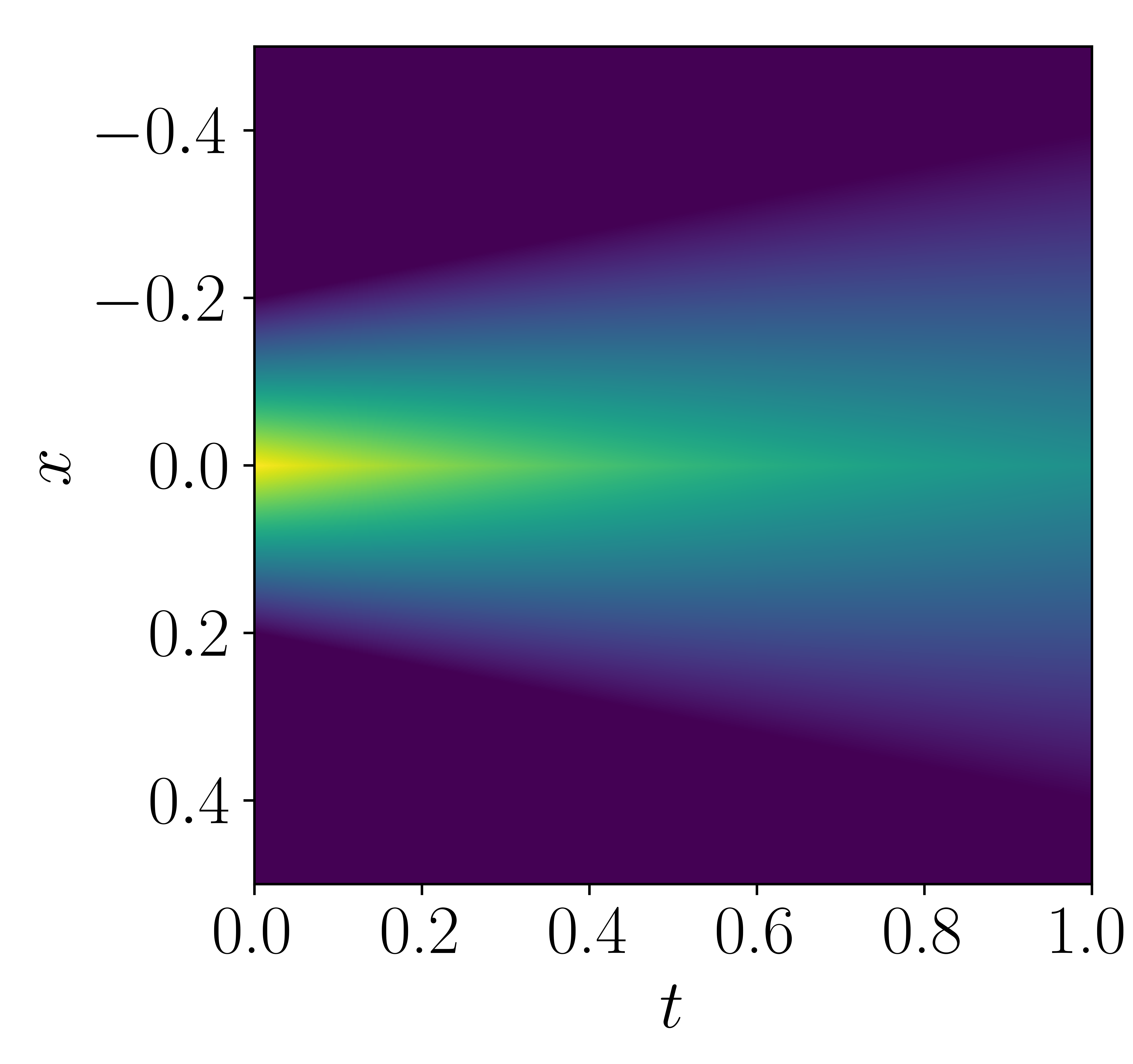}
        \end{minipage}
        \begin{minipage}[t]{0.05\textwidth}
        \includegraphics[height=1.8in,angle=0]{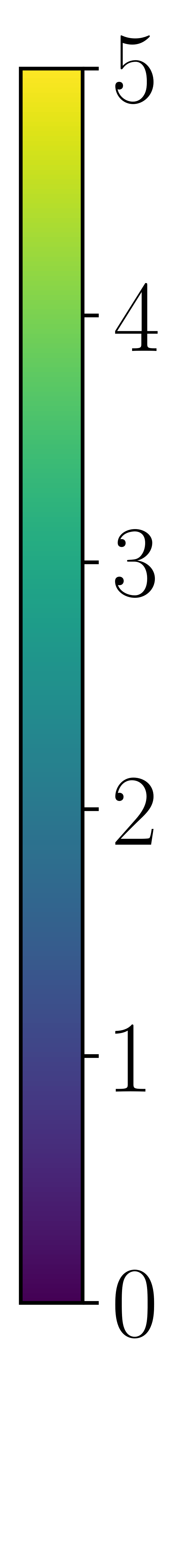}
        \end{minipage}
        
\hrule 
\vspace{5pt}

Test case 3 \\ \vspace{5pt}

    \begin{minipage}[t]{0.3\textwidth}
        \includegraphics[height=1.8in]{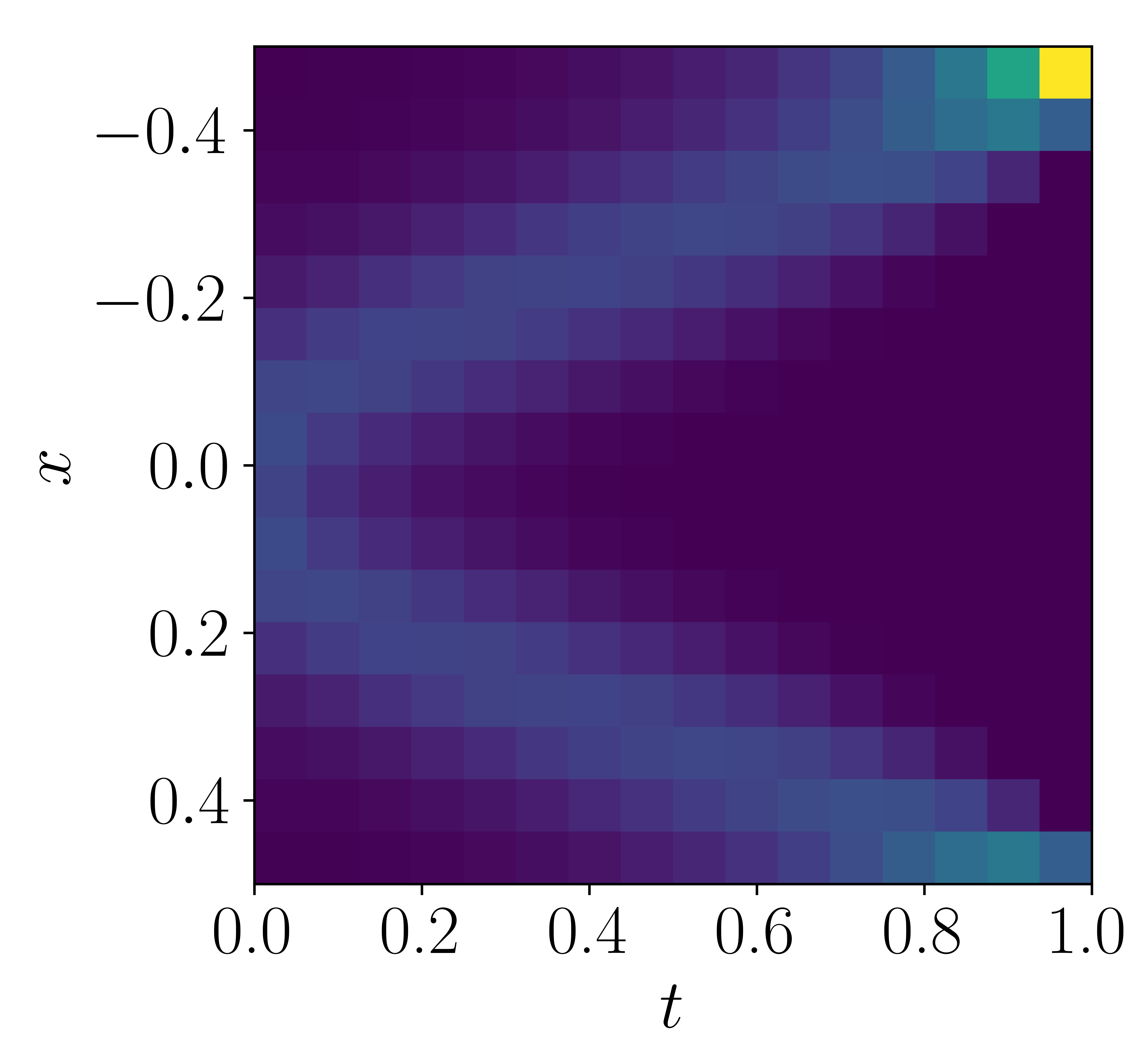}
        \end{minipage}
         \begin{minipage}[t]{0.3\textwidth}
        \includegraphics[height=1.8in,angle=0]{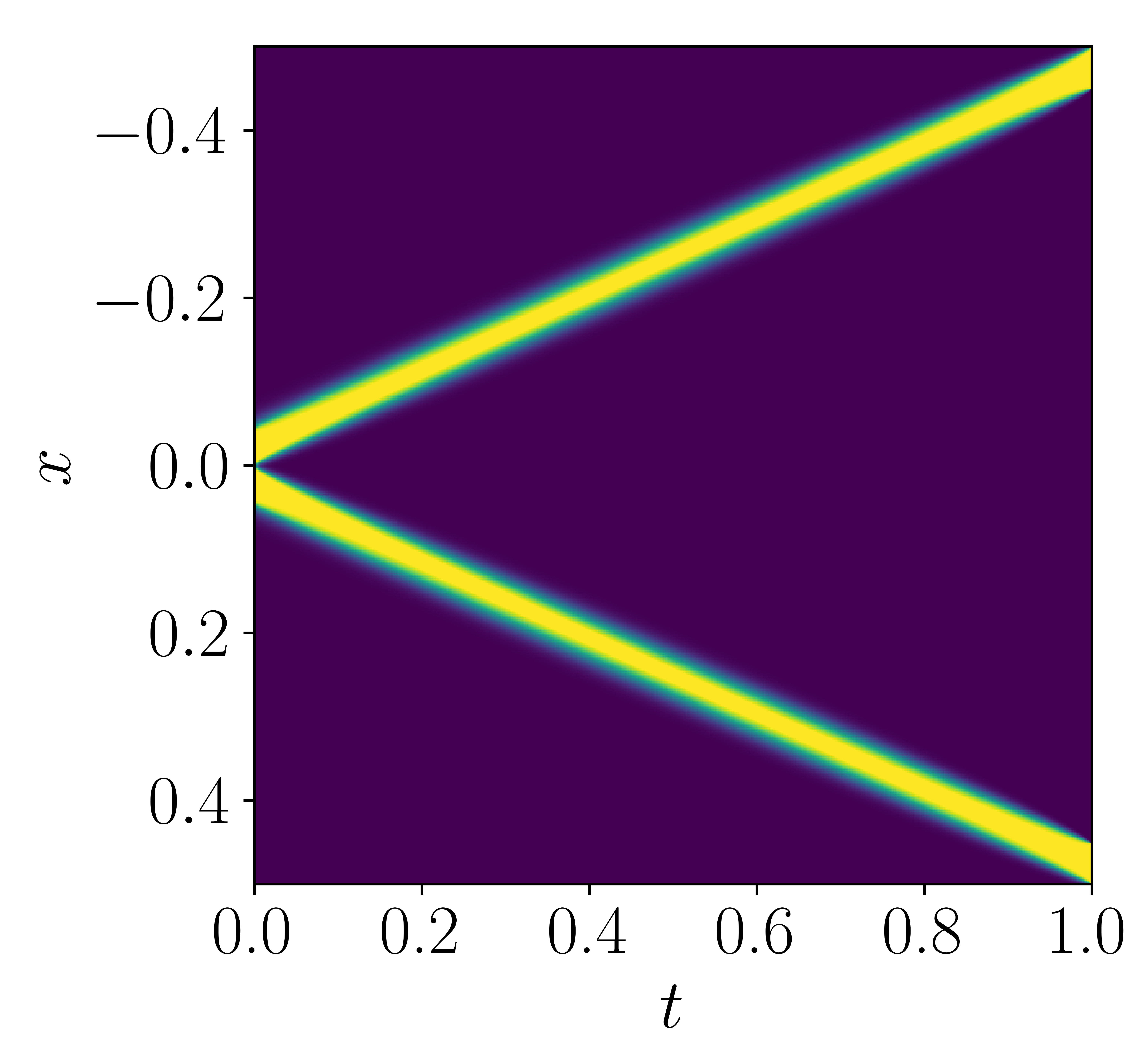}
        \end{minipage}
         \begin{minipage}[t]{0.3\textwidth}
        \includegraphics[height=1.8in,angle=0]{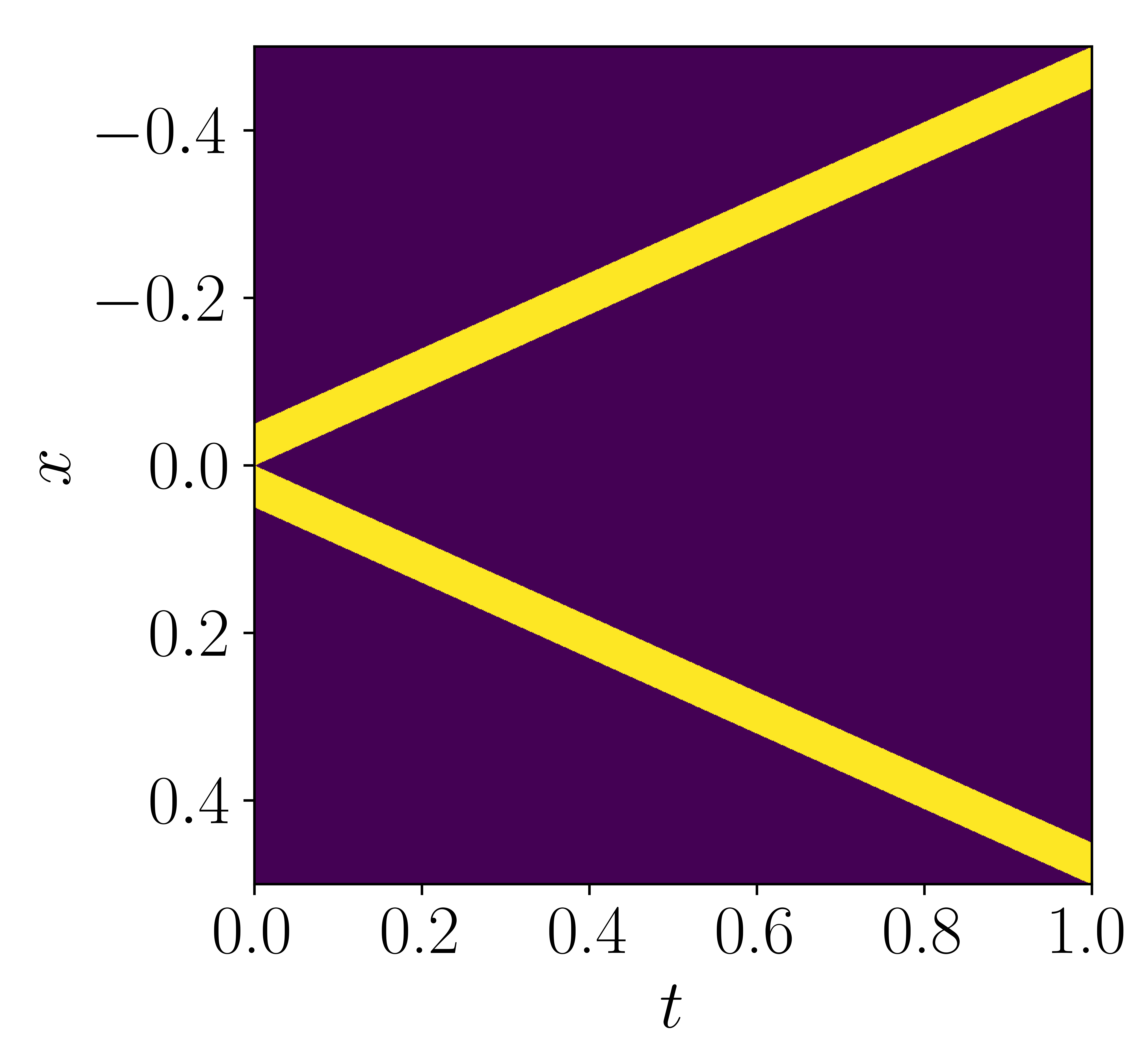}
        \end{minipage}
        \begin{minipage}[t]{0.05\textwidth}
        \includegraphics[height=1.8in,angle=0]{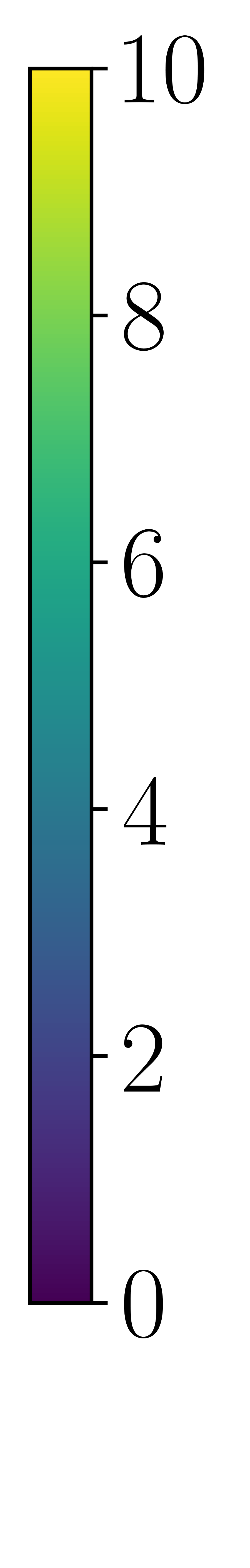}
        \end{minipage}

    \caption{Discrete and continuous optimal measures. From left to right: the discrete optimal  measures \(\overline{\Lambda}_\rho\) for \(h=1/16\) and \(h=1/16\times 2^5\), and the continuous optimal  measure \(\overline{\rho}\).}
    \label{fig:densities}
\end{figure}

\pgfplotsset{
  tick label style = {font=\scriptsize},
  every axis label = {font=\scriptsize},
  legend style = {font=\scriptsize, at={(0.5,-0.1)},anchor=north},
  legend columns=1,
  legend cell align=right,
  label style = {font=\scriptsize},
  legend entries={ $\frac{\ddr \mu}{\ddr x}$,$\frac{\ddr \nu}{\ddr x}$},
xlabel={$h$},
xlabel style={xshift=0pt, yshift=-1pt},
ylabel={error},
title style={yshift=-4pt},
scale only axis=true,
width=0.23\textwidth,
}

\begin{figure}[htbp]
\centering 
\begin{tabular}{ccc}
\hline 
& & \\

 & Test case 1 & \\
\begin{tikzpicture}
\begin{loglogaxis}[
title = {$\varepsilon_{\mathcal{K}}$},
legend entries={PPO $\alpha_\cK=1.998$,HJ  $\alpha_\cK= 1.053$},
mark size=1.pt
]
\addplot[color=black, line width = 0.5pt, mark = *] table [x=h, y=errorValue]{PPO_case_1.txt};
\addplot[color=blue, line width = 0.5pt, mark = *] table [x=h, y=errorValue]{HJ_case_1.txt};
\end{loglogaxis}
\end{tikzpicture}  

& 

\begin{tikzpicture}
\begin{loglogaxis}[
title = {$\varepsilon_v$},
legend entries={PPO $\alpha_v=1.997$,HJ  $\alpha_v=2.027$},
mark size=1.pt
]
\addplot[color=black, line width = 0.5pt, mark = *] table [x=h, y=errorV]{PPO_case_1.txt};
\addplot[color=blue, line width = 0.5pt, mark = *] table [x=h, y=errorV]{HJ_case_1.txt};
\end{loglogaxis}
\end{tikzpicture}  

&

\begin{tikzpicture}
\begin{loglogaxis}[
title = {$\varepsilon_\rho$},
legend entries={PPO $\alpha_\rho=2.254$,HJ  $\alpha_\rho=1.070$},
mark size=1.pt
]
\addplot[color=black, line width = 0.5pt, mark = *] table [x=h, y=errorRho]{PPO_case_1.txt};
\addplot[color=blue, line width = 0.5pt, mark = *] table [x=h, y=errorRho]{HJ_case_1.txt};
\end{loglogaxis}
\end{tikzpicture} 
 \\
 & & \\
\hline 
& & \\
 &  Test case 2 & \\
\begin{tikzpicture}
\begin{loglogaxis}[
title = {$\varepsilon_\cK$},
legend entries={PPO $\alpha_\cK=1.873$,HJ  $\alpha_\cK=1.128$},
         mark size=1.pt
]
\addplot[color=black, line width = 0.5pt, mark = *] table [x=h, y=errorValue]
{PPO_case_2.txt};
\addplot[color=blue, line width = 0.5pt, mark = *] table [x=h, y=errorValue]{HJ_case_2.txt};
\end{loglogaxis}
\end{tikzpicture}  

& 

\begin{tikzpicture}
\begin{loglogaxis}[
title = {$\varepsilon_v$},
legend entries={PPO $\alpha_v=2.015$,HJ  $\alpha_v=1.772$},
mark size=1.pt
]
\addplot[color=black, line width = 0.5pt, mark = *] table [x=h, y=errorV]
{PPO_case_2.txt};
\addplot[color=blue, line width = 0.5pt, mark = *] table [x=h, y=errorV]{HJ_case_2.txt};
\end{loglogaxis}
\end{tikzpicture}  

&

\begin{tikzpicture}
\begin{loglogaxis}[
title = {$\varepsilon_\rho$},
legend entries={PPO $\alpha_\rho=1.228$, HJ  $\alpha_\rho=0.878$},
mark size=1.pt
]
\addplot[color=black, line width = 0.5pt, mark = *] table [x=h, y=errorRho]
{PPO_case_2.txt};
\addplot[color=blue, line width = 0.5pt, mark = *] table [x=h, y=errorRho]{HJ_case_2.txt};
\end{loglogaxis}
\end{tikzpicture}  
 \\
 & & \\
\hline 
& & \\
 & Test case 3 & \\
\begin{tikzpicture}
\begin{loglogaxis}[
title = {$\varepsilon_{\mathcal{K}}$},
legend entries={PPO $\alpha_\cK=1.379$,HJ  $\alpha_\cK=0.887$},
mark size=1.pt
]
\addplot[color=black, line width = 0.5pt, mark = *] table [x=h, y=errorValue]{PPO_case_3.txt};
\addplot[color=blue, line width = 0.5pt, mark = *] table [x=h, y=errorValue]{HJ_case_3.txt};
\end{loglogaxis}
\end{tikzpicture}  

& 

\begin{tikzpicture}
\begin{loglogaxis}[
title = {$\varepsilon_v$},
legend entries={PPO $\alpha_v=1.938$,HJ  $\alpha_v=0.996$},
mark size=1.pt
]
\addplot[color=black, line width = 0.5pt, mark = *] table [x=h, y=errorV]{PPO_case_3.txt};
\addplot[color=blue, line width = 0.5pt, mark = *] table [x=h, y=errorV]{HJ_case_3.txt};
\end{loglogaxis}
\end{tikzpicture}  

&

\begin{tikzpicture}
\begin{loglogaxis}[
title = {$\varepsilon_\rho$},
legend entries={PPO $\alpha_\rho=0.418$,HJ  $\alpha_\rho=0.466$},
mark size=1.pt
]
\addplot[color=black, line width = 0.5pt, mark = *] table [x=h, y=errorRho]{PPO_case_3.txt};
\addplot[color=blue, line width = 0.5pt, mark = *] table [x=h, y=errorRho]{HJ_case_3.txt};
\end{loglogaxis}
\end{tikzpicture}

\end{tabular}

\caption{Plots of the grid resolutions and the errors in the log-log domain. PPO denotes the discretization by \cite{papadakis2014optimal} and HJ denotes our proposed one. The $\alpha$ values are the approximate rates of convergence.
}
\label{fig:error_plots}
\end{figure}
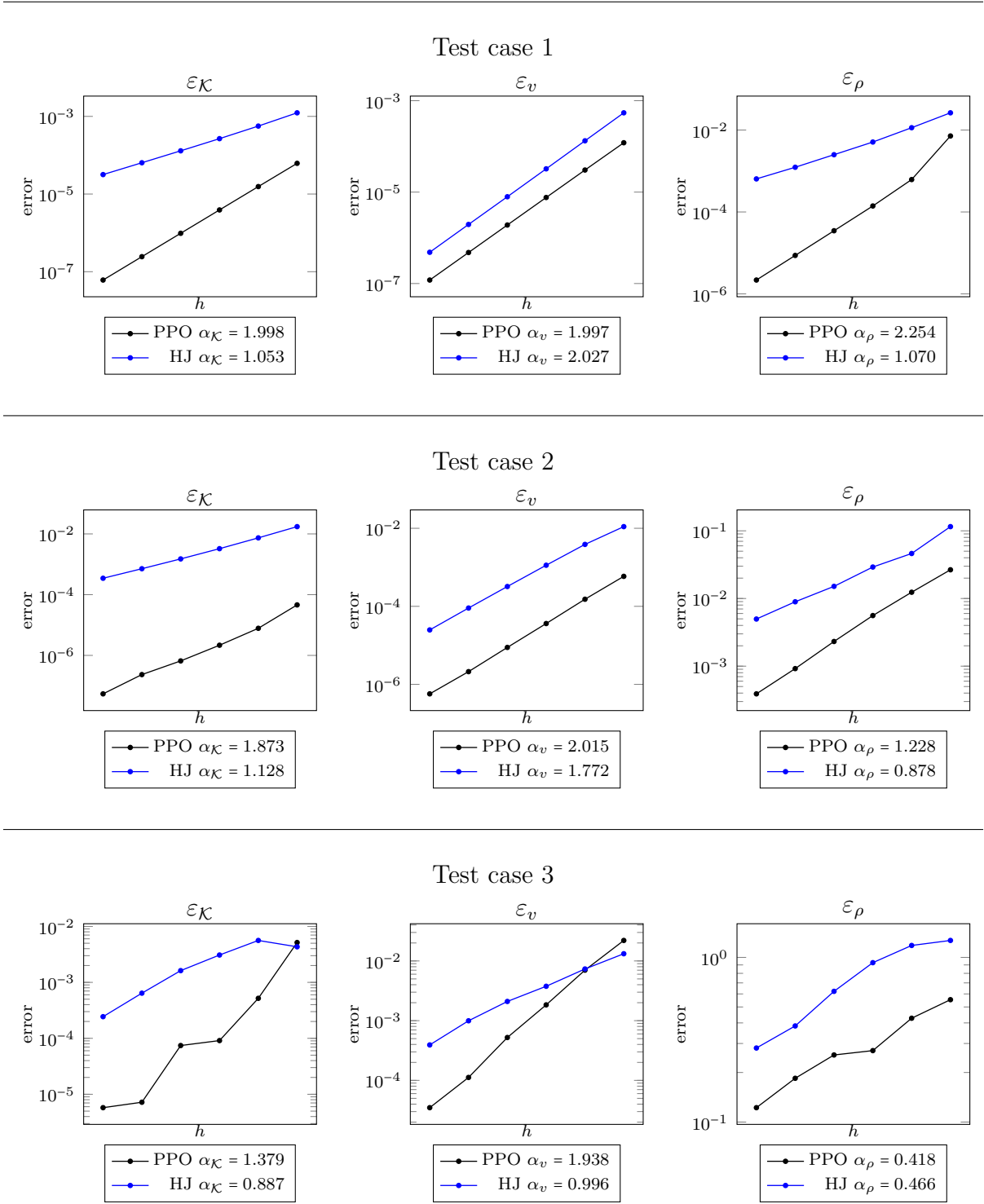

\begin{remark}
As we discussed in Section \ref{sec:discrete_OT_vanishing_viscosity}, the viscosity coefficient \(\varepsilon\) vanishes as the resolution \(h\to 0\). 
In the both examples, we see that for moderately small values of $h$ (equivalently $\varepsilon$) there is a numerical smoothing whose effect becomes weaker as \(h\) decreases as in \autoref{fig:densities}. 
\end{remark}
\begin{remark}
    In both our test cases, we observed that our constraint~\eqref{eq:discrete_initial_Lipschitz}, which here reads \(| \Delta_+ \Phi^0/{\Delta x}| \leqslant R\coloneqq \operatorname{Lip}(L,B_{\operatorname{diam}(\Omega)})=\operatorname{diam}(\Omega)=1/2\), was effective 
    \ie \(\|\overline{\Lambda}_\eta\|_{L^1(\Omega_D)}>0\). That is, the discrete dual optimal transport problem has no regularizing effect on the potential and we really need to encode it as a constraint~\eqref{eq:discrete_initial_Lipschitz}. We can also run numerical computations with a larger $R$ (including $\infty$) for which the constraint may become ineffective. As mentioned in Remark \ref{rmk:largerR}, the scheme may not be monotone when $R$ is too large. In that case, we lose the theoretical guarantee for convergence as the control between a continuous solution and a discrete solution to the Hamilton-Jacobi equation (Theorem \ref{th:CandL_discrete_bound}) is missing. We, however, have observed no cases so far without convergence of the cost or the optimizers while we lose control of their Lipschitz constants which may be quite large though seems independent of the stepsize. Said otherwise, the method behaves well even when the theory breaks, which happens quite often in the numerical study of dynamic optimal transport.   
\end{remark}

\begin{remark}
The finite difference discretization by Papadakis, Peyré, and Oudet~\cite{papadakis2014optimal} better performed in all the examples. We speculate that this is because a) this method uses staggered grids \ie vector values and scalar values are stored in different locations, which more accurately discretizes vector quantities than the collocated grids that our discretization relies on, b) the discretization in~\cite{papadakis2014optimal} is symmetric in both time and space while ours is asymmetric in time as mentioned in Remark \ref{rmk:asymmetry_mu_nu}, and c) ours regularizes the solution of the Hamilton-Jacobi equation with viscosity.

Note, however, that convergence of the cost or the optimizer in~\cite{papadakis2014optimal} is not guaranteed. Indeed, though it would be possible to write the dual problem to their discretization and it looks like a discretization of the Hamilton-Jacobi equation, it is one for which no convergence is guaranteed  due to the lack of properties such as monotonicity.
\end{remark}

\section*{Acknowledgments}

The authors would like to thank Chris Wojtan for his continuous support and several interesting discussions. Part of this research was performed during two visits: one of SI to the BIDSA research center at Bocconi University, and one of HL to the Institute of Science and Technology Austria. Both host institutions are warmly acknowledged for the hospitality. HL is partially supported by the MUR-Prin 2022-202244A7YL ``Gradient Flows and Non-Smooth Geometric Structures with Applications to Optimization and Machine Learning'', funded by the European Union - Next Generation EU. SI is supported in part by ERC Consolidator Grant 101045083 ``CoDiNA'' funded by the European Research Council.

\bibliographystyle{plain}
\bibliography{reference}

\end{document}